\documentclass[12pt]{article}

\usepackage[a4paper, margin=1in, footskip=0.3in]{geometry}

\usepackage{iftex}
\ifPDFTeX
  \usepackage[T1]{fontenc}
\fi
\usepackage{lmodern}
\usepackage{microtype} 

\usepackage{amsmath,amssymb,amsthm,mathtools}

\usepackage{enumitem}
\usepackage{graphicx}
\usepackage{tikz}
\usetikzlibrary{arrows.meta,calc}

\allowdisplaybreaks
\newcommand{\E}{\mathbb E}
\newcommand{\Pbb}{\mathbb P}
\newcommand{\eps}{\varepsilon}
\newcommand{\Core}{\mathbf C}
\newcommand{\Kernel}{\mathcal K}
\newcommand{\Kemeny}{t_{\odot}}
\newcommand{\meet}{\tau_{\rm meet}}
\newcommand{\cons}{T_{\rm cons}}
\newcommand{\coal}{T_{\rm coal}}
\newcommand{\tmix}{t_{\rm mix}}
\newcommand{\Egr}{\mathbb E_{\mathrm{gr}}}
\newcommand{\Vargr}{\operatorname{Var}_{\mathrm{gr}}}
\newcommand{\Reff}{R_{\rm eff}}
\DeclareMathOperator{\dist}{dist}
\DeclareMathOperator{\Var}{Var}

\newcommand{\cE}{\mathcal E}
\newcommand{\Diag}{\mathrm{Diag}}
\newcommand{\one}{\mathbf{1}}
\newcommand{\PGW}{\mathrm{PGW}}
\newcommand{\Poi}{\operatorname{Poisson}}
\newcommand{\TV}{\mathrm{TV}}
\newcommand{\medLarge}{\fontsize{16pt}{16pt}\selectfont}
\newtheorem{theorem}{Theorem}
\newtheorem{corollary}[theorem]{Corollary}
\newtheorem{lemma}[theorem]{Lemma}
\newtheorem{claim}{Claim}[theorem]

\usepackage[hidelinks]{hyperref}
\hypersetup{
  pdftitle={Meeting and coalescence times for random walks in the largest component of the Erd\H{o}s--R\'enyi random graph},
  pdfauthor={Vyacheslav Koval, Yuval Peres, Pieter Trapman},
  pdfsubject={Meeting times, coalescing random walks, and voter consensus on Erd\H{o}s--R\'enyi random graphs},
  pdfkeywords={Erd\H{o}s-R\'enyi random graph, meeting time, coalescing random walks, voter model, random walks on graphs, effective resistance}
}

\title{\medLarge\textbf{Meeting and coalescence times for random walks in the largest component of the Erd\H{o}s--R\'enyi random graph}}

\author{
  \textrm{Vyacheslav Koval}\thanks{Bernoulli Institute, University of Groningen, The Netherlands. \texttt{v.v.koval@rug.nl}}
  \and
  \textrm{Yuval Peres}\thanks{Beijing Institute of Mathematical Sciences and Applications (BIMSA), China. \texttt{yperes@gmail.com}}
  \and
  \textrm{Pieter Trapman}\thanks{Bernoulli Institute, University of Groningen, The Netherlands. \texttt{j.p.trapman@rug.nl}}
}
\date{\small July 11, 2026}

\begin{document}

\maketitle

\begin{abstract}
We prove that the stationary and worst-case expected meeting times of two
independent continuous-time random walks on the largest component of the Erd\H{o}s--R\'enyi random graph $G(n,p)$ have order $n$
throughout the strictly supercritical, the slightly supercritical and the critical regimes. Using these bounds along with a fine-tuned combination of comparison inequalities due to Oliveira (2012) and Kanade--Mallmann-Trenn--Sauerwald (KMS, 2023), we deduce that
expected coalescence time and full voter-model consensus also have order $n$ throughout these three regimes.
\vspace{0.5em}

\bigskip

\noindent \textbf{Mathematics Subject Classification (2020):} Primary 05C81, 60J27; Secondary 05C80, 60K35.

\medskip

\noindent \textbf{Keywords:} Erd\H{o}s--R\'enyi random graph, meeting time, random walks on graphs, coalescing random walks, voter model.
\end{abstract}

\section{Main results}\label{sec:main-results}

Let $G(n,p)$ be the Erd\H{o}s--R\'enyi graph on $[n]$, and write
$\mathcal C_1$ for its largest connected component. On any finite connected
component $G$, let $(X_t)_{t\ge0}$ and $(Y_t)_{t\ge0}$ be independent
continuous-time simple random walks with total jump rate $1$ and stationary law
$\pi_G$; when the component is clear, write $\pi=\pi_G$. We write
\[
        \meet=\inf\{t\ge0:X_t=Y_t\},
        \qquad
        t_{\rm meet}(G)=\max_{x,y\in V(G)}\E^G_{x,y}\meet .
\]
The expectation $\E^G_{\pi\otimes\pi}$ means that the two walks are started
independently from $\pi_G$. All random-walk expectations below are quenched,
that is, conditional on the sampled component. All limit statements in the
sequel are as $n\to\infty$; in particular, an event holds \emph{with high
probability} if its probability tends to one in this limit. A constant is called
\emph{absolute} if it is independent of $n$ and of the supercritical parameter,
including $\eps$ and the fixed value of $\lambda$.

\begin{theorem}[Main meeting-time bounds]\label{thm:main-results}
There are absolute constants $0<c<C<\infty$ such that the first two assertions
below hold; these constants do not depend on $\eps$ or on the fixed value of
$\lambda$. In all three parts, $n$ is the number of vertices of the original
Erd\H{o}s--R\'enyi graph.
\begin{enumerate}[label=\textup{(\roman*)}]
\item Let $\eps=\eps(n)\to0$ with $\eps^3n\to\infty$, and let $\mathcal C_1$ denote the
largest component of $G(n,(1+\eps)/n)$. Then, with high probability,
\[
        c n
        \le \E^{\mathcal C_1}_{\pi\otimes\pi}\meet
        \le t_{\rm meet}(\mathcal C_1)
        \le C n .
\]

\item Fix $\lambda>1$, and let $\mathcal C_1$ denote the largest component of
$G(n,\lambda/n)$. Then, with high probability,
\[
        c n
        \le \E^{\mathcal C_1}_{\pi\otimes\pi}\meet
        \le t_{\rm meet}(\mathcal C_1)
        \le C n .
\]

\item Fix $A_0\ge0$ and suppose that
$|n^{1/3}(np_n-1)|\le A_0$. Let $\mathcal C_1$ denote the largest component of
$G(n,p_n)$. Then, for every $\eta>0$, there are constants
$0<c_{A_0,\eta}<C_{A_0,\eta}<\infty$, depending only on $A_0$ and $\eta$, such
that, for all sufficiently large $n$,
\[
        \Pbb\left(
        c_{A_0,\eta} n
        \le \E^{\mathcal C_1}_{\pi\otimes\pi}\meet
        \le t_{\rm meet}(\mathcal C_1)
        \le C_{A_0,\eta} n
        \right)\ge1-\eta .
\]
\end{enumerate}
\end{theorem}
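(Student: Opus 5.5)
We will prove the upper bound on $t_{\rm meet}(\mathcal C_1)$ and the lower bound on $\E_{\pi\otimes\pi}\meet$ by separate arguments. The middle inequality in each part is trivial, since the stationary start is an average over starting pairs.

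\emph{Lower bound.} We use the standard first/second-moment argument. Let $Z=\int_0^{T}\one\{X_t=Y_t\}\,dt$ with both walks started from $\pi\otimes\pi$. Then
\[
\E Z=T\sum_x\pi(x)^2 .
\]
Conditional on meeting, the expected time spent together in the remaining window is at most $\sup_x\int_0^{T}P_x(X'_s=Y'_s)\,ds$ for two walks started at the same vertex. Hence
\[
\Pbb(\meet\le T)\le \frac{T\sum_x\pi(x)^2}{\inf_{x}\int_0^T \Pbb_{x,x}(X_s=Y_s)\,ds}.
\]
The denominator is at least a constant, namely the holding time before either walk jumps. So it suffices to show that $\sum_x\pi(x)^2=\sum_x d_x^2/(2|E|)^2\le C/n$ w.h.p. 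In the strictly supercritical case $|E(\mathcal C_1)|\asymp n$ and the empirical second moment of degrees is $O(1)$. In the slightly supercritical and critical cases, $\mathcal C_1$ is a kernel-plus-trees structure with $|\mathcal C_1|\asymp \eps n$ (respectively $n^{2/3}$), and degrees are bounded in $L^2$ on average. There we get $\sum_x\pi(x)^2\asymp 1/|\mathcal C_1|\gg 1/n$, so this argument gives only order $|\mathcal C_1|$, not $n$. This shows that the lower bound in (i) and (iii) must use the tree-like geometry.

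The better route is the identity
\[
\E_{\pi\otimes\pi}\meet \;\ge\; c\,\Kemeny\text{-type quantities}.
\]
Concretely, I would use the lower bound $\E_{\pi\otimes\pi}\meet\gtrsim \Reff$-scale commute times between typical points for the product chain, via $\E_{\pi\otimes\pi}\meet=\sum\pi\pi\,\E\tau_{\rm diag}$ and Green-function bounds. On $\mathcal C_1$ with $\eps\to0$, typical distance is $\asymp\eps^{-1}\log(\eps^3n)$, the resistance is of order $1/\eps$, and $|E|\asymp\eps n$. Their product gives commute time of order $n$. For the lower bound I will show that w.h.p. a positive $\pi$-fraction of pairs $(x,y)$ satisfy $\Reff(x,y)\ge c/\eps$, for example because $x,y$ hang on attached trees of depth $\gtrsim 1/\eps$, or because they sit at kernel-distance of order $1/\eps$ in the 2-core. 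Then, for the product chain, escaping to the diagonal from such a pair requires time of order $|E|\Reff\asymp n$ with constant probability. In the critical case $\eps$ is replaced by $n^{-1/3}$, and all of these estimates hold with probability $1-\eta$ by the scaling limit.

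\emph{Upper bound.} I will use the general bound $t_{\rm meet}\le C\,(t_{\rm hit})$, with $t_{\rm hit}=\max_{x,y}\E_x\tau_y$ (Aldous--Fill, valid for reversible chains, via the hitting time of a fixed vertex by the product chain). By the commute-time identity,
\[
t_{\rm hit}\le 2|E(\mathcal C_1)|\max_{x,y}\Reff(x,y)\le 2|E|\cdot{\rm diam}_{\Reff}.
\]
So it suffices to show $\max_{x,y}\Reff(x,y)\le C n/|E(\mathcal C_1)|$, which is of order $1$, $1/\eps$ and $n^{1/3}$ in the three regimes. For the 2-core I will use the kernel description (Ding--Kim--Lubetzky--Peres): the kernel is a random multigraph of bounded degree that is an expander, and its edges are subdivided into paths of length about $\text{Geom}(\eps)$. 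Resistance across the core is then $O(1/\eps)$ times the kernel resistance $O(1)$, plus the longest subdivided path. \textbf{This is the main obstacle}: the longest path has length $\asymp\eps^{-1}\log(\eps^3 n)$, and the attached Poisson trees have depth up to $\eps^{-1}\log(\eps^3n)$, so the worst-case resistance exceeds $1/\eps$ by a log factor. A naive hitting-time bound therefore loses a logarithm.

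To avoid this loss, I would bound $t_{\rm meet}$ more carefully. Walks started deep in a tree or a long path reach the core bulk in expected time $O(|E|\cdot(\text{local resistance}))$ only through the local piece, because the volume of a depth-$h$ tree is about $h^2\ll n$. Hence $\E_x\tau_{\rm bulk}\le C n$ uniformly, where the bulk is a set $B$ of $\pi$-measure at least $1/2$. From $B$, I would use $t_{\rm meet}\le C(\max_x\E_x\tau_B+\E_{\pi\otimes\pi}\meet+t_{\rm mix})$. Here $t_{\rm mix}\asymp\eps^{-3}\log^2(\eps^3n)=o(n)$. The stationary meeting time is bounded using $\E_{\pi\otimes\pi}\meet\le C/(\text{spectral quantity})$, or via Oliveira's bound $\E_{\pi\otimes\pi}\meet\lesssim t_{\rm hit}$ restricted to typical targets, where the resistances are $O(1/\eps)$. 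The strictly supercritical case (ii) is the same argument with $\eps$ fixed, and (iii) follows from the critical scaling limit, which gives tightness of $n^{-1/3}\max\Reff$ and $n^{-2/3}|E(\mathcal C_1)|$.
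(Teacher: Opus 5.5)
\textbf{The genuine gap is the lower bound.} You rely on the principle that if a positive $\pi\otimes\pi$-fraction of pairs have $\Reff(x,y)\ge c/\eps$, then reaching the diagonal from those pairs takes time of order $|E|\Reff(x,y)$. This is false in general. On a star with $m$ leaves, two leaves are at resistance $2$, $|E|=m$ and $\Kemeny\asymp m$, yet two walks meet at the centre in time $O(1)$. One-walk pairwise resistance does not control hitting of the diagonal; the two-walk Dirichlet form does. The tree-depth mechanism does not help at scale $n$ either, because a walk leaves its attached tree in time $o(n)$.

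The set-to-set version is what the paper uses in the critical window. It takes two sets of stationary mass $\ge\alpha$ with $\Reff(A,B)\ge R$ and tests with $1-|f(x)-f(y)|$ for the equilibrium potential $f$; there the Nachmias--Peres lane-poor levels give $R\asymp n^{1/3}$. It provably cannot give order $n$ in the supercritical regimes. The Poincar\'e inequality gives $\alpha^2\le\Var_\pi f\le t_{\rm rel}/(2|E|\Reff(A,B))$, so $|E|\Reff(A,B)\le t_{\rm rel}/(2\alpha^2)=O(\eps^{-3}\log^2(\eps^3n))=o(n)$.

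The missing idea is a \emph{local} pair test function fed into $\E_\pi\tau_B\ge(1-\E_\pi F)^2/\cE(F,F)$. The paper takes $F(x,y)=\max\{1-\dist_{\Core}(r(x),r(y))/k,0\}$ with $k=\lfloor\eps^{-1}\rfloor$. This $F$ equals $1$ on the diagonal, is unchanged by moves inside attached trees, and changes by at most $k^{-1}$ across a core edge. Making it work needs two estimates absent from your plan:
\begin{itemize}
\item the near-pair mass bound $\sum_{z,w}\nu(z)\nu(w)\one_{\{\dist_{\Core}(z,w)\le k\}}=O(1/(\eps^3n))$;
\item the weighted ball bound $\sum_w\nu(w)B_E(w,k+2)=O(k)$, proved by dominating weighted kernel-ball explorations with an age-dependent branching process.
\end{itemize}
Together these give $\E_{\pi\otimes\pi}F=o(1)$ and $\cE(F,F)=O(1/(|E|k))=O(1/n)$.

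\textbf{Uniformity in $\lambda$.} In (ii) the constants must be uniform in $\lambda$. Your second-moment bound only gives $\E_{\pi\otimes\pi}\meet\gtrsim(\sum_x\pi(x)^2)^{-1}\asymp(\lambda-1)n$ as $\lambda\downarrow1$. It is fine for $\lambda\ge\lambda_0$, where it is essentially the paper's diagonal test. On $(1,\lambda_0]$ you must rerun the core-projection argument with $\eps$ replaced by $1-\mu_\lambda$.

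\textbf{Upper bound.} Your diagnosis of the logarithmic loss in $t_{\rm hit}$ is correct, but the repair is left vague. The bulk-hitting term is unnecessary, since $t_{\rm meet}\le C(\tmix+\E_{\pi\otimes\pi}\meet)$ holds directly by a total-variation restart argument. The usable form of ``hitting times restricted to typical targets'' is Aldous's comparison $t_{\rm meet}\le K(\tmix+\Kemeny)$ with $\Kemeny=|E|\sum_{u,v}\pi(u)\pi(v)\Reff(u,v)$.

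What you must then prove is the averaged estimate $\sum_{u,v}\pi(u)\pi(v)\Reff(u,v)=O(\eps^{-1})$. Your justification, ``$O(1/\eps)$ times the kernel resistance'', fails pointwise: some kernel vertices have all incident stretches of length at least $c\,\eps^{-1}\log(\eps^3n)$. The paper obtains the bound only on average, using:
\begin{itemize}
\item Thomson's principle with the unweighted kernel unit current $\theta^{a,b}$, followed by Jensen's inequality;
\item the bound $\sum_{a,b}(\theta^{a,b}_e)^2\le 2N/\gamma_0$ coming from the kernel spectral gap;
\item conditional-variance control of the random tree masses $W_z$, with the parameter tracked explicitly to get uniformity in $\lambda$.
\end{itemize}

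Your critical-window upper bound, $t_{\rm meet}\lesssim t_{\rm hit}\le 2|E|\max\Reff$ combined with the diameter and volume bounds, is sound.
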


The theorem leads, after applying and extending comparison results of Oliveira
\cite{oliveira-coalescence} and Kanade--Mallmann-Trenn--Sauerwald (KMS)
\cite{kms}, to the following corollary.

\begin{corollary}[Voter consensus]\label{cor:voter}
Consider the continuous-time voter model on the component specified below,
where each vertex updates at rate $1$ by copying a uniformly chosen neighbor,
and suppose that the initial opinions are all distinct. Let $\cons$ be the
consensus time. There are absolute constants $0<c<C_{\rm sup}<\infty$ such that
the first two assertions below hold.
\begin{enumerate}[label=\textup{(\roman*)}]
\item In the fixed-supercritical case, let $H_\lambda$ be the giant component of
$G(n,\lambda/n)$ for a fixed $\lambda>1$. Then, with high probability,
\[
        c n \le \E^{H_\lambda}\cons \le C_{\rm sup} n.
\]

\item In the slightly supercritical case, let $H_{\rm ER}$ be the giant component
of $G(n,(1+\eps)/n)$, where $\eps\to0$ and $\eps^3n\to\infty$. Then, with high
probability,
\[
        c n \le \E^{H_{\rm ER}}\cons \le C_{\rm sup} n.
\]

\item In the critical window, fix $A_0\ge0$ and suppose that
$|n^{1/3}(np_n-1)|\le A_0$. Let $\mathcal C_1$ be the largest component of $G(n,p_n)$.
For every $\eta>0$ there are constants
$0<c_{A_0,\eta}<C_{A_0,\eta}<\infty$, depending only on $A_0$ and $\eta$, such
that, for all sufficiently large $n$,
\[
        \Pbb\left(
        c_{A_0,\eta} n
        \le \E^{\mathcal C_1}\cons
        \le C_{A_0,\eta} n
        \right)\ge1-\eta .
\]
\end{enumerate}
In all three cases, the same displayed bounds hold for the full coalescence
time of coalescing random walks started with one particle at every vertex.
\end{corollary}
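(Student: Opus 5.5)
By voter-model/coalescing-walk duality, $\cons$ with all initial opinions distinct has the same law as the full coalescence time $\coal$ of coalescing walks started from every vertex. I will therefore prove the two-sided bounds for $\E\coal$; this also gives the final sentence of the corollary.

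For the lower bound I use $\E\coal\ge\E_{\pi\otimes\pi}\meet$ up to an absolute constant factor. Two particles started at independent $\pi$-distributed vertices $u,v$ must have merged by time $\coal$. Their coalescence time is exactly the meeting time of two independent walks from $u,v$. Averaging over $u,v$ gives $\E\coal\ge \sum_{u,v}\pi(u)\pi(v)\E_{u,v}\meet=\E_{\pi\otimes\pi}\meet$. One caveat: the rate convention in the voter model has each \emph{vertex} update at rate $1$, so dual particles jump at rate $1$, matching the walks in Theorem~\ref{thm:main-results}. Theorem~\ref{thm:main-results} then yields $\E\coal\ge cn$ w.h.p. in regimes (i),(ii), and with probability $\ge1-\eta$ in the critical window.

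For the upper bound I will invoke Oliveira's inequality $\E\coal\le C_0\, t_{\rm meet}(G)$ with $C_0$ universal. Combined with $t_{\rm meet}(\mathcal C_1)\le Cn$ from Theorem~\ref{thm:main-results}, this gives $\E\coal\le C_0Cn$. The one thing to check is that Oliveira's theorem holds for arbitrary finite connected graphs with an absolute constant. His original statement is for reversible chains with the continuous-time normalization, and it may carry a mixing-time term in some versions. The KMS refinement $\E\coal\lesssim t_{\rm meet}+$ lower-order terms, such as $\sqrt{t_{\rm meet}\,\tmix}\log$ factors, is the fallback. In that case I would need $\tmix=o(n)$ on $\mathcal C_1$. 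Known estimates give $\tmix\asymp \eps^{-3}\log^2(\eps^3 n)$ in the slightly supercritical regime, $\log^2 n$ for fixed $\lambda$, and $n$ at criticality. These are $o(n)$ except in the critical window.

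The main obstacle is this critical case, where $\tmix\asymp n$ and a KMS-type additive error is no longer negligible. There I would use Oliveira's clean bound, which has no mixing term. If an additive term is unavoidable, it is still $O_{A_0,\eta}(n)$ with probability $\ge1-\eta$, because the relevant mixing and hitting times on the critical component are $O_P(n)$. An $O(n)$ additive error only changes the constant $C_{A_0,\eta}$, so the bound survives. For uniformity in $\eps$ and $\lambda$ in parts (i),(ii), I must make sure every constant entering is absolute. This holds if only Oliveira's universal constant and the absolute $C$ of Theorem~\ref{thm:main-results} appear, so I would take $C_{\rm sup}=C_0C$.
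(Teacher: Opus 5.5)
Your lower bound is correct and is essentially the paper's: the $u$- and $v$-lineages move as independent rate-one walks until they meet, so $\E\coal\ge\E_{\pi\otimes\pi}\meet$. The upper bound, however, has a genuine gap.

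\textbf{No universal meeting-time bound.} There is no universal inequality $\E\coal\le C_0\,t_{\rm meet}(G)$. Oliveira's universal theorem bounds $\E\coal$ by the maximal hitting time $t_{\rm hit}=\max_{x,y}\E_x\tau_y$, not by $t_{\rm meet}$. The KMS bound $\E\coal\lesssim t_{\rm meet}\bigl(1+\sqrt{\tmix/t_{\rm meet}}\,\log N\bigr)$ carries a correction factor that KMS show cannot be dropped in general. Oliveira's hitting-time bound alone also fails in the supercritical regimes, because $t_{\rm hit}$ is not $O(n)$ there. It is of order $n\log n$ for fixed $\lambda$, because of dangling paths of logarithmic length. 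In the slightly supercritical case the available bound is $n\log(\eps^3n)$.

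\textbf{The fallback and part (ii).} So everything rests on your KMS fallback, and you misstate its hypothesis. The correction is \emph{multiplicative}, so you need $\tmix\log^2N=O(n)$, not merely $\tmix=o(n)$. For fixed $\lambda$ this holds, since $\tmix\log^2N=O(\log^4n)$. So part (i) survives, and this is exactly the paper's argument there. In part (ii), write $a=\eps^3n$ and note $N\asymp\eps n\ge n^{2/3}$. With $\tmix$ of order $\eps^{-3}\log^2a$, the KMS factor is of order $a^{-1/2}\log a\,\log n$. This diverges, for instance, whenever $a\le\log^2n$. Hence direct KMS does not give $O(n)$ throughout the slightly supercritical regime.

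The missing idea is the paper's two-stage argument. First use Oliveira's partial-coalescence bound $\E C_k^{\rm coal}\lesssim t_{\rm hit}/k+\tmix$ to reach $k\asymp\alpha\log\alpha$ particles. Here $\alpha=t_{\rm meet}/\tmix\gtrsim a/\log^2a$. Also $t_{\rm hit}\lesssim n\log a$, from the DKLP diameter bound $\eps^{-1}\log a$ and the commute identity. This stage costs $O(n\log^2a/a)=o(n)$. Then apply the KMS small-particle lemma: at most $c\,\alpha\log\alpha$ particles coalesce in expected time $O(t_{\rm meet})=O(n)$.

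\textbf{The critical window.} Here the same misreading hurts. With $\tmix\asymp t_{\rm meet}\asymp n$, the KMS factor is of order $\log N\asymp\log n$, so KMS yields only $O(n\log n)$. This is not an ``additive $O(n)$ error'' that can be absorbed into $C_{A_0,\eta}$.

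The correct tool is Oliveira's actual theorem $\E\coal\le C\,t_{\rm hit}$, combined with the commute identity: $t_{\rm hit}\le 2|E(\mathcal C_1)|\max_{x,y}\Reff(x,y)\le 2|E(\mathcal C_1)|\operatorname{diam}(\mathcal C_1)$. This is $O_{A_0,\eta}(n)$ on the Nachmias--Peres event where $|E(\mathcal C_1)|=O(n^{2/3})$ and $\operatorname{diam}(\mathcal C_1)=O(n^{1/3})$. Your remark that the relevant hitting times are $O_{\Pbb}(n)$ points in this direction. However, you neither invoke the correct comparison nor prove that bound.
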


The theorem concerns two walks; the corollary passes from meeting estimates to
coalescence and voter consensus. In the slightly supercritical regime, applying
Lemma~\ref{lem:ct-kms} directly from time zero would introduce an unnecessary
logarithmic loss. We avoid it by first using Oliveira's bound for the time to
reach a prescribed number of clusters \cite[Theorem~1.2]{oliveira-coalescence},
together with the diameter estimate for the slightly supercritical giant and
commute times, and only then using a small-particle form of the KMS comparison
\cite{kms}. In the critical window we use Oliveira's universal comparison between
coalescence and maximal hitting time \cite[Theorem~1.1]{oliveira-coalescence},
combined with the critical-window volume and resistance bounds.

\section{Background}

Let two independent continuous-time random walks run on the same finite
connected graph. Their \emph{meeting time} is the first time at which the two
walks occupy one vertex. Start instead with one walk at every vertex, and merge
particles when they meet. The time until only one particle remains is the
\emph{coalescence time}. The classical duality between the voter model and
coalescing random walks identifies this time with voter-model consensus when all
initial opinions are distinct \cite{liggett}.

We determine the scale of these times on the large components of the
Erd\H{o}s--R\'enyi graph $G(n,p)$ as $p$ crosses the transition point $p=1/n$.
There are three regimes. The \emph{critical window} is
$p=(1+O(n^{-1/3}))/n$. The \emph{slightly supercritical regime} is
$p=(1+\eps)/n$, where $\eps\to0$ but $\eps^3n\to\infty$. The \emph{fixed
supercritical regime} is $p=\lambda/n$ for fixed $\lambda>1$. In each case the
meeting-time scale is the ambient scale $n$: the vertex count of the original
random graph, not the typically smaller size of the component under study. In
the two supercritical meeting-time bounds the comparison constants are absolute;
they do not depend on $n$, $\eps$, or the fixed value of $\lambda$.

Meeting, coalescence, and consensus have a substantial history. Cox
\cite{cox} studied coalescing random walks and voter consensus on tori.
Oliveira \cite{oliveira-coalescence} proved a universal comparison between
coalescence and maximal hitting time, and Oliveira \cite{oliveira} gave
mean-field conditions under which coalescence is controlled by a stationary
two-walk meeting time. Kanade, Mallmann-Trenn and Sauerwald \cite{kms} related
meeting, mixing, and coalescence on general finite graphs. Cooper, Frieze and
Radzik \cite{cooper-frieze-radzik} studied multiple random walks on random
regular graphs; Avena et al. \cite{avena-capannoli-hazra-quattropani} treated
analogous questions on random directed graphs. There is also a large one-walk
literature on Erd\H{o}s--R\'enyi graphs: see, for example, Cooper and Frieze
\cite{cooper-frieze-cover} for the cover time of the fixed supercritical giant,
Barlow, Ding, Nachmias and Peres \cite{bdnp-cover} for the evolution of cover
time through criticality, and Ding, Lee and Peres
\cite{ding-lee-peres-cover} for the resistance-Gaussian description of cover
times. Meeting time is a genuinely two-walk quantity. In the supercritical
regimes considered here it is far smaller than cover time.

The upper bound starts from an electrical identity. Give every edge unit
conductance and let
\[
        \pi(x)=\frac{d(x)}{2|E(G)|}
\]
be the stationary distribution of the continuous-time walk with total jump rate
$1$. If $\tau_v$ is the hitting time of $v$ and $\E_\pi$ denotes a walk started
from $\pi$, the \emph{random target time}, or Kemeny's constant, is
\[
        \Kemeny(G):=\sum_v\pi(v)\E_\pi\tau_v.
\]
The commute identity gives the one-walk quantity the resistance form
\begin{equation}\label{eq:Kemenyfirst}
        \Kemeny(G)
        =|E(G)|\sum_{x,y}\pi(x)\pi(y)\Reff(x,y).
\end{equation}
Here $\Reff(x,y)$ is the effective electrical resistance between $x$ and $y$.
Equivalently, $\Reff(x,y)^{-1}$ is the minimum network energy of a function
that is $0$ at $x$ and $1$ at $y$. We use Thomson's principle and the
commute-time identity with this normalization; see Levin and Peres
\cite[Chapter~9]{lpw} and Lyons and Peres \cite[Chapters~2 and~9]{lyons-peres}.
Lemma~\ref{lem:ct-commute} checks the continuous-time constants.

For the giant of the slightly supercritical graph, the Ding--Kim--Lubetzky--Peres decomposition
\cite[Theorem~2]{dklp} gives three layers. The \emph{kernel} is a
configuration-model multigraph with minimum degree three. Replacing each kernel
edge by a geometrically distributed path produces the \emph{2-core}.
Independent subcritical Poisson Galton--Watson trees are then attached to the
core vertices. In this model the degree-weighted average resistance has order
$\eps^{-1}$, while the giant has $\Theta(\eps n)$ edges and vertices. These two
factors cancel in Kemeny's formula \eqref{eq:Kemenyfirst}, leaving an $O(n)$
random-target bound. Keeping the parameter dependence in the same calculation
gives the fixed-supercritical estimate. Aldous's comparison
\cite[Proposition~2]{aldous-meeting}, in the form stated in
Lemma~\ref{lem:aldous-meeting}, then turns the random-target and mixing bounds
into a worst-case meeting-time bound.

The lower bound is variational. We build a low-energy function on pairs of
vertices. It equals one once the two walks have met, but has small stationary
mean. The inequality in Lemma~\ref{lem:stationary-variational} then forces a
large expected meeting time of the walks. Near criticality, the function records
whether the two vertices project to nearby roots in the core $\Core$. Away
from criticality, the diagonal indicator already suffices. Section~\ref{sec:roadmap}
spells out this proof strategy.

In the critical window there is no linear-size giant, so component-volume
estimates are most naturally stated at fixed probability level. We use the
critical-window volume, diameter, mixing, and Nash--Williams resistance estimates
of Nachmias and Peres
\cite[Theorems~1.1 and~1.2, Lemma~5.4, Propositions~5.5--5.7]{nachmias-peres-critical}.
Those estimates give the $O(n)$ upper bound. They also give two sets of positive
stationary mass separated by resistance of order $n^{1/3}$, which yields the
matching lower bound.

We write DKLP for Ding, Kim, Lubetzky, and Peres \cite{dklp}. The other main
inputs are the strictly supercritical contiguous decomposition
\cite[Theorem~1]{dlp-strict}, the configuration-model expansion estimate
\cite[Lemma~5.3]{bkw}, the supercritical mixing estimates
\cite[Theorem~1 and Section~4.2]{dlp} and \cite[Theorem~1.1]{bkw}, the
slightly supercritical giant diameter estimate of Ding--Kim--Lubetzky--Peres
\cite{dklp-diameter}, and the critical-window estimates cited above. Here
\emph{contiguous} is used in the direction needed for the proof: any graph
property that holds with high probability in the auxiliary model holds with high
probability for the corresponding Erd\H{o}s--R\'enyi component. The weighted
resistance estimates, core-ball estimates, random-target bounds, and
variational lower bounds are proved below. The random-target lemma is stated
early because it drives the meeting-time upper bound; its proof is deferred
until Section~\ref{sec:remaining-proofs}, after the DKLP estimates have been
established.

\section{Notation, conventions, and auxiliary model}

We now fix the detailed conventions used in the proofs. The Erd\H{o}s--R\'enyi
graph $G(n,p)$ has vertex set $[n]$ and contains each unordered pair
independently with probability $p$. We write $\mathcal C_1$ for its largest
connected component; above the critical window this is the giant component. An
event holds \emph{with high probability} if its probability tends to one as
$n\to\infty$.

A constant is \emph{absolute} if it is independent of $n$ and of every
supercritical parameter, including $\eps$ and the fixed value of $\lambda$. Thus
the constants in the two supercritical parts of Theorem~\ref{thm:main-results}
are uniform over fixed $\lambda>1$. The failure probability in the phrase
``with high probability'' may still depend on that fixed value of $\lambda$.
Likewise, in fixed-supercritical statements, the threshold from which $n$ is
large enough may depend on the fixed value of $\lambda$, but constants denoted
$c,C,C_{\rm K},C_*,C_{\rm sup}$ and similar absolute constants do not.
Dependence on other displayed parameters is written explicitly, as in
$C_{A_0,\eta}$. The symbol $\one_A$ denotes the indicator of an event or set
$A$.

For a finite connected multigraph $G$, write $|E(G)|$ for its number of
edges. We view $G$ as a unit-conductance network. Degrees count incident half-edges; a self-loop
contributes two. At an update time the walk chooses one incident half-edge
uniformly. If the chosen half-edge is a loop, the walk remains at its current
vertex. In the generator this contributes no difference $f(y)-f(x)$, and in the
network it has no effect on effective resistance. Multiple non-loop edges are
parallel unit resistors, and edge sums count multiplicity.

Let $(X_t)_{t\ge0}$ be the continuous-time simple random walk on $G$ with total jump rate
$1$ at every vertex. Its generator is
\begin{equation}\label{eq:generator}
        \mathcal L_G f(x)=\frac1{d(x)}\sum_{y\sim x}\bigl(f(y)-f(x)\bigr),
\end{equation}
where $d(x)$ denotes degree in the multigraph convention above. Its stationary
measure is
\begin{equation}\label{eq:pi}
        \pi_G(x)=\frac{d(x)}{2|E(G)|}.
\end{equation}
When the graph is clear from context we write $\mathcal L$ and $\pi$. Run two
independent copies $(X_t)_{t\ge0}$ and $(Y_t)_{t\ge0}$, and define
\begin{equation}\label{eq:meet}
        \meet:=\inf\{t\ge0:X_t=Y_t\}.
\end{equation}
For a vertex $v$, write $\tau_v:=\inf\{t\ge0:X_t=v\}$ for the hitting time of
$v$ by one walk. The worst-case expected meeting time is
\begin{equation}\label{eq:graph-tmeet}
        t_{\rm meet}(G):=\max_{x,y\in V(G)}\E^G_{x,y}\meet .
\end{equation}

All random-walk probabilities and expectations are quenched. Once a graph $G$
has been sampled, $\Pbb^G_{x,y}$ and $\E^G_{x,y}$ denote probability and
expectation for two walks on this fixed graph. Similarly, $\E^G_\pi$ denotes
expectation for one walk started from stationarity, and
$\E^G_{\pi\otimes\pi}$ denotes expectation for two independent stationary walks.
When the graph is clear from context, $\Pbb_x$, $\E_x$, $\Pbb_\pi$ and $\E_\pi$
have the corresponding one-walk meanings. The same quenched notation $\E^G$ is
used below for the voter model and for coalescing random walks on the fixed
graph $G$. The unadorned symbols $\Pbb$ and $\Egr$ refer to graph-construction
probability and expectation.

For vertices $x,y$ in a connected multigraph $G$, let $\Reff^G(x,y)$ be the
effective resistance; we omit the superscript when the graph is clear. For
nonempty disjoint sets $A,B\subseteq V(G)$, $\Reff^G(A,B)$ denotes the effective
resistance between the two terminals obtained by identifying all vertices in
$A$ and all vertices in $B$. Equivalently, $\Reff^G(A,B)^{-1}$ is the minimum of
the unnormalized network energy
\[
        \mathcal D_G(f):=\sum_{xy\in E(G)}(f(x)-f(y))^2
\]
over functions that are $0$ on $A$ and $1$ on $B$, with non-loop edges counted
with multiplicity and loops contributing zero. The one-walk Dirichlet form for
the jump-rate-$1$ walk is the normalized form
$(2|E(G)|)^{-1}\mathcal D_G(f)$.

For a finite connected multigraph $G$, let $\Delta_G$ denote the symmetric
unnormalized graph Laplacian. We write $\Delta_G^{\dagger}$ for its
Moore--Penrose inverse, equivalently the inverse on the subspace orthogonal to
constants and zero on the constants \cite{penrose}.

Let $P^t=e^{t\mathcal L_G}$ be the transition semigroup of the continuous-time
walk. Our total-variation mixing time convention is
\begin{equation}\label{eq:tmix-convention}
        \tmix(G):=\inf\left\{t\ge0:\max_x\|P^t(x,\cdot)-\pi_G\|_{\TV}\le\frac14\right\},
\end{equation}
for the jump-rate-$1$ continuous-time walk unless stated otherwise. Changing the
threshold changes the estimates only by absolute multiplicative constants.

We next specify the slightly supercritical auxiliary model used in the proof.
Let $\eps=\eps(n)\to0$ and assume
\[
        a_n:=\eps^3 n\to\infty.
\]
This is the slightly supercritical regime, just outside the critical window. The
giant component has appeared, but the DKLP core-and-tree structure remains
visible at the scale used here. The condition $a_n\to\infty$ is also the range
in which the DKLP contiguous model \cite{dklp} and the Ding--Lubetzky--Peres
\cite{dlp} mixing estimate used below are available.

Let $H_{\rm ER}$ be the giant component of $G(n,(1+\eps)/n)$. We first prove the
main estimate for the DKLP contiguous model, then transfer it to $H_{\rm ER}$
by contiguity. Let $\mu\in(0,1)$ be the unique conjugate parameter associated
with $1+\eps$, namely the unique solution of
\begin{equation}\label{eq:mu-conjugate}
        \mu e^{-\mu}=(1+\eps)e^{-(1+\eps)}.
\end{equation}
Then $1-\mu\asymp\eps$. Here and below, $A_n\asymp B_n$ means that
$A_n/B_n$ is bounded above and below by positive constants in the regime under
consideration. We write $X_n=O_{\Pbb}(b_n)$ when $X_n/b_n$ is tight: for every
$\eta>0$ there are $K<\infty$ and $n_0$ such that
$\sup_{n\ge n_0}\Pbb(|X_n|>Kb_n)<\eta$.

By the contiguity theorem of Ding--Kim--Lubetzky--Peres
\cite[Theorem~2]{dklp}, the giant component of $G(n,(1+\eps)/n)$ is contiguous
to the following labelled model. This model is a concrete surrogate for the
giant: every isomorphism-invariant graph property that holds with high
probability in the model holds with high probability for the actual
Erd\H{o}s--R\'enyi giant.

Recall that the $2$-core of a graph is its maximal subgraph of minimum degree at
least two. Suppressing every maximal path whose internal vertices have degree
two produces the kernel. The DKLP construction reverses this procedure: sample
the kernel, restore the suppressed paths, and attach trees to the resulting
core.

First sample the DKLP parameter
\[
        \Lambda\sim\mathcal N\!\left(1+\eps-\mu,\frac1{\eps n}\right),
\]
as in \cite[Theorem~2]{dklp}. Since $1+\eps-\mu\asymp\eps$, a Gaussian tail
bound gives
\[
        \Pbb(\Lambda\le0)\le \exp(-c\eps^3n)=o(1).
\]
On this exceptional event, define the output of the auxiliary construction to
be a fixed graph consisting of two vertices joined by one edge and stop. This
completion changes the model only on an $o(1)$ event and therefore does not
affect any high-probability statement or the required direction of contiguity.
On $\{\Lambda>0\}$ continue with the construction below. We use only
\[
        \Lambda=(1+\eps-\mu)+O_{\Pbb}((\eps n)^{-1/2})
\]
and the degree moment bounds recorded in Lemma~\ref{lem:kernel-inputs}.
Conditional on $\Lambda$, take independent
$D_1,\ldots,D_n\sim\Poi(\Lambda)$, condition on
$\sum_i D_i\one_{\{D_i\ge3\}}$ being even, and put
$N_k=|\{i:D_i=k\}|$ for $k\ge3$. The kernel $\Kernel$ is the
configuration-model multigraph with $N_k$ vertices of degree $k$. Concretely,
one attaches $D_i$ half-edges to vertex $i$ and pairs all half-edges uniformly
at random; a pair at one vertex gives a self-loop, while repeated pairs give
parallel edges.

Second, replace each kernel edge by an independent path of length $L$, where
\[
        \Pbb(L=\ell)=(1-\mu)\mu^{\ell-1},\qquad \ell\ge1.
\]
This gives the core $\Core$, i.e. the $2$-core in the DKLP
construction. We use the term core $\Core$ throughout after this point. A kernel self-loop of stretch length $L\ge2$ is interpreted as a
cycle of length $L$ attached at its kernel vertex; a length-one self-loop
remains a self-loop under the multigraph conventions above.

Third, attach independent Poisson Galton--Watson trees with offspring mean
$\mu$, denoted $\PGW(\mu)$ trees, to the vertices of $\Core$. Each vertex of an
attached tree has an independent $\Poi(\mu)$ number of children.
Figure~\ref{fig:dklp} summarizes the construction and fixes the notation used
below. Sections~\ref{sec:input} and~\ref{sec:proof} work on a graph $H$ sampled
from this DKLP model; Section~\ref{sec:remaining-proofs} transfers the result
to the actual giant.

With high probability,
\begin{equation}\label{eq:sizes-intro}
        |V(H)|=\Theta(\eps n),\qquad
        |V(\Kernel)|=\Theta(\eps^3n),\qquad
        |V(\Core)|=\Theta(\eps^2n),\qquad
        |E(H)|=\Theta(\eps n).
\end{equation}

The contiguity arguments take place on the countable measurable space
$\mathfrak G$ of isomorphism classes of all finite multigraphs, including
empty and disconnected multigraphs, with no bound on their number of
vertices. For every nonempty connected element we use the walk convention
fixed above. This avoids discarding the negligible but possible events that
the auxiliary construction has more than $n$ vertices or is not connected.
A simple Erd\H{o}s--R\'enyi component is embedded in $\mathfrak G$ as a
connected multigraph without loops or multiple edges. The failure events used
for contiguity transfer are therefore genuine isomorphism-invariant graph
properties. Length-one self-loops
only create self-transitions and do not affect the Dirichlet or resistance
estimates. Stretched self-loop cycles in the fixed-supercritical model are
handled in the proof of Lemma~\ref{lem:kemeny}; their stationary contribution is
negligible.

Unless stated otherwise, constants in $O(\cdot)$ and $\Theta(\cdot)$ are
absolute in the slightly supercritical regime. The notation $O_r(\cdot)$ permits dependence
only on the displayed exponent $r$. We use $O_\lambda(\cdot)$,
$\Theta_\lambda(\cdot)$, and $o_\lambda(1)$ only for auxiliary estimates whose
constants may depend on the fixed parameter. This auxiliary notation does not
apply to the constants $c,C$ in Theorem~\ref{thm:main-results}. The notation
$O_{\Pbb,\lambda}(1)$ denotes a tight family with a bound allowed to depend on
that fixed $\lambda$.

\begin{figure}[!htbp]
\centering
\resizebox{0.68\textwidth}{!}{%
\begin{tikzpicture}[x=1cm,y=1cm,>=Latex,
    K/.style={circle,fill=black,inner sep=3.0pt},
    C/.style={circle,draw=black,fill=white,line width=0.45pt,inner sep=2.0pt},
    T/.style={circle,fill=black!38,inner sep=1.5pt},
    coreedge/.style={draw=black,line width=0.65pt},
    treeedge/.style={draw=black!42,line width=0.45pt},
    ann/.style={font=\small,fill=white,fill opacity=0.96,text opacity=1,inner sep=2pt,rounded corners=1pt}]

\node[K] (k1) at (0.0,0.0) {};
\node[K] (k2) at (1.8,1.25) {};
\node[K] (k3) at (4.1,1.55) {};
\node[K] (k4) at (7.0,0.95) {};
\node[K] (k5) at (7.9,-0.75) {};
\node[K] (k6) at (5.2,-1.10) {};
\node[K] (k7) at (2.55,-0.70) {};

\node[C] (c12a) at (0.55,0.35) {}; \node[C] (c12b) at (1.08,0.72) {}; \node[C] (c12c) at (1.45,1.02) {};
\draw[coreedge] (k1)--(c12a)--(c12b)--(c12c)--(k2);

\node[C] (c23a) at (2.35,1.35) {}; \node[C] (c23b) at (2.95,1.47) {}; \node[C] (c23c) at (3.55,1.55) {};
\draw[coreedge] (k2)--(c23a)--(c23b)--(c23c)--(k3);

\node[C] (c34a) at (4.75,1.47) {}; \node[C] (c34b) at (5.35,1.33) {}; \node[C] (c34c) at (5.95,1.18) {}; \node[C] (c34d) at (6.48,1.05) {};
\draw[coreedge] (k3)--(c34a)--(c34b)--(c34c)--(c34d)--(k4);

\node[C] (c45a) at (7.25,0.45) {}; \node[C] (c45b) at (7.55,-0.10) {}; \node[C] (c45c) at (7.78,-0.45) {};
\draw[coreedge] (k4)--(c45a)--(c45b)--(c45c)--(k5);

\node[C] (c56a) at (7.25,-0.85) {}; \node[C] (c56b) at (6.55,-0.98) {}; \node[C] (c56c) at (5.85,-1.06) {};
\draw[coreedge] (k5)--(c56a)--(c56b)--(c56c)--(k6);

\node[C] (c67a) at (4.55,-1.02) {}; \node[C] (c67b) at (3.85,-0.92) {}; \node[C] (c67c) at (3.20,-0.82) {};
\draw[coreedge] (k6)--(c67a)--(c67b)--(c67c)--(k7);

\node[C] (c71a) at (1.95,-0.55) {}; \node[C] (c71b) at (1.30,-0.38) {}; \node[C] (c71c) at (0.65,-0.18) {};
\draw[coreedge] (k7)--(c71a)--(c71b)--(c71c)--(k1);

\node[C] (c27a) at (1.95,0.55) {}; \node[C] (c27b) at (2.15,0.05) {}; \node[C] (c27c) at (2.35,-0.45) {};
\draw[coreedge] (k2)--(c27a)--(c27b)--(c27c)--(k7);

\node[C] (c36a) at (4.35,0.95) {}; \node[C] (c36b) at (4.65,0.35) {}; \node[C] (c36c) at (4.95,-0.35) {};
\draw[coreedge] (k3)--(c36a)--(c36b)--(c36c)--(k6);

\node[C] (c64a) at (5.85,-0.65) {}; \node[C] (c64b) at (6.35,-0.20) {}; \node[C] (c64c) at (6.75,0.42) {};
\draw[coreedge] (k6)--(c64a)--(c64b)--(c64c)--(k4);

\node[T] (l1) at (0.95,-1.00) {}; \node[T] (l2) at (0.58,-1.45) {}; \node[T] (l3) at (0.20,-1.80) {}; \node[T] (l4) at (0.80,-1.90) {}; \node[T] (l5) at (1.18,-1.42) {};
\draw[treeedge] (c71b)--(l1)--(l2)--(l3); \draw[treeedge] (l2)--(l4); \draw[treeedge] (l1)--(l5);

\node[T] (ul1) at (1.55,1.90) {}; \node[T] (ul2) at (1.35,2.45) {}; \node[T] (ul3) at (1.78,2.75) {}; \node[T] (ul4) at (1.15,2.78) {};
\draw[treeedge] (k2)--(ul1)--(ul2); \draw[treeedge] (ul2)--(ul3); \draw[treeedge] (ul2)--(ul4);

\node[T] (s1) at (2.95,2.10) {}; \node[T] (s2) at (2.65,2.45) {}; \node[T] (s3) at (3.20,2.48) {};
\draw[treeedge] (c23b)--(s1)--(s2); \draw[treeedge] (s1)--(s3);

\node[T] (i1) at (4.20,0.15) {}; \node[T] (i2) at (3.85,0.48) {}; \node[T] (i3) at (3.72,0.05) {}; \node[T] (i4) at (4.08,-0.30) {};
\draw[treeedge] (c36b)--(i1)--(i2); \draw[treeedge] (i1)--(i3); \draw[treeedge] (i1)--(i4);

\node[T] (um1) at (5.30,1.95) {}; \node[T] (um2) at (4.95,2.35) {}; \node[T] (um3) at (5.62,2.35) {}; \node[T] (um4) at (5.80,2.85) {};
\draw[treeedge] (c34b)--(um1)--(um2); \draw[treeedge] (um1)--(um3)--(um4);

\node[T] (ru1) at (6.70,1.75) {}; \node[T] (ru2) at (6.45,2.20) {}; \node[T] (ru3) at (6.85,2.55) {}; \node[T] (ru4) at (7.05,2.05) {};
\draw[treeedge] (c34d)--(ru1)--(ru2); \draw[treeedge] (ru2)--(ru3); \draw[treeedge] (ru1)--(ru4);

\node[T] (rr1) at (7.85,-1.30) {}; \node[T] (rr2) at (8.30,-1.55) {}; \node[T] (rr3) at (8.70,-1.25) {}; \node[T] (rr4) at (8.95,-1.65) {}; \node[T] (rr5) at (8.50,-2.02) {};
\draw[treeedge] (c56a)--(rr1)--(rr2)--(rr3); \draw[treeedge] (rr3)--(rr4); \draw[treeedge] (rr2)--(rr5);

\node[T] (bm1) at (5.05,-1.75) {}; \node[T] (bm2) at (4.75,-2.15) {}; \node[T] (bm3) at (5.30,-2.25) {}; \node[T] (bm4) at (4.90,-2.70) {}; \node[T] (bm5) at (5.65,-2.65) {};
\draw[treeedge] (k6)--(bm1)--(bm2)--(bm4); \draw[treeedge] (bm1)--(bm3)--(bm5);

\node[T] (bl1) at (3.80,-1.55) {}; \node[T] (bl2) at (3.45,-1.95) {}; \node[T] (bl3) at (4.15,-1.95) {};
\draw[treeedge] (c67b)--(bl1)--(bl2); \draw[treeedge] (bl1)--(bl3);

\node[ann] (labK) at (-0.65,1.55) {kernel $\Kernel$};
\draw[->,thin] (labK.east) .. controls (-0.05,1.25) and (0.05,0.45) .. (k1);

\node[ann] (labC) at (5.10,3.15) {core $\Core$ (2-core)};
\draw[->,thin] (labC.south) .. controls (5.05,2.55) and (5.25,2.05) .. (c34b);

\node[ann] (labT) at (4.55,-3.55) {independent attached $\mathrm{PGW}(\mu)$ trees};
\draw[->,thin] (labT.north) .. controls (4.70,-3.05) and (4.90,-2.55) .. (bm2);

\end{tikzpicture}}
\caption{Schematic of the Ding--Kim--Lubetzky--Peres configuration-model construction of the slightly supercritical giant. Black vertices form the kernel $\Kernel$. Replacing each kernel edge by an independent path of geometric length produces the core $\Core$, i.e. the $2$-core in the DKLP construction. The new degree-$2$ vertices created by stretching are shown in white. Independent attached $\mathrm{PGW}(\mu)$ trees are then rooted at vertices of $\Core$; their vertices are shown in gray.}
\label{fig:dklp}
\end{figure}

The following lemma is the technical estimate on the DKLP contiguous model. It is
where the geometry of the slightly supercritical giant enters the proof; the first part of
Theorem~\ref{thm:main-results} is obtained from it by contiguity.

\begin{lemma}[DKLP model meeting-time estimate]\label{lem:dklp-meeting}
There are constants $0<c<C<\infty$, independent of $n$ and $\eps$, such that,
with high probability, the DKLP model $H$ is connected and satisfies
\begin{equation}\label{eq:main-bounds}
        c n
        \le \E^H_{\pi\otimes\pi}\meet
        \le \max_{x,y\in V(H)}\E^H_{x,y}\meet
        \le Cn.
\end{equation}
\end{lemma}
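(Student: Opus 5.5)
The plan is to prove the upper and lower bounds of \eqref{eq:main-bounds} separately on the DKLP model $H$. Connectivity of $H$ with high probability follows from the standard fact that the configuration-model kernel with minimum degree three and $\Theta(\eps^3 n)\to\infty$ vertices is connected with high probability. Stretching kernel edges into paths and attaching trees preserves connectivity. The middle inequality in \eqref{eq:main-bounds} is trivial.

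\emph{Upper bound.} Start from the resistance form \eqref{eq:Kemenyfirst} of Kemeny's constant. Since $|E(H)|=\Theta(\eps n)$ by \eqref{eq:sizes-intro}, it suffices to show
\[
\sum_{x,y}\pi(x)\pi(y)\Reff(x,y)=O(\eps^{-1})
\]
with high probability. For $x,y$ with core roots $\rho(x),\rho(y)$, use the triangle inequality for resistance:
\[
\Reff(x,y)\le \dist(x,\rho(x))+\Reff^{\Core}(\rho(x),\rho(y))+\dist(y,\rho(y)).
\]
\begin{enumerate}[label=(\roman*)]
\item The tree terms are controlled by the degree-weighted depth in $\PGW(\mu)$ trees. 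The size-biased depth has mean $O((1-\mu)^{-1})=O(\eps^{-1})$. A first-moment bound over the $\Theta(\eps^2n)$ independent trees, together with a variance or truncation argument, gives concentration of the weighted sum.
\item The core term is handled by showing that $\Reff^{\Core}(u,v)$ is $O(\eps^{-1})$ on average over $\pi$-typical roots. A path stretch has expected length $(1-\mu)^{-1}\asymp\eps^{-1}$. Kernel resistances are $O(1)$ on average because the kernel is an expander with minimum degree three; here I would invoke the configuration-model expansion estimate \cite[Lemma~5.3]{bkw} and a flow argument.
\end{enumerate}
Together these give $\Kemeny(H)=O(n)$. Then apply Lemma~\ref{lem:aldous-meeting}, which bounds $t_{\rm meet}$ by $O(\Kemeny+\tmix)$. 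The mixing estimate $\tmix=O(\eps^{-3}\log(\eps^3n))$ of \cite{dlp} is $o(n)$, so $t_{\rm meet}(H)\le Cn$.

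\emph{Lower bound.} Use Lemma~\ref{lem:stationary-variational}. For the product chain, a function $f$ on pairs with $f=1$ on the diagonal gives
\[
\E_{\pi\otimes\pi}\meet\gtrsim \frac{(1-\E_{\pi\otimes\pi}f)^2}{\mathcal E(f)}.
\]
Here $\mathcal E$ is the two-walk Dirichlet form, which carries the normalization $(2|E|)^{-1}\asymp(\eps n)^{-1}$. Choose $f(x,y)=\phi(\rho(x),\rho(y))$, with $\phi$ decaying in the core distance between roots on scale $r\asymp\eps^{-1}$. For instance, take $\phi(u,v)=(1-\dist_{\Core}(u,v)/r)_+$.
\begin{enumerate}[label=(\roman*)]
\item The energy of $f$ comes only from core edges. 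Each core edge contributes $O(r^{-2})$ per partner root in the $r$-ball. Weighting partners by their tree masses gives an energy of order $|E(\Core)|\cdot(\text{mass of } r\text{-ball})\cdot r^{-2}/|E|^2$.
\item The mean $\E_{\pi\otimes\pi}f$ is at most the $\pi\otimes\pi$-probability that the roots lie within distance $r$, which is small.
\end{enumerate}

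The main obstacle is the core-ball estimate. One must show that the tree-weighted stationary mass of a core ball of radius $\asymp\eps^{-1}$ is $O(\eps^{-1}\cdot\eps^{-1}/(\eps n))$, uniformly enough over centers. Such a ball sees $O(1)$ kernel vertices, hence $O(\eps^{-1})$ core vertices, each carrying a tree of expected size $O(\eps^{-1})$. This should make the ratio in the variational bound $\gtrsim n$. I would try to prove it by exploring the kernel locally as a branching process with bounded mean degree. Tree sizes need truncation since PGW tails are heavy near criticality, with a second-moment bound to control the sum with high probability. The same truncated moment computations also feed step (i) of the upper bound.
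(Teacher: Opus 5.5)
Your proposal is correct and follows the paper's proof essentially step for step. The upper bound goes through the resistance form of Kemeny's constant (tree depths plus a current-flow bound on the stretched expander kernel), then Aldous's comparison with $\tmix=o(n)$. The lower bound uses the same core-projection test function, whose mean and Dirichlet energy are controlled by averaged (not uniform-in-center) core-ball estimates; these come from a branching-process exploration of the kernel and conditional second moments of the tree masses.

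One minor correction: the Ding--Lubetzky--Peres mixing bound is $O(\eps^{-3}\log^2(\eps^3n))$, not $O(\eps^{-3}\log(\eps^3n))$. It is still $o(n)$, so the argument is unaffected.
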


The middle inequality in \eqref{eq:main-bounds} is immediate, since the
stationary expectation is an average of the quenched quantities
$\E^H_{x,y}\meet$ over starting pairs.

For a finite connected multigraph $G$, define the random target time by
\begin{equation}\label{eq:kemeny-target}
        \Kemeny(G):=\sum_{v\in V(G)}\pi(v)\E^G_\pi\tau_v .
\end{equation}
This is Kemeny's constant. The random target lemma \cite[Lemma~10.1]{lpw} says
that
\[
        \sum_{v\in V(G)}\pi(v)\E^G_x\tau_v=\Kemeny(G)
        \qquad\text{for every }x\in V(G).
\]
Lemma~\ref{lem:ct-commute} gives the equivalent resistance form
\begin{equation}\label{eq:kemeny-resistance}
        \Kemeny(G)=|E(G)|\sum_{u,v\in V(G)}\pi(u)\pi(v)\Reff^G(u,v).
\end{equation}
For the effective-resistance formulation of Kemeny's constant, see also
\cite{wang-dubbeldam-vanmieghem}.

\begin{lemma}[Uniform random-target bounds]\label{lem:kemeny}
There is an absolute constant $C_{\rm K}<\infty$ such that the following
estimates hold.
\begin{enumerate}
\item If $\eps=\eps(n)\to0$ and $\eps^3n\to\infty$, and $H_{\rm ER}$
denotes the giant component of $G(n,(1+\eps)/n)$, then, with high probability,
\[
        \Kemeny(H_{\rm ER})\le C_{\rm K} n .
\]
\item For every fixed $\lambda>1$, let $\mathcal C_1$ denote the giant component of
$G(n,\lambda/n)$. Then, with high probability,
\[
        \Kemeny(\mathcal C_1)\le C_{\rm K} n .
\]
\end{enumerate}
The same constant $C_{\rm K}$ works in both estimates; in particular it is
independent of the fixed value of $\lambda>1$.
\end{lemma}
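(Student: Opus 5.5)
We use the resistance form \eqref{eq:kemeny-resistance}, $\Kemeny(G)=|E(G)|\sum_{u,v}\pi(u)\pi(v)\Reff^G(u,v)$, and bound the degree-weighted average resistance by a sum of pieces, each controlled by the DKLP structure. In the slightly supercritical case we work on the DKLP model $H$ and transfer to $H_{\rm ER}$ by contiguity at the end. The event $\{\Kemeny\le C_{\rm K}n\}$ is isomorphism invariant, so the transfer is legitimate.

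\textbf{Decomposition.} Every $u\in V(H)$ lies in an attached tree rooted at a core vertex $\rho(u)\in\Core$. Each core vertex in turn lies on a stretched kernel edge with nearest kernel endpoint $\kappa(u)\in V(\Kernel)$. The triangle inequality for effective resistance gives
\[
\Reff(u,v)\le d(u,\rho(u))+\Reff(\rho(u),\kappa(u))+\Reff^{\Core}(\kappa(u),\kappa(v))+\Reff(\kappa(v),\rho(v))+d(\rho(v),v),
\]
where tree and path resistances are bounded by graph distances. By symmetry it suffices to show three things with high probability:
\[
\sum_u\pi(u)\,d(u,\rho(u))=O(\eps^{-1}),\qquad \sum_u\pi(u)\,L_{e(u)}=O(\eps^{-1}),\qquad \max_{a,b\in\Kernel}\Reff^{\Core}(a,b)=O(\eps^{-1}).
\]
Here $L_{e(u)}$ is the length of the stretched kernel edge carrying $\rho(u)$. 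Combined with $|E(H)|=\Theta(\eps n)$ from \eqref{eq:sizes-intro}, these give $\Kemeny(H)=O(\eps n\cdot\eps^{-1})=O(n)$.

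\textbf{First two estimates.} These are first- and second-moment computations for sums of i.i.d.\ pieces. A $\PGW(\mu)$ tree with $1-\mu\asymp\eps$ has expected total size $\asymp\eps^{-1}$. Its expected sum of degree times depth is $\asymp\sum_k k\mu^k\asymp\eps^{-2}$. Summing over the $\Theta(\eps^2 n)$ core roots gives $O(n)$, and dividing by $2|E|\asymp\eps n$ gives $O(\eps^{-1})$. For the path term, a kernel edge of length $L$ carries $L$ core vertices, each with an attached tree of mean size $\asymp\eps^{-1}$. The mass weighted by $L$ is therefore $\asymp \eps^{-1}\E L^2\asymp\eps^{-3}$ per kernel edge. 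Multiplied by $\Theta(\eps^3 n)$ kernel edges and divided by $\eps n$, this again gives $O(\eps^{-1})$. Concentration needs variance bounds: tree sizes have all moments of order $\eps^{-(2k-1)}$, and there are $\eps^3n\to\infty$ independent blocks, so Chebyshev suffices. If the moments get heavy, I would use Markov's inequality on the nonnegative sum instead, accepting a large constant; this works because the desired bound is only up to constants and only needs to hold with high probability in expectation scale. In fact Markov only gives probability bounded away from zero, so a truncation plus Chebyshev is the intended route.

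\textbf{Kernel resistance (main obstacle).} $\Kernel$ is a configuration model with minimum degree $3$ and degree moments controlled by Lemma~\ref{lem:kernel-inputs}. By the expansion estimate \cite[Lemma~5.3]{bkw} it is an expander, so a unit-resistance kernel has $\Reff^{\Kernel}(a,b)=O(1)$ uniformly. The stretched edges have i.i.d.\ geometric resistances of mean $\asymp\eps^{-1}$ but unbounded maximum of order $\eps^{-1}\log n$. So I cannot simply multiply by the maximum length. Instead I would route a unit flow from $a$ to $b$ through disjoint paths, or spread it by expansion. The energy is then $\sum_e \theta(e)^2L_e$. Choosing the flow adaptively and independently of the lengths, for example by expansion-based flows whose edge loads are $O(1)$ in $\ell^2$, gives conditional expected energy $O(\eps^{-1})$. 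The uniform-over-$(a,b)$ statement is not actually needed, though. Averaging over $\kappa(u),\kappa(v)$ with the $\pi$-weights suffices, and the expectation bound plus Markov then closes the argument. I would therefore bound $\sum_{a,b}w_aw_b\Reff^{\Core}(a,b)$ in expectation rather than its maximum.

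\textbf{Fixed supercritical case.} Here I would use the strictly supercritical contiguous model \cite[Theorem~1]{dlp-strict} and run the same three estimates. The kernel there is replaced by the core with bounded-mean geometric stretches and trees. All quantities are bounded by absolute constants times $1$, with $|E|\le C\lambda n$ balanced against the average resistance $O(1/\lambda)$. I expect this parameter tracking, needed so that $C_{\rm K}$ is uniform in $\lambda$ including large $\lambda$, to require care: it needs resistance $\lesssim1/\lambda$ for typical pairs, coming from degrees of order $\lambda$.
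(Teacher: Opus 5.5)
Your overall architecture matches the paper's: the resistance form of $\Kemeny$, a split into tree depth, position along a stretch, and a kernel term, kernel expansion, and a contiguity transfer. Your tree-depth and stretch-position averages are fine. The gap is the kernel term.

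First, the target $\max_{a,b}\Reff^{\Core}(a,b)=O(\eps^{-1})$ is false, not merely unnecessary. With high probability some degree-three kernel vertex has all three incident stretches of length at least $c\eps^{-1}\log(\eps^3n)$. Its resistance to every other kernel vertex is then of that order.

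Second, the averaged replacement you propose does not close as described. ``Expectation plus Markov'' gives $\sum_{a,b}w_aw_b\Reff^{\Core}(a,b)\le K\eps^{-1}$ only with probability at least $1-C/K$. That is not probability tending to one for a fixed constant, which is exactly the objection you raised yourself for the tree terms.

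The weights are also not independent of the lengths. Your $w_a$ grows with $\sum_{e\ni a}L_e$, and a unit current leaving $a$ puts squared current at least $1/d_{\Kernel}(a)$ on precisely those edges. So the per-pair bound $\E[\sum_eL_e(\theta_e^{a,b})^2\mid\Kernel]=O(\eps^{-1})$ does not by itself control the weighted sum.

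What is missing is a per-edge load bound that makes the pair average deterministic given the lengths. For the unweighted kernel unit currents, every kernel edge $e$ satisfies $\sum_{a,b}(\theta_e^{a,b})^2=2N\,b_e^{\mathsf T}(\Delta_{\Kernel}^{\dagger})^2b_e\le 2N/\gamma_0$. Thomson's principle and Jensen's inequality then give $\sum_{a,b}\Reff^{\Kernel^L}(a,b)^r\le(2/\gamma_0)^rN\sum_eL_e^r$ for $r=1,2,4$, and the stretch sums concentrate by the law of large numbers.

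The correlation with the projected masses is then removed by Cauchy--Schwarz: $\sum_{a,b}n_an_b\Reff^{\Kernel^L}(a,b)\le(\sum_an_a^2)(\sum_{a,b}\Reff^{\Kernel^L}(a,b)^2)^{1/2}$. This is why the second and fourth resistance moments are needed. Only after that are the random tree masses inserted, through conditional variance bounds on the linear and quadratic fluctuations. This is the content of Lemma~\ref{lem:core-resistance}, and your proposal has no substitute for it.

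The fixed-supercritical part has a second gap, and there the $\lambda$-uniformity of $C_{\rm K}$ is the whole point. Your normalization fails near criticality. The stretch means are $1/\delta$ with $\delta=1-\mu_\lambda\asymp\lambda-1$, so they are not uniformly bounded. The degree-weighted average resistance is of order $(\lambda-1)^{-1}$ as $\lambda\downarrow1$, not $O(1/\lambda)$, and pairing that with $|E|\le C\lambda n$ gives $C\lambda n/(\lambda-1)$.

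The right single parameter is $\Lambda_0=\lambda(1-q)$. One needs $|E|\le C\Lambda_0n$ and average resistance at most $C/\Lambda_0$, tracked all the way from $\Lambda_0\downarrow0$ to $\Lambda_0\to\infty$; this is Lemma~\ref{lem:fixed-parameter-target}.

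For large $\Lambda_0$ you name the requirement (resistance $\lesssim1/\lambda$ coming from degrees of order $\lambda$) but supply no mechanism. The paper uses three ingredients, with $m\asymp1\vee\Lambda_0$ the typical kernel degree:
\begin{enumerate}
\item A normalized kernel gap that is uniform in the Poisson parameter (Lemma~\ref{lem:uniform-conditioned-poisson-expansion}), giving $\Reff^{\Kernel}(x,y)\le\gamma_*^{-1}(d_x^{-1}+d_y^{-1})$.
\item Inverse-degree moments $\sum_xd_x^{-r}\le C_rNm^{-r}$.
\item A Jensen step weighted by $r_{xy}^{\,r-1}$ rather than by the absolute bound $2/\gamma_0$, which would lose a factor $m^{r-1}$. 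The per-edge load bound enters only in the variance.
\end{enumerate}
Without these, part 2 of the statement, and in particular the independence of $C_{\rm K}$ from $\lambda$, is not established.
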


\paragraph{Ambient normalization.}
The right side in Lemma~\ref{lem:kemeny} is measured in the ambient number of
vertices $n$, not in the component size $|\mathcal C_1|$. This distinction matters near
the transition. As $\lambda\downarrow1$, the giant has order
$(\lambda-1)n$ vertices and edges, while its degree-weighted average effective
resistance is larger by the compensating factor $(\lambda-1)^{-1}$.

The next three lemmas fix the normalizations for commute times, meeting times,
and variational hitting-time bounds.

\begin{lemma}[Continuous-time commute and target identities]\label{lem:ct-commute}
Let $G$ be a finite connected multigraph, viewed as a unit-conductance
network under the conventions above. Let $(X_t)_{t\ge0}$ be a continuous-time simple
random walk with total jump rate $1$. Then, for all vertices $u,v$,
\begin{equation}\label{eq:ct-commute-general}
        \E_u\tau_v+\E_v\tau_u=2|E(G)|\Reff(u,v).
\end{equation}
Consequently,
\begin{equation}\label{eq:ct-target-general}
        \Kemeny(G) = \sum_v\pi(v)\E_\pi\tau_v
        =|E(G)|\sum_{u,v}\pi(u)\pi(v)\Reff(u,v).
\end{equation}
\end{lemma}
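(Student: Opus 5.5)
The plan is to reduce both identities to the classical discrete-time statements by comparing the continuous-time walk with its embedded jump chain. Since every vertex has total jump rate $1$, the continuous-time walk is obtained from the discrete-time lazy-free simple random walk $(Z_k)_{k\ge0}$ on the multigraph $G$ by running it along the jump times of an independent rate-$1$ Poisson process. Here the jump chain moves from $x$ along a uniformly chosen incident half-edge, with loops giving holding steps. Hitting $v$ is an event depending only on the sequence of visited states, so $\tau_v$ equals the sum of $T_v:=\inf\{k:Z_k=v\}$ independent $\mathrm{Exp}(1)$ holding times. Wald's identity then gives $\E_u\tau_v=\E_u T_v$ for every pair $u,v$. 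This is the only point where the continuous-time normalization enters, and it shows that the jump-rate-$1$ convention is exactly the one with no extra constant.

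Next I invoke the discrete-time commute-time identity for the jump chain. Its stationary law is $\pi(x)=d(x)/(2|E(G)|)$, because the chain is reversible with respect to $d(x)$: the transition weight between distinct $x,y$ is the number of parallel edges, divided by $d(x)$, and it is symmetric after multiplying by $d(x)$. The identity $\E_uT_v+\E_vT_u=c_G\Reff(u,v)$ holds with $c_G=\sum_x c(x)$, where $c(x)$ is the total conductance at $x$; see \cite[Proposition~10.7]{lpw} and \cite[Chapter~2]{lyons-peres}.

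The only care needed is the loop convention, which I expect to be the main (minor) obstacle. A loop contributes $2$ to $d(x)$ and would carry conductance in the network underlying the Markov chain, whereas the resistance $\Reff$ defined via $\mathcal D_G$ ignores loops. To handle this, I treat a loop as a conductance-$1$ edge from $x$ to itself, counted twice in $d(x)$. With this weighting, $c(x)=d(x)$ and the total $\sum_x d(x)=2|E(G)|$, with each edge, loops included, counted twice. A self-edge carries no current in any unit flow, so it does not affect Thomson's principle and $\Reff$ is unchanged. The commute identity proof, via the Green function or the voltage $\E_u$-escape argument, goes through verbatim for reversible chains with holding. This yields \eqref{eq:ct-commute-general}.

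For \eqref{eq:ct-target-general}, I average. By the random target lemma, $\sum_v\pi(v)\E_x\tau_v$ does not depend on $x$, and this is also valid in continuous time since hitting expectations coincide with the discrete ones. Hence the Kemeny constant satisfies
\[
\Kemeny(G)=\sum_{u,v}\pi(u)\pi(v)\E_u\tau_v=\tfrac12\sum_{u,v}\pi(u)\pi(v)\bigl(\E_u\tau_v+\E_v\tau_u\bigr),
\]
where the second equality symmetrizes in $u,v$. Inserting \eqref{eq:ct-commute-general} gives $|E(G)|\sum_{u,v}\pi(u)\pi(v)\Reff(u,v)$, with $\Reff(u,u)=0$ on the diagonal.
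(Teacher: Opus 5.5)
Your proposal is correct and is essentially the paper's proof: reduce to the embedded jump chain, note that the continuous-time hitting expectation equals the expected jump count because the $\operatorname{Exp}(1)$ holding times are independent of the jump chain, and then quote the discrete commute identity. Your loop bookkeeping ($c(x)=d(x)$, total conductance $2|E(G)|$, loops carrying no current) and the symmetrization yielding \eqref{eq:ct-target-general} fill in details the paper leaves implicit. The appeal to the random target lemma is not needed there, since $\Kemeny(G)=\sum_{u,v}\pi(u)\pi(v)\E_u\tau_v$ is just the definition with $\E_\pi$ expanded.
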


\begin{proof}
The embedded jump chain is discrete-time simple random walk, and the holding
times between successive jumps are i.i.d.\ $\operatorname{Exp}(1)$. Conditional
on the number of jumps made before hitting $v$, the expected elapsed time equals
that number. Hence the continuous-time hitting expectation equals the expected
jump count of the embedded chain. The usual commute identity for the discrete
walk on an unweighted network \cite[Chapter~9]{lpw} therefore gives
\eqref{eq:ct-commute-general}.
\end{proof}

\begin{lemma}[Aldous meeting-time comparison]\label{lem:aldous-meeting}
Let $(X_t)_{t\ge0}$ be a finite irreducible reversible continuous-time chain
with stationary law $\pi$, and write $P^t$ for its transition semigroup. Let
$\tau_{\rm meet}$ be the first time two independent copies of the chain occupy
the same state. Put
\begin{equation}\label{eq:aldous-t1}
        t^*_{\rm mix}:=\inf\left\{t\ge0:
        \max_x\|P^t(x,\cdot)-\pi\|_{\TV}\le \frac1{2e}\right\},
\end{equation}
and let
\[
        T_M:=\max_{x,y}\E_{x,y}\tau_{\rm meet} .
\]
If $H_i$ denotes the hitting time of state $i$ by one copy of the chain, then
there is a universal constant $K_A<\infty$ such that
\begin{equation}\label{eq:aldous-prop2}
        T_M
        \le
        K_A\left(\sum_i
        \frac{\pi_i}{t^*_{\rm mix}\vee \E_\pi H_i}\right)^{-1} .
\end{equation}
It follows that, for continuous-time simple random walk with jump rate $1$ on a
finite connected multigraph $G$,
\begin{equation}\label{eq:aldous-kemeny-consequence}
        t_{\rm meet}(G)
        \le K'_A\bigl(\tmix(G)+\Kemeny(G)\bigr)
\end{equation}
for another universal constant $K'_A$.
\end{lemma}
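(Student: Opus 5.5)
The first assertion, \eqref{eq:aldous-prop2}, is Aldous's comparison \cite[Proposition~2]{aldous-meeting}, and I would quote it rather than reprove it. The only thing to check is that the hypotheses and normalizations match. Aldous works with a finite irreducible reversible continuous-time chain and two independent copies. His mixing parameter is the total-variation time at threshold $1/(2e)$, which is exactly $t^*_{\rm mix}$ in \eqref{eq:aldous-t1}. Any discrepancy with his variant (for example a separation-type or $\bar d$-type parameter) changes $t^*_{\rm mix}$ only by a universal factor, and that factor is absorbed into $K_A$. So I would record \eqref{eq:aldous-prop2} as a citation and concentrate on the consequence.

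For \eqref{eq:aldous-kemeny-consequence}, I first check that the jump-rate-$1$ walk on a finite connected multigraph $G$ fits the framework. Its generator \eqref{eq:generator} is irreducible because $G$ is connected. It is reversible with respect to $\pi_G$ in \eqref{eq:pi}, since $\pi(x)\cdot(\#\text{edges }xy)/d(x)=(\#\text{edges }xy)/(2|E(G)|)$ is symmetric in $x,y$. Loops only add holding and do not break reversibility.

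Next, put $a_i:=t^*_{\rm mix}\vee\E_\pi H_i>0$. Since $x\mapsto1/x$ is convex on $(0,\infty)$, Jensen's inequality with the probability weights $\pi_i$ gives
\[
\sum_i\frac{\pi_i}{a_i}\ \ge\ \Bigl(\sum_i\pi_ia_i\Bigr)^{-1}.
\]
Using $a_i\le t^*_{\rm mix}+\E_\pi H_i$, this yields
\[
\Bigl(\sum_i\frac{\pi_i}{a_i}\Bigr)^{-1}\le\sum_i\pi_ia_i\le t^*_{\rm mix}+\sum_i\pi_i\E_\pi H_i=t^*_{\rm mix}+\Kemeny(G),
\]
where the last equality is the definition \eqref{eq:kemeny-target}.

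It remains to compare $t^*_{\rm mix}$ with $\tmix(G)$ from \eqref{eq:tmix-convention}, whose threshold is $1/4$. I would use the standard continuous-time facts $\bar d(s+t)\le\bar d(s)\bar d(t)$ and $d(t)\le\bar d(t)\le2d(t)$ \cite[Chapter~4]{lpw}. Together they give $d(k\,\tmix)\le\bar d(\tmix)^k\le2^{-k}$. Taking $k=3$ gives $2^{-3}=1/8<1/(2e)$, so $t^*_{\rm mix}\le3\,\tmix(G)$. Combining the three steps gives \eqref{eq:aldous-kemeny-consequence} with $K'_A=3K_A$.

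The only step that needs real care is the first: making sure Aldous's Proposition~2 is stated for continuous-time reversible chains with the same mixing parameter and the same meeting-time convention (two independent copies, worst-case start). If his constant is stated with a different mixing threshold or a different time scaling, the submultiplicativity argument above absorbs the change into a universal constant. Everything after that is elementary.
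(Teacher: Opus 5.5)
Your proposal is correct and takes essentially the paper's route: cite Aldous's Proposition~2, use convexity of $x\mapsto x^{-1}$ to get $(\sum_i\pi_i/a_i)^{-1}\le\sum_i\pi_ia_i\le t^*_{\rm mix}+\Kemeny(G)$, and absorb the change of threshold from $1/(2e)$ to $1/4$ by submultiplicativity. Your explicit choice $k=3$, giving $t^*_{\rm mix}\le 3\,\tmix(G)$ and $K'_A=3K_A$, simply makes concrete the absolute factor that the paper leaves implicit.
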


\begin{proof}
Estimate \eqref{eq:aldous-prop2} is Proposition~2 of Aldous
\cite[Proposition~2]{aldous-meeting}. Aldous--Fill record the same comparison,
slightly rearranged, in \cite[Chapter~14, Corollary~14.7]{af}. Set
\[
        A_i:=t^*_{\rm mix}\vee \E_\pi H_i .
\]
Since $x\mapsto x^{-1}$ is convex,
\[
        \left(\sum_i\frac{\pi_i}{A_i}\right)^{-1}
        \le \sum_i\pi_iA_i
        \le t^*_{\rm mix}+\sum_i\pi_i\E_\pi H_i .
\]
For the jump-rate-$1$ simple random walk, the last sum is $\Kemeny(G)$ by
\eqref{eq:kemeny-target}. The threshold $1/(2e)$ differs from the threshold in
\eqref{eq:tmix-convention} only by an absolute factor, by submultiplicativity of total-variation distance
\cite[Chapter~4]{lpw}. Absorbing this factor into the constant gives
\eqref{eq:aldous-kemeny-consequence}.
\end{proof}

\begin{lemma}[Variational lower bound from stationarity]
\label{lem:stationary-variational}
Let $(Z_t)_{t\ge0}$ be a finite irreducible reversible continuous-time chain with
stationary law $\pi$ and generator $\mathcal L$. Write
$\cE(f,f):=-\langle f,\mathcal L f\rangle_\pi$ for its Dirichlet form. If
$B$ is a subset of the state space and $0\le F\le1$ satisfies $F=1$ on $B$, then
\begin{equation}\label{eq:stationary-variational}
        \E_\pi\tau_B\ge \frac{(1-\E_\pi F)^2}{\cE(F,F)} .
\end{equation}
\end{lemma}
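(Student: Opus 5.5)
The plan is to combine the Dirichlet principle for hitting times with the Cauchy--Schwarz inequality, using the equilibrium potential of $B$.

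\emph{Reduction.} If $B$ is empty or $\E_\pi F\ge 1$ there is nothing to prove, so assume $B\neq\emptyset$ and $\E_\pi F<1$. Let $h(z):=\E_z\tau_B$. Then $h=0$ on $B$, $-\mathcal L h=1$ off $B$, and $\E_\pi\tau_B=\langle h,\one\rangle_\pi$. Split $\langle \one, h\rangle_\pi$ by writing $\one=(1-F)+F$, with $1-F$ vanishing on $B$. Since $h=0$ on $B$ and $-\mathcal L h=1$ off $B$, we have $-\mathcal L h\ge\one_{B^c}$ pointwise, and $-\mathcal L h$ may be negative on $B$. Pairing with the function $1-F$, which is nonnegative and vanishes on $B$, gives
\[
\langle 1-F,-\mathcal L h\rangle_\pi=\sum_{z\notin B}\pi(z)(1-F(z))=\E_\pi(1-F)=1-\E_\pi F .
\]

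\emph{Cauchy--Schwarz.} The left side equals $\cE(1-F,h)=-\cE(F,h)$, because constants lie in the kernel of the form. Cauchy--Schwarz for the nonnegative symmetric bilinear form $\cE$ then gives
\[
(1-\E_\pi F)^2\le \cE(F,F)\,\cE(h,h).
\]

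\emph{Identifying $\cE(h,h)$.} By reversibility, $\cE(h,h)=\langle h,-\mathcal L h\rangle_\pi=\sum_{z\notin B}\pi(z)h(z)=\E_\pi\tau_B$, since $h$ vanishes on $B$. Combining the two displays,
\[
(1-\E_\pi F)^2\le \cE(F,F)\,\E_\pi\tau_B,
\]
which is \eqref{eq:stationary-variational} whenever $\cE(F,F)>0$. If $\cE(F,F)=0$, then $F$ is constant by irreducibility, so $F\equiv1$ because $B\neq\emptyset$. In that case the numerator vanishes and the bound holds under the convention $0/0=0$ (or $\E_\pi F<1$ is violated). Note that the hypotheses $0\le F\le1$ are not needed beyond $F=1$ on $B$.

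\emph{Main obstacle.} The only delicate point is the sign bookkeeping on $B$, where $-\mathcal L h$ is not controlled. It is handled exactly because $1-F$ vanishes on $B$. This is why the argument pairs with $1-F$ rather than with $F$ directly.
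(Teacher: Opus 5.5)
Your proof is correct, and it takes a different, self-contained route from the paper's. The paper does not argue directly. It quotes the extremal characterization $\E_\pi\tau_B=\sup\{1/\cE(g,g):\ g=1 \text{ on } B,\ \E_\pi g=0\}$ from Aldous--Fill (Chapter~3, Proposition~3.41) and substitutes the centered, rescaled test function $g=(F-\E_\pi F)/(1-\E_\pi F)$.

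You instead prove the needed half of that characterization from scratch. Pairing $1-F$ against $-\mathcal L h$ with $h=\E_\cdot\tau_B$ gives exactly $1-\E_\pi F$, since $1-F$ vanishes on $B$ and $-\mathcal Lh=1$ off $B$. Then $\cE(h,h)=\E_\pi\tau_B$ and Cauchy--Schwarz for the symmetric nonnegative form finish the argument. The centering step disappears because $\cE$ annihilates constants.

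What your version buys:
\begin{itemize}
\item It shows transparently that only $F=1$ on $B$ is used, not $0\le F\le1$.
\item It avoids having to reconcile Dirichlet-form normalizations with an external reference, which the paper has to flag (``with their normalization'').
\item It handles the degenerate case $\E_\pi F=1$, where the paper's substitution divides by zero.
\end{itemize}
What the citation buys is brevity, together with the information that the bound is attained, namely by $F=1-h/\E_\pi\tau_B$, which need not take values in $[0,1]$. Your Cauchy--Schwarz step recovers this too, as its equality case.

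One small slip: the sentence asserting $-\mathcal Lh\ge\one_{B^c}$ pointwise is false on $B$. There $h\ge0$ attains its minimum value $0$, so $-\mathcal Lh\le0$, as your next clause in fact acknowledges. The claim is never used: only the equality $-\mathcal Lh=1$ off $B$ and the vanishing of $1-F$ on $B$ enter the computation. You should simply delete it.
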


\begin{proof}
This is the extremal characterization of hitting times from stationarity; see
Aldous--Fill \cite[Chapter~3, Proposition~3.41]{af}. With their normalization,
\[
        \E_\pi\tau_B
        =
        \sup\left\{
        \frac1{\cE(g,g)}:\ g=1 \text{ on } B,\ \E_\pi g=0
        \right\}.
\]
Taking
\[
        g=\frac{F-\E_\pi F}{1-\E_\pi F}
\]
gives \eqref{eq:stationary-variational}.
\end{proof}

\section{Proof roadmap}\label{sec:roadmap}

The upper bound has two deterministic ingredients. Lemma~\ref{lem:ct-commute}
identifies the random target time $\Kemeny(G)$ with a degree-weighted average
effective resistance. Lemma~\ref{lem:aldous-meeting} bounds
$t_{\rm meet}(G)$ by $\tmix(G)+\Kemeny(G)$. The random-graph work is therefore
focused: prove a uniform bound on $\Kemeny$.

For the giant of the slightly supercritical graph, attached trees contribute their average depth. The
core $\Core$ contributes average resistance of order $\eps^{-1}$. Since the
edge count is $\Theta(\eps n)$, the random target time is $O(n)$. The same
calculation, with the conjugate parameter kept throughout, gives the fixed
strictly supercritical bound with the same absolute comparison constant.

The lower bound uses the core projection. For $x\in V(H)$ let $r(x)$ be the
root of the attached tree containing $x$, viewed as a vertex of the stretched
core. The test function
\[
        F(x,y)=\max\left\{1-
        \frac{\dist_{\Core}(r(x),r(y))}{\lfloor\eps^{-1}\rfloor},0\right\}
\]
is one on the diagonal and constant inside attached trees. Its stationary mean
is $o(1)$: two independent stationary projected roots rarely lie within distance
$O(\eps^{-1})$. Its Dirichlet energy is $O(1/n)$, because only core moves can
change the function, and one core move changes it by $O(\eps)$. The variational
bound for hitting the diagonal gives $\E_{\pi\otimes\pi}\tau_{\rm meet}\ge cn$.

For fixed $\lambda>1$, Aldous's comparison (Lemma~\ref{lem:aldous-meeting}) and the uniform random-target
estimate give the upper bound. The lower constant must remain uniform in
$\lambda$, so the proof splits into two ranges. For $\lambda$ close to one, the same
core-projection test is uniform in the conjugate parameter. Once $\lambda$ is
bounded away from one, the diagonal test $F(x,y)=\one_{\{x=y\}}$ has stationary
mean and Dirichlet energy at most an absolute constant times $1/n$.

The critical-window statement uses the Nachmias--Peres estimates cited in
Lemma~\ref{lem:critical-geometry}. Their volume, diameter and mixing bounds give
the upper bound. Their Nash--Williams lower-bound argument gives two sets of
positive stationary mass separated by resistance of order $n^{1/3}$. Since the
component has $\Theta(n^{2/3})$ edges in a bounded critical window, edge volume
times resistance again gives the ambient scale $n$.

Section~\ref{sec:input} proves the slightly supercritical giant geometric inputs. Section~\ref{sec:proof}
combines those inputs with deterministic Markov-chain estimates to prove
Lemma~\ref{lem:dklp-meeting}. Section~\ref{sec:remaining-proofs} transfers the
result to Erd\H{o}s--R\'enyi graphs, treats the fixed-supercritical and
critical-window regimes, and proves the voter/coalescence corollary.

\section{Geometric input from the slightly supercritical giant}\label{sec:input}

We verify the random-graph inputs needed for the DKLP model of the slightly supercritical giant. Throughout
this section $H$ is the contiguous DKLP model, and all high-probability estimates
are uniform in the slightly supercritical regime.

For $u\in V(H)$ let $r(u)\in V(\Core)$ be the root of the attached tree
containing $u$. If $u\in V(\Core)$, set $r(u)=u$. Let
$h(u)=\dist(u,r(u))$. For $z\in V(\Core)$ put
\begin{equation}\label{eq:Wnu}
        W_z:=\sum_{u:r(u)=z}d(u),\qquad
        S:=\sum_{z\in V(\Core)}W_z=2|E(H)|,
        \qquad
        \nu(z):=\frac{W_z}{S}.
\end{equation}
If $U\sim\pi$, then $r(U)\sim\nu$. For $R\ge0$, define the core vertex ball,
its incident-edge count, and its degree volume by
\begin{equation}\label{eq:BE}
\begin{aligned}
        B_{\Core}(z,R)
        &:=\{w\in V(\Core):\dist_{\Core}(z,w)\le R\},\\
        B_E(z,R)
        &:=\bigl|\{e\in E(\Core):
        e\text{ has at least one endpoint in }B_{\Core}(z,R)\}\bigr|,\\
        B_D(z,R)
        &:=\sum_{w\in B_{\Core}(z,R)}d_{\Core}(w).
\end{aligned}
\end{equation}
Edges in $B_E$ are counted with multiplicity. Thus $B_E(z,R)$ includes edges
crossing from $B_{\Core}(z,R)$ to its complement; it is not merely the number of
edges induced by the vertex ball. In particular,
\[
        B_E(z,R)\le B_D(z,R)\le 2B_E(z,R).
\]

We collect the geometric facts about the slightly supercritical giant before proving the theorem.
The first estimates are elementary branching-process bounds for the attached
trees. They control two quantities used repeatedly: the degree mass attached to
a core vertex and the average depth seen from stationarity.

\begin{lemma}[Subcritical Galton--Watson moments]\label{lem:pgw-moments}
Let $0<\mu<1$, put $\delta=1-\mu$, and let $(Z_j)_{j\ge0}$ be a
$\PGW(\mu)$ process started from one particle. Set
\[
        T:=\sum_{j\ge0}Z_j,
        \qquad
        A:=\sum_{j\ge1} jZ_j,
\]
where $A$ is the total depth of the tree, equivalently the area under its
generation-size profile.
There is an absolute constant $C<\infty$ such that, uniformly in $\mu$,
\[
        \E T=\delta^{-1},\qquad
        \Var(T)\le C\delta^{-3},\qquad
        \E[(T-\E T)^4]\le C\delta^{-7},
\]
and
\[
        \E A\le C\delta^{-2},\qquad \E A^2\le C\delta^{-5}.
\]
Accordingly, if a $\PGW(\mu)$ tree is attached to a core vertex $z$, and
$W_z$ is the degree mass of the resulting rooted component as in
\eqref{eq:Wnu}, then, conditionally on the core,
\[
\begin{aligned}
        \Egr[W_z\mid\Core]&\le C(\delta^{-1}+d_{\Core}(z)),\qquad
        \Vargr(W_z\mid\Core)\le C\delta^{-3},\\
        \Egr[(W_z-\Egr W_z)^4\mid\Core]&\le C\delta^{-7}.
\end{aligned}
\]
\end{lemma}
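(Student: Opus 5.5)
The plan is to work with the generating-function and Lagrange-inversion description of $\PGW(\mu)$ total progeny, and to use generation-by-generation moment recursions for the depth functional $A$. For $T$, the total progeny of a $\PGW(\mu)$ tree has the Borel law $\Pbb(T=k)=e^{-\mu k}(\mu k)^{k-1}/k!$. Its cumulants come from the implicit equation $\phi(s)=s e^{\mu(\phi(s)-1)}$ for $\phi(s)=\E s^T$. Differentiating at $s=1$ gives $\E T=1/(1-\mu)=\delta^{-1}$ and $\Var(T)=\mu/\delta^{3}\le\delta^{-3}$. For the centered fourth moment, the cleanest route is cumulants. Writing $\kappa(\theta)=\log\E e^{\theta T}$, the relation $\kappa(\theta)=\theta+\mu(e^{\kappa(\theta)}-1)$ lets one differentiate repeatedly. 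Each derivative brings one more factor $(1-\mu e^{\kappa})^{-1}\approx\delta^{-1}$ together with powers of earlier derivatives. Induction then gives $\kappa_j=O(\delta^{-(2j-1)})$, so $\kappa_3=O(\delta^{-5})$ and $\kappa_4=O(\delta^{-7})$. Since $\E(T-\E T)^4=\kappa_4+3\kappa_2^2=O(\delta^{-7})+O(\delta^{-6})$, the claimed bound follows, with constants absolute because $\mu\le1$.

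For $A$, use $\E Z_j=\mu^j$, which gives $\E A=\sum_j j\mu^j=\mu/\delta^2\le\delta^{-2}$. For the second moment, expand $\E A^2=\sum_{j,k}jk\,\E[Z_jZ_k]$. For $j\le k$ we have $\E[Z_jZ_k]=\mu^{k-j}\E Z_j^2$, and the branching recursion gives $\E Z_j^2=\mu^{j}(1+\mu+\dots+\mu^{j-1})+\mu^{2j}\le\mu^j(j+1)$. This yields
\[
\E A^2\le 2\sum_{j\le k}jk(j+1)\mu^{k}\le C\sum_k k^4\mu^k\le C\delta^{-5}.
\]
A cheaper alternative is the deterministic bound $A\le T\cdot\mathrm{height}$. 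The direct sum above is simpler, so I would use it.

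For $W_z$, conditionally on $\Core$ a tree with root offspring $\Poi(\mu)$ is attached at $z$, so $W_z=d_{\Core}(z)+2(T-1)$: each tree edge contributes two to the degree sum, and the root additionally carries its core degree. Here $T$ counts the tree vertices including $z$. Hence
\[
\Egr[W_z\mid\Core]=d_{\Core}(z)+2(\delta^{-1}-1),
\]
while the variance and the centered fourth moment are $4$ and $16$ times those of $T$. The $W_z$ bounds therefore follow from the $T$ bounds. If the convention is that $z$ itself receives a $\PGW(\mu)$ number of children, this is exactly the $T$ above; otherwise it is a sum of a Poisson number of copies, and the same bounds follow by the law of total variance.

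The main obstacle is keeping the fourth-cumulant induction uniform as $\mu\uparrow1$. The key point to verify is that $\kappa'(0)=\delta^{-1}$ and that each subsequent derivative gains exactly a factor $\delta^{-2}$. This holds because differentiating $(1-\mu e^{\kappa})^{-1}$ produces $\mu e^{\kappa}\kappa'(1-\mu e^{\kappa})^{-2}$, that is, one extra $\delta^{-1}$ from the resolvent and one from $\kappa'$.
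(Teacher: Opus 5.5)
Your proposal is correct, and for $T$ and $W_z$ it follows the paper in substance. The paper differentiates the pgf identity $F(s)=s\exp(\mu(F(s)-1))$ at $s=1$ to get $\E T^j=O(\delta^{-(2j-1)})$ for $j\le4$. It then bounds the central moments term by term. You differentiate the equivalent cumulant identity $\kappa(\theta)=\theta+\mu(e^{\kappa}-1)$ instead. That gives closed forms directly, e.g.\ $\kappa_2=\mu\delta^{-3}$ and $\kappa_4=\mu(1+8\mu+6\mu^2)\delta^{-7}$, together with $\E(T-\E T)^4=\kappa_4+3\kappa_2^2$. You should note that $\E e^{\theta T}<\infty$ near $0$, which holds because the subcritical Borel law has exponential tails. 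Your decomposition $W_z=d_{\Core}(z)+2(T-1)$ is exactly the paper's. Your hedge about an alternative root convention is unnecessary, since in the model the root $z$ also has a $\Poi(\mu)$ number of children.

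The genuine difference is the area $A$. The paper uses the root decomposition $A=\sum_{i\le\xi}(T_i+A_i)$ with $\xi\sim\Poi(\mu)$ and the compound-Poisson second-moment formula. With Cauchy--Schwarz this gives a self-bounding inequality $x^2\le C\delta^{-4}+C\delta^{-3/2}x+\mu x^2$ for $x=(\E A^2)^{1/2}$. Absorbing $\mu x^2$ there tacitly requires $\E A^2<\infty$ a priori, which follows from $A\le T^2$. You instead compute directly from $\E[Z_jZ_k]=\mu^{k-j}\E Z_j^2$ and $\E Z_j^2\le(j+1)\mu^j$, reducing everything to $\sum_k k^4\mu^k\le C\delta^{-5}$. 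Your route is fully explicit and needs neither a bootstrap nor a finiteness check. The paper's route is shorter, reuses the moments of $T$ it has just computed, and extends more mechanically to higher moments of $A$, where explicit generation-correlation formulas become unwieldy.
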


\begin{proof}
The total progeny $T$ has the Borel distribution. If
$F(s)=\E[s^T]$ is its probability generating function, the branching property
gives
\[
        F(s)=s\exp\bigl(\mu(F(s)-1)\bigr).
\]
Differentiating this identity at $s=1$ gives
\[
        \E T=\delta^{-1},\qquad
        \E T^2=O(\delta^{-3}),\qquad
        \E T^4=O(\delta^{-7}).
\]
The $j$th derivative is obtained recursively from lower-order derivatives; at order
$j\le4$, the largest singular power is $(1-\mu)^{-(2j-1)}$. These estimates
imply the displayed variance and centered fourth-moment bounds.

For the area $A$, use the branching recursion at the root. If $\xi$ is the
number of children of the root and $(T_i,A_i)$ are independent copies of
$(T,A)$, then
\[
        A=\sum_{i=1}^\xi (T_i+A_i),\qquad \xi\sim\operatorname{Poisson}(\mu).
\]
Taking expectations gives $\E A=\mu(\E T+\E A)$, hence
$\E A=\mu\delta^{-2}$. For the second moment we use the elementary
compound-Poisson formula
\[
  \E\left(\sum_{i=1}^{\xi}B_i\right)^2
  =\mu\E B_1^2+\mu^2(\E B_1)^2,
  \qquad \xi\sim\Poi(\mu),
\]
for independent copies $B_i$. With $B_i=T_i+A_i$ and
$x=(\E A^2)^{1/2}$, Cauchy's inequality gives
\[
        x^2\le \mu\E(T+A)^2+\mu^2(\E T+\E A)^2
        \le C\delta^{-3}+C\delta^{-3/2}x+\mu x^2+C\delta^{-4}.
\]
Since $1-\mu=\delta$, this implies $x^2=O(\delta^{-5})$. For a tree attached at
a core vertex $z$, the root contributes its core degree and the attached tree
edges contribute $2(T-1)$ to the degree mass. The conditional moment estimates
for $W_z$ follow.
\end{proof}

The next three lemmas record the DKLP and configuration-model inputs used later.
They are separated so that subsequent references can point to the size, kernel,
or tree estimate needed at that moment.

\begin{lemma}[Slightly supercritical size and mixing inputs]\label{lem:standard}
With high probability the following estimates hold, with constants independent
of $n$ and $\eps$. Writing $N=|V(\Kernel)|$ and $N_c=|V(\Core)|$,
\begin{equation}\label{eq:standard-sizes}
        N=\Theta(\eps^3n),\qquad N_c=\Theta(N/\eps),\qquad
        |E(H)|=\Theta(\eps n).
\end{equation}
Equivalently, there are absolute constants $0<c_{\rm core}<C_{\rm core}<\infty$
such that, with high probability,
\begin{equation}\label{eq:core-size-explicit}
        c_{\rm core}\eps^2n\le N_c\le C_{\rm core}\eps^2n .
\end{equation}
The continuous-time mixing time satisfies
\begin{equation}\label{eq:mixing}
        \tmix(H)=O\bigl(\eps^{-3}\log^2(\eps^3n)\bigr)=o(n).
\end{equation}
\end{lemma}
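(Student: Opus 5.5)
The plan is to derive all three estimates directly from the DKLP construction, treating the three layers in turn, and then import the mixing bound from Ding--Lubetzky--Peres.

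\emph{Kernel size.} Conditionally on $\Lambda$, the count $N=\sum_i\one_{\{D_i\ge3\}}$ is binomial with success probability $\Pbb(\Poi(\Lambda)\ge3)$. By the stated concentration $\Lambda=(1+\eps-\mu)+O_{\Pbb}((\eps n)^{-1/2})$ and $1+\eps-\mu\asymp\eps$, we have $\Lambda\asymp\eps$ with high probability. This gives success probability $\asymp\Lambda^3/6\asymp\eps^3$. The conditioning on even parity has probability bounded below, so it affects nothing. Chebyshev's inequality then gives $N=\Theta(\eps^3 n)$, since $\eps^3n\to\infty$. In the same way, the total kernel degree $\sum_i D_i\one_{\{D_i\ge3\}}$ is $\Theta(\eps^3n)$, so $|E(\Kernel)|=\Theta(N)$.

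\emph{Core size.} The core has $|E(\Kernel)|$ stretched edges, and their lengths $L_e$ are i.i.d.\ geometric with mean $(1-\mu)^{-1}\asymp\eps^{-1}$ and variance $O(\eps^{-2})$. Hence
\[
N_c=N+\sum_e(L_e-1)
\]
has mean $\Theta(N/\eps)$ and standard deviation $O(\sqrt N/\eps)$. Since $N\to\infty$, Chebyshev gives $N_c=\Theta(N/\eps)=\Theta(\eps^2n)$, which is \eqref{eq:core-size-explicit}.

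\emph{Edge count of $H$.} Conditionally on $\Core$, the tree sizes $T_z$ attached at the $N_c$ core vertices are i.i.d.\ Borel variables. By Lemma~\ref{lem:pgw-moments} they have mean $\delta^{-1}\asymp\eps^{-1}$ and variance $O(\eps^{-3})$. The total $\sum_z T_z$ therefore has mean $\Theta(N_c/\eps)=\Theta(\eps n)$ and standard deviation $O(\sqrt{N_c}\,\eps^{-3/2})=O(\eps^{-1/2}n^{1/2})$. Its ratio to the mean is $O((\eps^3n)^{-1/2})=o(1)$, so $|V(H)|=\Theta(\eps n)$. The edge count is $|V(H)|-1$ plus the cycle rank of the core, and the cycle rank is $|E(\Kernel)|-N+1=O(\eps^3n)$. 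Hence $|E(H)|=\Theta(\eps n)$ as well. Connectivity of $H$ follows because a configuration model with minimum degree three is connected with high probability; the failure probability is $o(1)$ once $N\to\infty$.

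\emph{Mixing.} This is the only step that is not elementary, and I would not reprove it. Ding--Lubetzky--Peres \cite[Theorem~1]{dlp} show that the lazy discrete-time walk on the slightly supercritical giant has mixing time of order $\eps^{-3}\log^2(\eps^3n)$ with high probability. Their proof goes through the same contiguous model (see \cite[Section~4.2]{dlp}), so the bound applies to $H$. The main obstacle is matching conventions between their lazy discrete-time walk and our rate-$1$ continuous-time walk.
\begin{enumerate}
\item The continuous-time walk is a Poissonized version of the jump chain, and the standard comparison bounds continuous-time mixing by a constant multiple of lazy discrete-time mixing \cite[Chapter~20]{lpw}.
\item Alternatively, one can go through the relaxation time. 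Their spectral-gap estimate, combined with $\log(1/\pi_{\min})=O(\log n)$, gives a weaker bound, so the direct comparison is preferable.
\end{enumerate}
Finally, $\eps^{-3}\log^2(\eps^3n)/n=\log^2(a_n)/a_n\to0$ with $a_n=\eps^3n$, which gives the $o(n)$ claim in \eqref{eq:mixing}.
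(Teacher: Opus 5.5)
Your proposal is correct. For the mixing bound it is the paper's argument: the Ding--Lubetzky--Peres estimate is proved on the contiguous model, so the only work is converting from the lazy chain to continuous time. The paper does this by hand. It uses $e^{t(P-I)}=e^{2t(P_{\rm lazy}-I)}$, so the continuous-time walk is the lazy chain run for a $\Poi(2t)$ number of steps, and then applies a Poisson Chernoff bound. This is exactly the comparison you cite from Chapter~20 of Levin--Peres.

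The genuine difference is in the size estimates. The paper reads them off from the DKLP theorem. You rederive them from the construction by conditional second-moment bounds:
\begin{itemize}
\item a binomial count of retained degrees, given $\Lambda\asymp\eps$;
\item a sum of $\Theta(N)$ independent geometric stretches;
\item a sum of $N_c$ independent Borel tree sizes, using Lemma~\ref{lem:pgw-moments}, which is proved before this lemma, so there is no circularity.
\end{itemize}
Your route makes the lemma self-contained and shows where $\eps^3n\to\infty$ enters, since each relative fluctuation is a negative power of $\eps^3n$ or of $N$. The citation is shorter, and DKLP in fact give first-order asymptotics rather than just orders of magnitude.

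Two small remarks on your write-up.
\begin{itemize}
\item The even-parity conditioning is harmless for a concrete reason. One has $\E(-1)^{D_i\one_{\{D_i\ge3\}}}=e^{-2\Lambda}+2\Lambda e^{-\Lambda}>0$, so the even event has probability at least $1/2$; it tends to $1/2$ here because $n\Lambda^3\to\infty$.
\item The edge count does not need your connectivity aside. Stretching kernel edges and attaching trees each add equally many vertices and edges, so $|E(H)|=|V(H)|+|E(\Kernel)|-N$ holds whether or not $H$ is connected.
\end{itemize}
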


\begin{proof}
The size estimates and the distributional description of the kernel, stretches,
and attached trees are part of the DKLP construction \cite[Theorem~2]{dklp}.
The proof of \cite[Theorem~1, Section~4.2]{dlp} is carried out on the contiguous
graph $\widetilde{\mathcal C}_1$ and gives lazy-walk mixing time
$O(\eps^{-3}\log^2(\eps^3n))$ there. To convert to our continuous-time
normalization, let $P$ be the non-lazy jump kernel and
$P_{\rm lazy}=(I+P)/2$. Then
\[
        e^{t(P-I)}=e^{2t(P_{\rm lazy}-I)}.
\]
Thus the continuous-time chain is the lazy chain observed after a Poisson number
of steps with mean $2t$. A Chernoff bound for that Poisson variable changes the
mixing estimate only by an absolute multiplicative factor. If
$a_n=\eps^3n\to\infty$, then
$\eps^{-3}\log^2(\eps^3n)/n=\log^2(a_n)/a_n\to0$, proving \eqref{eq:mixing}.
\end{proof}

\begin{lemma}[Uniform expansion for conditioned-Poisson kernels]
\label{lem:uniform-conditioned-poisson-expansion}
There is an absolute constant $\gamma_*>0$ with the following property. Let
$\Lambda=\Lambda_N>0$ be arbitrary. Let $D_1,\ldots,D_N$ be independent with
law $\Poi(\Lambda)$ conditioned on being at least three, condition further on
$\sum_iD_i$ being even, and let $\Kernel_N$ be the corresponding
configuration-model multigraph. If $P_{\Kernel_N}$ is the simple-random-walk
transition kernel on $\Kernel_N$, then
\[
        \Pbb\bigl(\lambda_2(I-P_{\Kernel_N})\ge\gamma_*\bigr)\longrightarrow1
\]
as $N\to\infty$, uniformly in the choice of the parameter sequence
$\Lambda_N>0$. Consequently, on the same event,
\begin{equation}\label{eq:uniform-spectral-resistance}
        \Reff^{\Kernel_N}(x,y)
        \le \gamma_*^{-1}\left(\frac1{d_{\Kernel_N}(x)}+
             \frac1{d_{\Kernel_N}(y)}\right)
        \qquad (x,y\in V(\Kernel_N)).
\end{equation}
In particular, let
\[
        \bar d_N:=\frac1N\sum_xd_{\Kernel_N}(x)
\]
and let $\widehat\pi$ be the degree-stationary law of simple random walk on the
kernel, namely
\[
        \widehat\pi(x)
        =\frac{d_{\Kernel_N}(x)}{\sum_zd_{\Kernel_N}(z)}
        =\frac{d_{\Kernel_N}(x)}{N\bar d_N}.
\]
Then
\begin{equation}\label{eq:uniform-kernel-avg-res}
        \sum_{x,y}\widehat\pi(x)\widehat\pi(y)\Reff^{\Kernel_N}(x,y)
        \le \frac{2}{\gamma_*\bar d_N}.
\end{equation}
\end{lemma}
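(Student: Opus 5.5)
The plan has two parts: a spectral gap for the random kernel, and then the deterministic consequences \eqref{eq:uniform-spectral-resistance} and \eqref{eq:uniform-kernel-avg-res}.

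\emph{Spectral gap.} I would import the configuration-model expansion estimate \cite[Lemma~5.3]{bkw}. It gives, with high probability, edge expansion $|E(S,S^c)|\ge\alpha\,\mathrm{vol}(S)$ for every $S$ with $\mathrm{vol}(S)\le\mathrm{vol}(V)/2$, for degree sequences with minimum degree three. Cheeger's inequality then gives $\lambda_2(I-P)\ge\alpha^2/2=:\gamma_*$.

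The main obstacle is uniformity in $\Lambda_N$. For large $\Lambda$ the degrees are unbounded. I would handle this by a first-moment count over sets $S$ with prescribed volume $k$ and prescribed number $j$ of outgoing half-edges. The pairing probability that at most $\alpha k$ half-edges of $S$ leave is at most $\binom{k}{\alpha k}\bigl(k/\mathrm{vol}(V)\bigr)^{(1-\alpha)k/2}$ up to exponential factors. The number of vertex sets of volume $k$ is at most $\binom{N}{k/3}$, since each vertex has degree at least three. Neither bound depends on $\Lambda$; only the minimum degree three and $\mathrm{vol}(V)\ge3N$ enter. Summing over $k$ from $3$ to $\mathrm{vol}(V)/2$ then gives $o(1)$ uniformly. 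The conditioning on an even degree sum costs only a factor that is bounded below uniformly in $\Lambda$, because the parity is asymptotically fair.

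\emph{Resistance bound.} Fix $x\ne y$ and let $\phi$ be the unit current flow from $x$ to $y$. Let $v$ be its voltage, so that $\Delta v=\one_x-\one_y$ and $\Reff(x,y)=\langle \one_x-\one_y,\Delta^\dagger(\one_x-\one_y)\rangle$. Write $\Delta=D(I-P)$, with $D$ the diagonal degree matrix. In $\ell^2(\widehat\pi)$, $I-P$ is self-adjoint with gap at least $\gamma_*$ on mean-zero functions. Put $g=D^{-1}(\one_x-\one_y)$, which has $\widehat\pi$-mean zero. Then
\[
\Reff(x,y)=\mathrm{vol}(V)^{-1}\langle g,(I-P)^{-1}g\rangle_{\widehat\pi}\le\gamma_*^{-1}\mathrm{vol}(V)^{-1}\|g\|_{\widehat\pi}^2 .
\]
Since $\|g\|_{\widehat\pi}^2=\mathrm{vol}(V)^{-1}\bigl(d(x)^{-1}+d(y)^{-1}\bigr)\cdot\mathrm{vol}(V)$, this is exactly $\gamma_*^{-1}(1/d(x)+1/d(y))$. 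This step is a routine normalization check.

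\emph{Averaged bound.} Average \eqref{eq:uniform-spectral-resistance} against $\widehat\pi\otimes\widehat\pi$. The key identity is $\sum_x\widehat\pi(x)/d(x)=N/(N\bar d_N)=1/\bar d_N$. Each of the two terms therefore contributes $1/\bar d_N$, which gives \eqref{eq:uniform-kernel-avg-res}.
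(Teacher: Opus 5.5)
Your overall structure (edge expansion, then Cheeger, then a spectral resistance bound, then averaging) is sound, and the averaging step matches the paper. The step that carries the lemma's content, uniformity of the gap in $\Lambda_N$, has a genuine gap: your first-moment count does not close at sets of linear volume. Write $\ell=\mathrm{vol}(V)$, take all degrees equal to $3$, and take $k=\ell/2=3N/2$. Your count is $\binom{N}{N/2}\approx2^{N}$, while your probability bound is $2^{-(1-\alpha)3N/4}$ times unspecified $e^{O(N)}$ factors. So the union bound diverges unless those factors are computed exactly and shown to be exponentially small, and that computation is the whole difficulty.

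The main term $(k/\ell)^{k/2}$ has the wrong exponential order at linear scale. The probability that $m$ prescribed half-edges are paired among themselves is
\[
p(m)=\frac{(m-1)!!\,(\ell-m-1)!!}{(\ell-1)!!}=\binom{\ell/2}{m/2}\Big/\binom{\ell}{m}\le(\ell+1)\,e^{-\frac{\ell}{2}H(m/\ell)},\qquad H(x)=-x\log x-(1-x)\log(1-x).
\]
With $x=m/\ell$, the factor $e^{-\frac{\ell}{2}(1-x)\log\frac{1}{1-x}}$ that you dropped is exactly what beats the count.

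Your count $\binom{N}{k/3}$ is also not an upper bound in general once $k>3N/2$, and it even vanishes for $k>3N$. That is precisely the large-$\Lambda$ range you meant to cover. Use instead, for $k\le\ell/2$,
\[
\sum_{s\le k/3}\binom{\lfloor\ell/3\rfloor}{s}\le e^{\frac{\ell}{3}H(k/\ell)+o(1)}.
\]
For $x=k/\ell\in[\eta,1/2]$, the contribution is then at most $\mathrm{poly}(\ell)\exp\{-\ell[\frac12H((1-\alpha)x)-\frac13H(x)-xH(\alpha)]\}$. This is $e^{-c_\eta\ell}$ for small $\alpha$, because $\frac12-\frac13>0$. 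For $k\le\eta\ell$, the bounds $p(m)\le(em/\ell)^{m/2}$ and $(e\ell/k)^{k/3}$ give terms of order $(C(k/\ell)^{1/6-\alpha/2})^k$.

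Repaired this way, your argument holds conditionally on \emph{every} even degree sequence with minimum degree three. That is a cleaner route than the paper's. The paper splits at $\Lambda=N^{1/100}$. Below the split, it proves $\max_iD_i\le N^{1/50}$ under the conditioning so that \cite[Lemma~5.3]{bkw} applies. Above it, it uses degree concentration, an Azuma bound $e^{-c_0d(S)}$ for each set, and a union bound that converges because $d(S)\ge\Lambda|S|/2$ is large.

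There are three smaller issues.
\begin{itemize}
\item Your parity claim is false. For small $\Lambda$ one has $D_i=3$ with probability $1-\Theta(\Lambda)$, so for odd $N$ the even-sum event has probability $\Theta(\min\{1,N\Lambda\})$, which can tend to zero. The paper treats exactly this case. The claim becomes irrelevant once expansion is proved conditionally on each degree sequence, so drop it rather than rely on it.
\item The form of \cite[Lemma~5.3]{bkw} used in the paper requires $\max_id_i\le N^{0.02}$, not only minimum degree three, and you never check this. With the sharp count you do not need BKW at all.
\item In the resistance step the correct normalizations are $\Reff(x,y)=\mathrm{vol}(V)\,\langle g,(I-P)^{-1}g\rangle_{\widehat\pi}$ and $\|g\|_{\widehat\pi}^2=\mathrm{vol}(V)^{-1}\bigl(d(x)^{-1}+d(y)^{-1}\bigr)$. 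As written, your two displayed factors multiply to $\gamma_*^{-1}\mathrm{vol}(V)^{-1}\bigl(d(x)^{-1}+d(y)^{-1}\bigr)$, not the claimed bound. The conclusion itself is correct and agrees with the paper's Poincar\'e-plus-Dirichlet-principle argument.
\end{itemize}
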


\begin{proof}
Let $D_\Lambda$ have the law of a $\Poi(\Lambda)$ variable conditioned to be
at least three, and let
\[
        \mathcal P_N:=\left\{\sum_{i=1}^ND_i\ \text{is even}\right\}.
\]
We prove a uniform conductance bound, splitting at
$\Lambda=N^{1/100}$.

Suppose first that $0<\Lambda\le N^{1/100}$. We claim that, under the
conditioning $\mathcal P_N$,
\begin{equation}\label{eq:conditioned-poisson-max-degree}
        \max_{1\le i\le N}D_i\le N^{1/50}
\end{equation}
with probability tending to one, uniformly in this range. For
$1\le\Lambda\le N^{1/100}$, both parities of $D_\Lambda$ have probabilities
bounded away from zero uniformly in $\Lambda$. Hence
$\Pbb(\mathcal P_N)\ge c>0$, while a Poisson Chernoff bound, with
$t=N^{1/50}$, gives
\[
        \Pbb(D_\Lambda\ge t)
        \le C\left(\frac{e\Lambda}{t}\right)^t=o(N^{-1}).
\]
For $0<\Lambda<1$, write $D_\Lambda=3+Y_\Lambda$. Directly from the
conditioned-Poisson probabilities,
\[
        c\Lambda\le\Pbb(Y_\Lambda\ \text{is odd})\le C\Lambda,
        \qquad
        \Pbb(D_\Lambda\ge t)\le \frac{C\Lambda^{t-3}}{t!},
        \quad t\ge4.
\]
Since $\mathcal P_N$ prescribes one of the two parities of
$\sum_iY_i$, the parity formula
\[
 \Pbb\left(\sum_iY_i\equiv b\pmod2\right)
 =\frac{1+(-1)^b(\E(-1)^{Y_\Lambda})^N}{2}
\]
shows that the prescribed parity has probability at least
$c\min\{1,N\Lambda\}$; the more likely parity has probability at least
$1/2$. Consequently,
\[
 \Pbb\left(\max_iD_i\ge t\,\middle|\,\mathcal P_N\right)
 \le
 \frac{CN\Lambda^{t-3}/t!}{\min\{1,N\Lambda\}}=o(1)
\]
uniformly for $0<\Lambda<1$. This proves
\eqref{eq:conditioned-poisson-max-degree}.

Conditional on any degree sequence in
\eqref{eq:conditioned-poisson-max-degree}, the uniform form of
Benjamini--Kozma--Wormald \cite[Lemma~5.3]{bkw} applies: it covers every even
degree sequence satisfying
\[
        3\le\min_i d_i\le\max_i d_i\le N^{0.02}.
\]
It follows that the degree conductance of $\Kernel_N$ is at least an absolute
constant $h_0>0$ with probability tending to one, uniformly for
$\Lambda\le N^{1/100}$.

Suppose now that $\Lambda>N^{1/100}$. Poisson Chernoff bounds give
\begin{equation}\label{eq:large-lambda-degree-concentration}
        \frac{\Lambda}{2}\le D_i\le2\Lambda
        \qquad(1\le i\le N)
\end{equation}
with probability $1-o(1)$, uniformly in this range. Conditioning first on
$D_i\ge3$ and then on $\mathcal P_N$ does not change this conclusion:
$\Pbb(\Poi(\Lambda)<3)=e^{-\Omega(\Lambda)}$ and
$|\E(-1)^{D_\Lambda}|=e^{-\Omega(\Lambda)}$, so
$\Pbb(\mathcal P_N)=1/2+o(1)$.

Fix an even degree sequence satisfying
\eqref{eq:large-lambda-degree-concentration}, write
$\ell=\sum_i d_i$, and expose a uniform pairing of its $\ell$ half-edges.
For $S\subseteq[N]$, put $s=d(S):=\sum_{i\in S}d_i$ and let $B(S)$ be the
number of paired edges with exactly one endpoint in $S$. If $s\le\ell/2$,
then
\[
        \E[B(S)\mid(d_i)]
        =\frac{s(\ell-s)}{\ell-1}\ge\frac{s}{2}.
\]
Exposing successively the partners of half-edges incident to $S$ gives a
Doob martingale with at most $s$ steps and increments bounded by two. Indeed,
changing one revealed partner by a two-pair switching changes the final value
of $B(S)$ by at most two. Azuma's inequality therefore gives an absolute
$c_0>0$ such that
\begin{equation}\label{eq:pairing-boundary-tail}
 \Pbb\left(B(S)\le\frac{s}{4}\,\middle|\,(d_i)\right)
 \le e^{-c_0s}.
\end{equation}
Since $s\ge(\Lambda/2)|S|$, a union bound yields
\[
 \begin{aligned}
 &\Pbb\left(\exists S:\ d(S)\le\ell/2,\
                   B(S)<\frac14d(S)\,\middle|\,(d_i)\right)\\
 &\hspace{2em}\le \sum_{k=1}^N\binom Nk e^{-c_0\Lambda k/2}
 \le \exp\{Ne^{-c_0\Lambda/2}\}-1=o(1).
 \end{aligned}
\]
Thus the degree conductance is at least $1/4$ uniformly in the second range.
Combining the two ranges and using Cheeger's inequality gives
\[
        \lambda_2(I-P_{\Kernel_N})\ge
        \gamma_*:=\frac12\min\{h_0,1/4\}^2
\]
with the asserted uniform probability.

On this event, for every function $f$ and its degree-weighted mean $\bar f_d$,
\[
 \sum_{uv\in E(\Kernel_N)}(f(u)-f(v))^2
 \ge\gamma_*\sum_zd_{\Kernel_N}(z)(f(z)-\bar f_d)^2.
\]
Since
\[
 (f(x)-f(y))^2
 \le\left(\frac1{d_{\Kernel_N}(x)}+
           \frac1{d_{\Kernel_N}(y)}\right)
      \sum_zd_{\Kernel_N}(z)(f(z)-\bar f_d)^2,
\]
the Dirichlet principle gives
\eqref{eq:uniform-spectral-resistance}. Finally, averaging this bound under
$\widehat\pi\otimes\widehat\pi$ gives
\[
 \sum_{x,y}\widehat\pi(x)\widehat\pi(y)\Reff^{\Kernel_N}(x,y)
 \le\frac{2}{\gamma_*}\sum_x\frac{\widehat\pi(x)}
                                      {d_{\Kernel_N}(x)}
 =\frac{2}{\gamma_*\bar d_N},
\]
which is \eqref{eq:uniform-kernel-avg-res}.
\end{proof}
\begin{lemma}[Kernel expansion and stretch inputs]\label{lem:kernel-inputs}
With high probability, for every fixed $r$,
\begin{equation}\label{eq:degree-moments}
        \sum_{a\in V(\Kernel)}d_{\Kernel}(a)^r=O_r(N),
\end{equation}
and consequently $\sum_{z\in V(\Core)}d_{\Core}(z)^r=O_r(N_c)$ for every fixed
$r$. Let $P_{\Kernel}$ be the discrete-time simple-random-walk kernel on
$\Kernel$. The smallest nonzero eigenvalue of $I-P_{\Kernel}$, acting on
$L^2$ of the degree-stationary law, is bounded below by an absolute constant.
In particular, if $\Delta_{\Kernel}$ is the unnormalized graph Laplacian, then
\[
        f^{\mathsf T}\Delta_{\Kernel}f
        =\sum_{ab\in E(\Kernel)}(f(a)-f(b))^2
        \ge c\sum_{a\in V(\Kernel)}f(a)^2
\]
whenever $\sum_a f(a)=0$. Thus every pair of kernel vertices has
effective resistance $O(1)$.

If $L_e$ denotes the stretch of a kernel edge, then for every fixed $r$,
\begin{equation}\label{eq:stretch-moments}
        \sum_{e\in E(\Kernel)}L_e^r=O_r(N\eps^{-r}).
\end{equation}
In addition, for every fixed $r$ and every kernel-measurable choice of edge
weights $q_e$, fixed after conditioning on the kernel and independent of the
stretch lengths, with $0\le q_e\le1$ and $\sum_e q_e=O(N)$,
\[
        \sum_e L_e^r q_e=O_r(N\eps^{-r})
\]
with high probability over the stretches; the same remains true after
multiplying $q_e$ by any fixed-degree polynomial in the endpoint degrees.
\end{lemma}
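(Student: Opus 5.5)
We split the proof of Lemma~\ref{lem:kernel-inputs} into three parts: degree moments, kernel expansion, and stretch sums. In each part we condition on the DKLP parameter $\Lambda$ and use only $\Lambda\asymp\eps$ on a high-probability event, since $\Lambda=(1+\eps-\mu)+O_{\Pbb}((\eps n)^{-1/2})$ and $(\eps n)^{-1/2}=o(\eps)$ when $\eps^3n\to\infty$.

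\emph{Degree moments.} Conditionally on $\Lambda$, the kernel degrees are the values $D_i\ge3$ among $n$ i.i.d.\ $\Poi(\Lambda)$ variables. Given $N$, they are i.i.d.\ $\Poi(\Lambda)$ conditioned on $\ge3$, up to the parity conditioning, which has probability bounded below and so only costs a constant factor in tail bounds. For $\Lambda\le1$ a conditioned Poisson with $D\ge3$ has $\E D^r=O_r(1)$ uniformly, and the same holds for $\E D^{2r}$. Chebyshev then gives $\sum_a d_{\Kernel}(a)^r=O_r(N)$ with high probability, because $N=\Theta(\eps^3n)\to\infty$ by Lemma~\ref{lem:standard}.

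The core statement follows directly. Core vertices are kernel vertices, with the same degree, together with stretched degree-two vertices. The number of the latter is at most $N_c$, so $\sum_z d_{\Core}(z)^r\le\sum_a d_{\Kernel}(a)^r+2^rN_c=O_r(N_c)$.

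\emph{Spectral gap and resistance.} This is the case $\Lambda_N=\Lambda$ of Lemma~\ref{lem:uniform-conditioned-poisson-expansion}, applied conditionally on $N$; that lemma is uniform in the parameter, and $N\to\infty$. The unnormalized bound follows from the Rayleigh quotient comparison
\[
\sum_{ab\in E(\Kernel)}(f(a)-f(b))^2\ge\gamma_*\sum_a d(a)(f(a)-\bar f_d)^2\ge\gamma_*\sum_a d(a)f(a)^2\cdot\tfrac{1}{1}\ge 3\gamma_*\sum_a f(a)^2 .
\]
The middle step needs some care. When $\sum_a f(a)=0$, one uses that $\sum_a d(a)(f(a)-c)^2$ is minimized over constants $c$ at $c=\bar f_d$, and that $\sum_a d(a)(f(a)-c)^2\ge 3\sum_a(f(a)-c)^2\ge3\sum_a f(a)^2$, the last inequality because the mean of $f$ is $0$. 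The resistance bound $O(1)$ is \eqref{eq:uniform-spectral-resistance} with all degrees at least $3$.

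\emph{Stretch sums.} Conditionally on the kernel, the $L_e$ are i.i.d.\ geometric with parameter $1-\mu\asymp\eps$, so $\E L_e^r=O_r(\eps^{-r})$ and $\Var(L_e^r)\le\E L_e^{2r}=O_r(\eps^{-2r})$. The number of kernel edges is $\tfrac12\sum_a d(a)=O(N)$. Given weights $q_e$ with $0\le q_e\le1$ that are independent of the stretches, $\sum_e L_e^rq_e$ has conditional mean $O_r(\eps^{-r}\sum_e q_e)$ and conditional variance $O_r(\eps^{-2r}\sum_e q_e^2)$. Chebyshev gives the bound $O_r(N\eps^{-r})$ with high probability. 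The unweighted case is $q_e=1$.

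For polynomial degree multipliers, take $q_e'=q_e\,p(d(a),d(b))$. Its mean is bounded by $O(\sum_a d(a)^{k+1})=O(N)$ using the degree moments, and its second moment $\sum_e q_e'^2$ is likewise $O(N)$. Chebyshev applies as before.

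\emph{Main obstacle.} The delicate point is uniformity in $\Lambda$ and the parity conditioning. The expansion step carries this burden, and it is already handled by Lemma~\ref{lem:uniform-conditioned-poisson-expansion}. Here it only remains to check that conditioning on $N$ preserves the i.i.d.\ conditioned-Poisson structure.
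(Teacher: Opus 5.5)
Your proposal is correct and follows the paper's proof essentially step for step: Poisson-tail concentration for the kernel degree moments, the degree-two stretched vertices for the core moments, Lemma~\ref{lem:uniform-conditioned-poisson-expansion} applied conditionally on $(\Lambda,N)$ for the gap, and conditional geometric means and variances with Chebyshev for the weighted stretch sums; reading the $O(1)$ resistance off \eqref{eq:uniform-spectral-resistance} with $d\ge3$ is an equivalent substitute for the paper's bound $2/\lambda_2(\Delta_{\Kernel})$. Two small repairs: first, the middle inequality of your display is backwards, since $\bar f_d$ minimizes $c\mapsto\sum_a d(a)(f(a)-c)^2$, so keep only the order in your prose, which is the paper's, $\sum_a d(a)(f(a)-\bar f_d)^2\ge3\sum_a(f(a)-\bar f_d)^2=3\bigl(\sum_a f(a)^2+N\bar f_d^{\,2}\bigr)\ge3\sum_a f(a)^2$; second, run the ``parity costs a constant factor'' step on the $n$-variable Poisson sample before parity conditioning, where that event has probability at least $1/2$, because conditionally on an odd $N$ it can have probability of order $N\eps\asymp\eps^4n$, which need not stay bounded below.
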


\begin{proof}
The kernel degree-moment bounds follow from Poisson tails and concentration of
the number of vertices of degree at least three in the DKLP construction. The
core-degree bounds follow because non-kernel core vertices have degree two.

The expansion input follows from
Lemma~\ref{lem:uniform-conditioned-poisson-expansion}, applied conditionally on
the DKLP parameter and the retained kernel size $(\Lambda,N)$. The kernel degree
law is a Poisson law conditioned to
be at least three, and the high-probability range of the random $\Lambda$ is
covered by the uniform statement of that lemma. To obtain the displayed
unnormalized inequality, let $\bar f_d$ be the degree-weighted mean of $f$. The
normalized gap gives
\[
 f^{\mathsf T}\Delta_{\Kernel}f
 \ge c\sum_a d_{\Kernel}(a)(f(a)-\bar f_d)^2
 \ge 3c\sum_a(f(a)-\bar f_d)^2.
\]
If $\sum_a f(a)=0$, then
$\sum_a(f(a)-\bar f_d)^2=\sum_a f(a)^2+N\bar f_d^{\,2}$, which is at least
$\sum_a f(a)^2$. Hence the second eigenvalue of $\Delta_{\Kernel}$ is bounded below
by an absolute constant. Since
$\Reff^{\Kernel}(a,b)=(e_a-e_b)^{\mathsf T}\Delta_{\Kernel}^{\dagger}(e_a-e_b)$,
the operator bound
$\Delta_{\Kernel}^{\dagger}\preceq\lambda_2(\Delta_{\Kernel})^{-1}I$ on the mean-zero
subspace yields $\Reff^{\Kernel}(a,b)\le2/\lambda_2(\Delta_{\Kernel})=O(1)$.

The stretch-moment bounds come from independence and the geometric tails of the
stretch lengths. Conditional on the kernel,
$\Egr L_e^r=O_r(\eps^{-r})$ and $\Vargr(L_e^r)=O_r(\eps^{-2r})$. Chebyshev's
inequality and \eqref{eq:degree-moments} give \eqref{eq:stretch-moments}. The
weighted estimates use the same conditional expectation and variance calculation,
with the polynomial weights controlled by \eqref{eq:degree-moments}.
\end{proof}

\begin{lemma}[Attached-tree mass inputs]\label{lem:tree-mass-inputs}
For the tree attached to a core vertex $z$, the random mass
$W_z=\sum_{u:r(u)=z}d(u)$ satisfies, conditionally on $\Core$,
\begin{equation}\label{eq:W-moments}
        \bar W_z:=\Egr[W_z\mid\Core]\le C(\eps^{-1}+d_{\Core}(z)),
        \qquad
        \Vargr(W_z\mid\Core)=O(\eps^{-3}),
\end{equation}
and
\begin{equation}\label{eq:W-fourth}
        \Egr\bigl[(W_z-\bar W_z)^4\mid\Core\bigr]=O(\eps^{-7}).
\end{equation}
Also
\begin{equation}\label{eq:S-size}
        S=\sum_zW_z=2|E(H)|=\Theta(N_c/\eps)=\Theta(\eps n)
\end{equation}
with high probability.
\end{lemma}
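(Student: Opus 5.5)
The plan is to obtain the conditional moment bounds \eqref{eq:W-moments} and \eqref{eq:W-fourth} from Lemma~\ref{lem:pgw-moments} with $\delta=1-\mu$, and then prove \eqref{eq:S-size} by a second-moment concentration argument conditional on the core.

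\textbf{Step 1: conditional moments.} In the DKLP construction, conditional on $\Core$, the tree attached at $z$ is an independent $\PGW(\mu)$ tree. Its root has degree $d_{\Core}(z)$ plus its number of children, so $W_z=d_{\Core}(z)+2(T_z-1)$. By \eqref{eq:mu-conjugate} we have $1-\mu\asymp\eps$, with absolute constants since $\eps\to0$. Lemma~\ref{lem:pgw-moments} then gives:
\begin{itemize}
\item $\bar W_z=d_{\Core}(z)+2(\delta^{-1}-1)\le C(\eps^{-1}+d_{\Core}(z))$;
\item $\Vargr(W_z\mid\Core)=4\Var(T)=O(\eps^{-3})$;
\item a centered fourth moment of $16\,\E[(T-\E T)^4]=O(\eps^{-7})$.
\end{itemize}
This step is routine.

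\textbf{Step 2: first moment of $S$.} The identity $S=\sum_zW_z=2|E(H)|$ holds because every vertex lies in exactly one rooted component $r^{-1}(z)$, and degrees sum to twice the edge count. Conditionally on $\Core$, the $W_z$ are independent, so
\[
\Egr[S\mid\Core]=\sum_z d_{\Core}(z)+2N_c(\delta^{-1}-1)=2|E(\Core)|+2N_c(\delta^{-1}-1).
\]
By Lemma~\ref{lem:kernel-inputs}, $|E(\Core)|=O(N_c)$, so this conditional mean is $\Theta(N_c/\eps)$; the lower bound comes from the second term alone.

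\textbf{Step 3: concentration of $S$.} The conditional variance is $N_c\cdot O(\eps^{-3})$. Chebyshev's inequality gives a relative error of order
\[
\frac{(N_c\eps^{-3})^{1/2}}{N_c\eps^{-1}}=(N_c\eps)^{-1/2}.
\]
By \eqref{eq:core-size-explicit}, $N_c\eps\asymp\eps^3n\to\infty$, so this relative error tends to zero. Hence $S=\Theta(N_c/\eps)$ with high probability. Combining with \eqref{eq:core-size-explicit} gives $S=\Theta(\eps n)$. This matches the edge count $|E(H)|=\Theta(\eps n)$ already recorded in Lemma~\ref{lem:standard}, which serves as a consistency check.

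\textbf{Main obstacle.} The only delicate point is keeping the constants absolute near $\mu\to1$. This requires that $1-\mu\ge c\eps$ and $1-\mu\le C\eps$ hold uniformly, which follows from expanding $\mu e^{-\mu}=(1+\eps)e^{-(1+\eps)}$ around $1$, giving $1-\mu=\eps+O(\eps^2)$. It also requires that the conditioning on $\Lambda>0$ and on the core does not alter the tree law; this holds by construction.
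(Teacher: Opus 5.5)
Your proof is correct and follows the paper's argument. Conditional on $\Core$, you apply Lemma~\ref{lem:pgw-moments} through the identity $W_z=d_{\Core}(z)+2(T_z-1)$ with $\delta=1-\mu\asymp\eps$, and you obtain concentration of $S$ by Chebyshev's inequality, using independence of the $W_z$ and $N_c\eps\asymp\eps^3n\to\infty$. You also spell out the conditional mean of $S$ and the use of the core degree bound from Lemma~\ref{lem:kernel-inputs}, both of which the paper only sketches.
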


\begin{proof}
Condition on the core $\Core$. The attached $\PGW(\mu)$ trees at different
core vertices remain independent. Applying Lemma~\ref{lem:pgw-moments} with
$\delta=1-\mu\asymp\eps$ gives \eqref{eq:W-moments} and
\eqref{eq:W-fourth}; the deterministic root contribution is controlled by the
core degree moments in Lemma~\ref{lem:kernel-inputs}. Summing the independent
masses $W_z$ and using $N_c\eps=\Theta(\eps^3n)\to\infty$ gives the
concentration of $S$.
\end{proof}

\begin{lemma}[Weighted core resistance]\label{lem:core-resistance}
There is an absolute constant $C_{\rm cr}<\infty$ such that, with high probability in
the DKLP model,
\begin{equation}\label{eq:core-res-input}
        \sum_{z,w\in V(\Core)}\nu(z)\nu(w)\Reff^{\Core}(z,w)
        \le C_{\rm cr}\eps^{-1}.
\end{equation}
The constant $C_{\rm cr}$ is independent of $n$ and $\eps$.
\end{lemma}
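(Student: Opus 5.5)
The plan is to reduce the core resistance $\Reff^{\Core}(z,w)$ to kernel resistance plus stretch contributions, and then to average against $\nu\otimes\nu$ using the independence of the attached trees from the core.

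\emph{Step 1: deterministic bound.} For a core vertex $z$ lying on the stretched path of a kernel edge $e=ab$ of length $L_e$, put $\kappa(z)=a$ and $\ell(z)=L_e$. If $z$ is itself a kernel vertex, put $\kappa(z)=z$ and $\ell(z)=0$. By the triangle inequality for effective resistance and Rayleigh monotonicity,
\[
        \Reff^{\Core}(z,w)\le \ell(z)+\ell(w)+\Reff^{\Core}(\kappa(z),\kappa(w)),
\]
since the resistance from $z$ to an endpoint of its path is at most the path length. Between kernel vertices, $\Reff^{\Core}(a,b)$ is the resistance of the kernel network in which edge $e$ has resistance $L_e$.

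\emph{Step 2: kernel resistance with stretched edges.} Use a unit flow from $a$ to $b$ in $\Kernel$. The Thomson energy with weights $L_e$ is $\sum_e L_e\theta_e^2$. I would take $\theta$ to be the unit-resistance electrical flow from Lemma~\ref{lem:uniform-conditioned-poisson-expansion}. By the spectral bound, the averaged energy satisfies
\[
        \sum_{a,b}\widehat\pi(a)\widehat\pi(b)\sum_e L_e\theta^{ab}_e{}^2
        =\sum_e L_e q_e,
\]
where $q_e$ is kernel-measurable and $\sum_e q_e=O(1)$ by \eqref{eq:uniform-kernel-avg-res}. Rescale so that it fits the weighted statement of Lemma~\ref{lem:kernel-inputs}, applied with $r=1$ and weights $Nq_e$ after truncation at one; edges with $q_e>1/N$ need separate control. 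The weighted stretch estimate then gives an $O(\eps^{-1})$ average.

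\emph{Step 3: from $\nu$ to $\widehat\pi$.} The measure $\nu$ differs from $\widehat\pi$ in two ways. First, mass sits on stretch vertices, with total weight per kernel edge about $L_e\eps^{-1}$. Second, there are tree fluctuations. Pushing $\nu$ forward by $\kappa$ gives a kernel measure $\nu_\kappa(a)\approx S^{-1}\sum_{e\ni a}L_e\bar W$. The term $\sum_z\nu(z)\ell(z)\lesssim S^{-1}\sum_e L_e^2\eps^{-1}=O(N\eps^{-3}/S)=O(\eps^{-1})$ by \eqref{eq:stretch-moments} and \eqref{eq:S-size}, once $\bar W_z$ is replaced by $W_z$. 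This replacement uses Chebyshev, with the variance bound \eqref{eq:W-moments} and independence.

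The main obstacle is that $\nu_\kappa$ is not comparable to $\widehat\pi$ pointwise: it weights kernel vertices by their incident stretch lengths, which are random. I would handle this using $\Reff^{\Kernel}(a,b)\le\gamma_*^{-1}(d(a)^{-1}+d(b)^{-1})$ applied edge-wise rather than in averaged form. That is, bound the stretched resistance between $a$ and $b$ by the sum over $e$ of $L_e$ times the squared flow through $e$ for the flow from $a$ to the degree-stationary measure. This gives $\sum_e L_e\theta^{a}_e{}^2$ with $\theta^a$ kernel-measurable. Then average over $a$ with weights $\nu_\kappa(a)$, which are polynomial in degrees times stretches. To decouple the stretch weights from the stretches in the resistance, I would bound by Cauchy--Schwarz using the fourth-moment and higher stretch-moment bounds of Lemma~\ref{lem:kernel-inputs}. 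If this cross term resists, the fallback is to split edges incident to $a$ from the rest, where the dependence lives only on incident edges.
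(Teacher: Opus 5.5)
You have the right architecture, and it matches the paper's. Series-reduce $\Core$ to the stretched kernel $\Kernel^L$ plus endpoint terms. Use unweighted-kernel electrical currents, which are kernel-measurable, as Thomson test flows, so that the stretches enter linearly with coefficients independent of them. Then decouple the stretch-dependent weights by Cauchy--Schwarz. Your Step~3 routing through $\widehat\pi$ is a legitimate variant, but it must be stated correctly. Let $\theta^a$ be the unit current from $a$ to $\widehat\pi$ and $J_a:=\sum_eL_e(\theta^a_e)^2$. The energy of $\theta^a$ alone does not bound $\Reff^{\Kernel^L}(a,b)$; Thomson applied to $\theta^a-\theta^b$ gives $\Reff^{\Kernel^L}(a,b)\le 2J_a+2J_b$. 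The kernel part of the $\nu\otimes\nu$ average is then at most $4\sum_a\nu_\kappa(a)J_a\le 4\bigl(\sum_a\nu_\kappa(a)^2\bigr)^{1/2}\bigl(\sum_aJ_a^2\bigr)^{1/2}$.

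\textbf{The gap.} Everything hinges on a uniform per-edge bound for the aggregated squared currents. You do not prove it, and in Step~2 you explicitly leave it open (``edges with $q_e>1/N$ need separate control''). Without it, the weighted stretch estimate of Lemma~\ref{lem:kernel-inputs} is unavailable, since it needs weights in $[0,1]$ after normalization. Chebyshev concentration is unavailable as well. The trivial bound $|\theta^a_e|\le1$ only gives per-edge weights of order $N$, and Markov's inequality cannot give a with-high-probability bound with an absolute constant. This is exactly the non-routine step of the paper, namely $\sum_{a,b}(\theta^{a,b}_e)^2=2N\,b_e^{\mathsf T}(\Delta_{\Kernel}^{\dagger})^2b_e\le2N/\gamma_0$.

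\textbf{The fix.} In your normalization the bound follows from reciprocity. With $v_e:=\Delta_{\Kernel}^{\dagger}b_e$ one has $\theta^a_e=v_e(a)-\sum_b\widehat\pi(b)v_e(b)$. The degree-weighted gap inequality in the proof of Lemma~\ref{lem:uniform-conditioned-poisson-expansion} then gives
\[
 \sum_a d_{\Kernel}(a)(\theta^a_e)^2\le\gamma_*^{-1}\mathcal D_{\Kernel}(v_e)=\gamma_*^{-1}\,b_e^{\mathsf T}\Delta_{\Kernel}^{\dagger}b_e\le\gamma_*^{-1}.
\]
The same computation gives $q_e\le 2/(\gamma_*N\bar d_N)$ in Step~2, so no truncation is needed there.

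Now put $r_a:=\sum_e(\theta^a_e)^2$. Then $r_a\le(\gamma_*d_{\Kernel}(a))^{-1}$, and Cauchy--Schwarz gives $J_a^2\le r_a\sum_eL_e^2(\theta^a_e)^2$. Hence $\sum_aJ_a^2\le\sum_eL_e^2p_e$, where the kernel-measurable weights $p_e:=\sum_ar_a(\theta^a_e)^2$ satisfy $p_e\le(9\gamma_*^2)^{-1}$ and $\sum_ep_e=\sum_ar_a^2=O(N)$. The weighted estimate with $r=2$ therefore gives $\sum_aJ_a^2=O(N\eps^{-2})$ with high probability. Separately, $\sum_a\nu_\kappa(a)^2=O(1/N)$ follows from $\sum_an_a^2=O(N\eps^{-2})$ and the conditional tree moments. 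The product is $O(\eps^{-1})$, and your $\sum_z\nu(z)\ell(z)=O(\eps^{-1})$ term is fine as written.

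\textbf{Comparison.} Once filled in, your route is slightly leaner than the paper's. Routing through $\widehat\pi$ collapses the double sum to a single index. The kernel part then needs only second stretch moments, and the tree weights enter only through $\sum_a\nu_\kappa(a)^2$. The paper instead averages pairwise currents uniformly. That route needs fourth moments of $\Reff^{\Kernel^L}$ to bound $R_2$, and it must then control the quadratic tree fluctuation $\sum_{z,w}\xi_z\xi_w\Reff^{\Core}(z,w)$ separately. In return, the paper obtains unweighted core-resistance moments first and inserts the tree weights in a single operator-norm step.
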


\begin{proof}
Work on the event supplied by Lemmas~\ref{lem:standard}, \ref{lem:kernel-inputs},
and~\ref{lem:tree-mass-inputs}. On this event the unnormalized kernel Laplacian
$\Delta_{\Kernel}$ satisfies
\begin{equation}\label{eq:kernel-gap-detail}
        f^{\mathsf T}\Delta_{\Kernel}f\ge \gamma_0\sum_{a\in V(\Kernel)}f(a)^2
        \quad\text{whenever }\sum_a f(a)=0,
\end{equation}
for an absolute constant $\gamma_0>0$.  Also, for $r=1,2,3,4$, there are
absolute constants $A_r$ such that
\begin{equation}\label{eq:stretch-sums-detail}
        \sum_{e\in E(\Kernel)} L_e^r\le A_r N\eps^{-r},
        \qquad N:=|V(\Kernel)|.
\end{equation}
The constants $A_r$ come from the conditional geometric moment calculation in
Lemma~\ref{lem:kernel-inputs}. Choose the nearest-kernel projection
$a(z)\in V(\Kernel)$ of a core vertex $z\in V(\Core)$, write
$\delta(z)=\dist_{\Core}(z,a(z))$, and set
$n_a:=|\{z\in V(\Core):a(z)=a\}|$. The same stretch moment bounds give absolute
constants $A_\delta,A_{\delta,2},A_n$ such that
\begin{equation}\label{eq:projection-sums-detail}
        \sum_z\delta(z)\le A_\delta \frac{N_c}{\eps},
        \qquad
        \sum_z\delta(z)^2\le A_{\delta,2}\frac{N_c}{\eps^2},
        \qquad
        \sum_a n_a^2\le A_n \frac{N}{\eps^2},
\end{equation}
where $N_c:=|V(\Core)|$. On a stretched edge of length $L$, the sum of distances
to the nearer endpoint is at most $L^2/2$ and the sum of squared distances is at
most $L^3$; summing over edges and using \eqref{eq:stretch-sums-detail} proves
the first two inequalities. For the third, $n_a\le \sum_{e\sim a}L_e$, and
Cauchy's inequality gives $n_a^2\le d_{\Kernel}(a)\sum_{e\sim a}L_e^2$; the
weighted stretch estimate from Lemma~\ref{lem:kernel-inputs}, with weight
controlled by the kernel degree moment bound, gives the displayed inequality.

Collapse every stretched path to one kernel edge with resistance equal to its
length.  Denote the resulting weighted network by $\Kernel^L$.  For ordered
kernel vertices $a,b$, let $\theta^{a,b}$ be the unit electrical current from
$a$ to $b$ in the unweighted kernel, with $\theta^{a,a}=0$.  Thomson's principle
and Rayleigh monotonicity give
\begin{equation}\label{eq:thomson-kernelL-detail}
        \Reff^{\Kernel^L}(a,b)
        \le \sum_{e\in E(\Kernel)}L_e(\theta_e^{a,b})^2 .
\end{equation}
For $a\ne b$, put $R_{ab}:=\sum_e(\theta_e^{a,b})^2=\Reff^\Kernel(a,b)$.
By \eqref{eq:kernel-gap-detail}, $R_{ab}\le 2\gamma_0^{-1}$.  Jensen's
inequality applied to the probability weights
$(\theta_e^{a,b})^2/R_{ab}$ yields, for $r=1,2,4$,
\begin{equation}\label{eq:jensen-kernelL-detail}
        \Reff^{\Kernel^L}(a,b)^r
        \le \Big(\frac{2}{\gamma_0}\Big)^{r-1}
        \sum_e L_e^r(\theta_e^{a,b})^2 .
\end{equation}
The estimate is trivial when $a=b$.

Now average the squared currents over all ordered source-target pairs. For a
fixed oriented kernel edge $e$, let $b_e$ be its signed incidence vector. The
voltage representation of currents gives
\[
        \theta_e^{a,b}=b_e^{\mathsf T}\Delta_{\Kernel}^{\dagger}(e_a-e_b).
\]
Since
\[
        \sum_{a,b}(e_a-e_b)(e_a-e_b)^{\mathsf T}=2N P_0,
\]
where $P_0$ is orthogonal projection onto the subspace orthogonal to constants,
and since $\Delta_{\Kernel}^{\dagger}$ vanishes on constants,
\begin{align}
        \sum_{a,b}(\theta_e^{a,b})^2
        &=2N b_e^{\mathsf T}(\Delta_{\Kernel}^{\dagger})^2 b_e        \notag\\
        &\le \frac{2N}{\gamma_0}\, b_e^{\mathsf T}\Delta_{\Kernel}^{\dagger} b_e
        \le \frac{2N}{\gamma_0}.                                      \label{eq:current-average-detail}
\end{align}
Here $\gamma_0$ is the absolute spectral-gap constant fixed in
\eqref{eq:kernel-gap-detail}. The last inequality uses that $b_e^{\mathsf T}\Delta_{\Kernel}^{\dagger} b_e$ is the
effective resistance between the endpoints of $e$ in the unweighted kernel, and
is therefore at most the resistance of the edge itself, namely $1$. Combining
\eqref{eq:jensen-kernelL-detail}, \eqref{eq:current-average-detail}, and
\eqref{eq:stretch-sums-detail} gives, for $r=1,2,4$,
\begin{equation}\label{eq:kernelL-moments-detail}
        \sum_{a,b}\Reff^{\Kernel^L}(a,b)^r
        \le B_r N^2\eps^{-r},
        \qquad
        B_r:=\Big(\frac{2}{\gamma_0}\Big)^r A_r .
\end{equation}

For core vertices $z,w$,
\begin{equation}\label{eq:core-to-kernel-detail}
        \Reff^\Core(z,w)
        \le \delta(z)+\Reff^{\Kernel^L}(a(z),a(w))+\delta(w).
\end{equation}
Using \eqref{eq:projection-sums-detail} and \eqref{eq:kernelL-moments-detail},
we get
\begin{align}
        R_1&:=\sum_{z,w}\Reff^\Core(z,w)
        \le C_R^{(1)} N_c^2\eps^{-1},                                  \label{eq:R1-detail}\\
        R_2&:=\sum_{z,w}\Reff^\Core(z,w)^2
        \le C_R^{(2)} N_c^2\eps^{-2},                                  \label{eq:R2-detail}
\end{align}
for absolute constants $C_R^{(1)},C_R^{(2)}$. We spell out the only non-immediate term. For
the kernel part in $R_1$,
\[
        \sum_{z,w}\Reff^{\Kernel^L}(a(z),a(w))
        =\sum_{a,b}n_a n_b\Reff^{\Kernel^L}(a,b)
        \le \Big(\sum_a n_a^2\Big)
             \Big(\sum_{a,b}\Reff^{\Kernel^L}(a,b)^2\Big)^{1/2},
\]
which is at most $A_n B_2^{1/2}N^2\eps^{-3}$. Since
$N_c=\Theta(N/\eps)$ on the present event, this is bounded by an absolute
constant times $N_c^2\eps^{-1}$. The kernel contribution to $R_2$ is the same,
using the fourth moment in \eqref{eq:kernelL-moments-detail}.

We now insert the random tree weights $W_z$. Condition on the core $\Core$ and
write
\[
        \bar W_z:=\Egr[W_z\mid \Core],\qquad \xi_z:=W_z-\bar W_z.
\]
From Lemmas~\ref{lem:pgw-moments}, \ref{lem:kernel-inputs}, and~\ref{lem:tree-mass-inputs}, there are absolute
constants $C_W,C_V,C_4$ such that, conditionally on $\Core$,
\begin{equation}\label{eq:W-constants-detail}
        \sum_z \bar W_z^2\le C_W N_c\eps^{-2},
        \qquad
        \Vargr(\xi_z\mid\Core)\le C_V\eps^{-3},
        \qquad
        \Egr[\xi_z^4\mid\Core]\le C_4\eps^{-7}.
\end{equation}
Also $S=\sum_z W_z$ satisfies $S\ge c_S N_c\eps^{-1}$ with high probability for
an absolute $c_S>0$.

Set
\[
        T:=\sum_{z,w}W_zW_w\Reff^\Core(z,w).
\]
By Cauchy's inequality and \eqref{eq:R2-detail}, the deterministic part satisfies
\begin{equation}\label{eq:det-part-detail}
        \sum_{z,w}\bar W_z\bar W_w\Reff^\Core(z,w)
        \le \Big(\sum_z\bar W_z^2\Big)R_2^{1/2}
        \le C_W \bigl(C_R^{(2)}\bigr)^{1/2}N_c^2\eps^{-3}.
\end{equation}
The linear fluctuation has conditional variance at most
\begin{align}
        \Vargr\!\left(2\sum_z\xi_z\sum_w\bar W_w\Reff^\Core(z,w)\,
        \middle|\,\Core\right)
        &\le 4C_V\eps^{-3}\sum_z\left(\sum_w\bar W_w\Reff^\Core(z,w)\right)^2
            \notag\\
        &\le 4C_V\eps^{-3}\Big(\sum_w\bar W_w^2\Big)R_2              \notag\\
        &\le 4C_V C_W C_R^{(2)} N_c^3\eps^{-7}.                \label{eq:linear-var-detail}
\end{align}
This term is $o(N_c^2\eps^{-3})$ in probability, because
$N_c\eps=\Theta(\eps^3n)\to\infty$. For the quadratic fluctuation, the diagonal
terms vanish because $\Reff^\Core(z,z)=0$, and independence gives
\begin{equation}\label{eq:quadratic-var-detail}
        \Egr\!\left[\left(\sum_{z,w}\xi_z\xi_w\Reff^\Core(z,w)\right)^2
        \middle|\Core\right]
        \le 2C_V^2\eps^{-6}R_2
        \le 2C_V^2C_R^{(2)} N_c^2\eps^{-8}.
\end{equation}
This is $o(N_c^4\eps^{-6})$ as well. Combining
\eqref{eq:det-part-detail}, \eqref{eq:linear-var-detail}, and
\eqref{eq:quadratic-var-detail}, we obtain
\[
        T\le C_T N_c^2\eps^{-3}
\]
with high probability, for an absolute constant $C_T$. Dividing by
$S^2\ge c_S^2N_c^2\eps^{-2}$ gives \eqref{eq:core-res-input} with
$C_{\rm cr}=C_T/c_S^2$.
\end{proof}

\begin{lemma}[Average target time]\label{lem:avg-target}
There is an absolute constant $C_{\rm K}<\infty$ such that, with high probability
in the DKLP model,
\begin{equation}\label{eq:avg-target-input}
        \sum_{v\in V(H)}\pi(v)\E^H_\pi\tau_v\le C_{\rm K} n .
\end{equation}
\end{lemma}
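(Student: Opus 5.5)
We bound the resistance form of the random target time. By Lemma~\ref{lem:ct-commute},
\[
        \Kemeny(H)=|E(H)|\sum_{u,v\in V(H)}\pi(u)\pi(v)\Reff^H(u,v),
\]
and by \eqref{eq:S-size} we have $|E(H)|=\Theta(\eps n)$ with high probability. It therefore suffices to show
\[
        \sum_{u,v}\pi(u)\pi(v)\Reff^H(u,v)=O(\eps^{-1}).
\]
For $u,v\in V(H)$, the walk-free path from $u$ to $r(u)$ inside its attached tree, followed by any core path, followed by the tree path from $r(v)$ to $v$, shows by the triangle inequality for effective resistance that
\[
        \Reff^H(u,v)\le h(u)+\Reff^{H}(r(u),r(v))+h(v)\le h(u)+\Reff^{\Core}(r(u),r(v))+h(v).
\]
The second inequality is Rayleigh monotonicity, since $\Core$ is a subgraph of $H$.

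Average over $\pi\otimes\pi$. Because $r(U)\sim\nu$ when $U\sim\pi$, the middle term contributes exactly
\[
        \sum_{z,w}\nu(z)\nu(w)\Reff^{\Core}(z,w)\le C_{\rm cr}\eps^{-1}
\]
by Lemma~\ref{lem:core-resistance}. The two outer terms each contribute
\[
        \E_\pi h(U)=\frac1S\sum_u d(u)h(u).
\]
Split this sum by root $z$. Inside a tree attached at $z$, a non-root vertex at depth $j$ has degree one plus its number of children. Hence $\sum_{u:r(u)=z}d(u)h(u)\le 2A_z+T_z$, where $A_z$ is the area and $T_z$ the total progeny of the attached $\PGW(\mu)$ tree, as in Lemma~\ref{lem:pgw-moments}. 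More precisely, the sum equals $\sum_j jZ_j+\sum_j jZ_{j+1}\le A_z+A_z$. The root has $h=0$.

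Conditionally on $\Core$, these variables are independent with $\Egr A_z\le C\eps^{-2}$ and $\Egr A_z^2\le C\eps^{-5}$. Therefore
\[
        \sum_z(2A_z+T_z)=O(N_c\eps^{-2})
\]
with high probability. This follows from Chebyshev's inequality: the mean is $O(N_c\eps^{-2})$, and the standard deviation is $O(N_c^{1/2}\eps^{-5/2})$, which is $o(N_c\eps^{-2})$ since $N_c\eps\to\infty$. Dividing by $S\ge c_SN_c\eps^{-1}$ gives $\E_\pi h(U)=O(\eps^{-1})$.

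Summing the three contributions gives $\sum_{u,v}\pi(u)\pi(v)\Reff^H(u,v)\le (C_{\rm cr}+O(1))\eps^{-1}$. Multiplying by $|E(H)|=O(\eps n)$ yields $\Kemeny(H)\le C_{\rm K}n$ with an absolute constant. Finally, the random target lemma identifies the left side of \eqref{eq:avg-target-input} with $\Kemeny(H)$, which proves the statement.

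The main work was already done in Lemma~\ref{lem:core-resistance}. The only remaining point needing care is the concentration of the tree-depth sum. I expect this to be routine given the second-moment bound on the area in Lemma~\ref{lem:pgw-moments}. The constant must not depend on $\eps$, but every bound used above has absolute constants.
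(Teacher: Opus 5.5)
Your proposal is correct and follows the paper's proof essentially step for step: the resistance form of $\Kemeny(H)$, the decomposition $\Reff^H(u,v)\le h(u)+\Reff^{\Core}(r(u),r(v))+h(v)$, Lemma~\ref{lem:core-resistance} for the middle term, the bound $\sum_{u:r(u)=z}d(u)h(u)\le 2A_z$ with Chebyshev concentration of $\sum_z A_z$, division by $S\ge c_SN_c\eps^{-1}$, and finally multiplication by $|E(H)|=O(\eps n)$. The extra $T_z$ in your first bound is unnecessary, as your own exact computation shows, but it is harmless.
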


\begin{proof}
By Lemma~\ref{lem:ct-commute},
\begin{equation}\label{eq:avg-target-res}
        \sum_v\pi(v)\E^H_\pi\tau_v
        =|E(H)|\sum_{u,v}\pi(u)\pi(v)\Reff^H(u,v).
\end{equation}
For $u,v\in V(H)$, follow the path from $u$ to its core root, then through the
core, and then from the root of $v$ to $v$. This gives
\begin{equation}\label{eq:res-decomp-target-detail}
        \Reff^H(u,v)
        \le h(u)+\Reff^\Core(r(u),r(v))+h(v).
\end{equation}
The middle term has $\pi\otimes\pi$ average at most $C_{\rm cr}\eps^{-1}$ by
Lemma~\ref{lem:core-resistance}.

It remains to bound the tree-depth term. For one attached $\PGW(\mu)$ tree, let
$Z_j$ be the number of vertices at height $j$ and set $A:=\sum_{j\ge1}jZ_j$.
Lemma~\ref{lem:pgw-moments} gives $\E A\le C_A\eps^{-2}$ and
$\E A^2\le C_A'\eps^{-5}$ with absolute constants. For a tree rooted at $z$,
the degree-weighted height satisfies
\begin{equation}\label{eq:tree-height-degree-detail}
        \sum_{u:r(u)=z}d_H(u)h(u)\le 2A_z,
\end{equation}
where $A_z$ is the corresponding copy of $A$. To check
\eqref{eq:tree-height-degree-detail}, write $d_H(u)$ for a non-root tree vertex
as $1+$ its number of children; the first contribution is $A_z$, and the
contribution from children is at most $A_z$ after assigning each child edge to
its child.

Summing the independent variables $A_z$ over the $N_c$ core vertices and using
$N_c\eps=\Theta(\eps^3n)\to\infty$ gives, with high probability,
\[
        \sum_z A_z\le C_A'' N_c\eps^{-2}
\]
for an absolute $C_A''$. Since $S=2|E(H)|\ge c_S N_c\eps^{-1}$,
\begin{equation}\label{eq:stationary-height-detail}
        \sum_u\pi(u)h(u)
        =\frac{1}{S}\sum_z\sum_{u:r(u)=z}d_H(u)h(u)
        \le \frac{2C_A''}{c_S}\eps^{-1}.
\end{equation}
Averaging \eqref{eq:res-decomp-target-detail} and using
\eqref{eq:stationary-height-detail} and Lemma~\ref{lem:core-resistance} yields
\[
        \sum_{u,v}\pi(u)\pi(v)\Reff^H(u,v)\le C_R\eps^{-1}
\]
for an absolute $C_R$. Lemma~\ref{lem:standard} gives
$|E(H)|\le C_E\eps n$ with high probability. Substituting in
\eqref{eq:avg-target-res} proves \eqref{eq:avg-target-input} with
$C_{\rm K}:=C_EC_R$.
\end{proof}

\begin{lemma}[Kernel balls at the stretch scale]\label{lem:kernel-ball-exploration}
Let $k=\lfloor\eps^{-1}\rfloor$ and fix $A<\infty$. For a kernel vertex $a$,
let $K_E^{(A)}(a)$ be the number of kernel edges incident to the weighted
kernel ball of radius $Ak$ around $a$, where the weight of a kernel edge is its
stretch length $L_e$. Thus
\[
 K_E^{(A)}(a)
 :=\bigl|\{e\in E(\Kernel):
       e\text{ has an endpoint }b\text{ with }
       \dist_{\Kernel^L}(a,b)\le Ak\}\bigr|.
\]
Let
\[
        K_D^{(A)}(a):=\sum_{b:\,\dist_{\Kernel^L}(a,b)\le Ak}d_\Kernel(b).
\]
For each fixed integer $r\ge1$ there is a constant $C_{A,r}<\infty$, depending
only on $A$ and $r$, such that, with high probability,
\begin{equation}\label{eq:kernel-ball-moments-general}
        \sum_{a\in V(\Kernel)}K_E^{(A)}(a)^r\le C_{A,r}N,
        \qquad
        \sum_{a\in V(\Kernel)}K_D^{(A)}(a)^r\le C_{A,r}N .
\end{equation}
\end{lemma}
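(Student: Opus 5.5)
We bound the kernel-ball moments by exploring the weighted kernel $\Kernel^L$ from $a$ and dominating the exploration by a branching process with bounded mean. Conditional on the kernel degree sequence, a breadth-first (Dijkstra-type) exploration of $\Kernel^L$ from $a$ reveals, at each step, the partner of an unmatched half-edge together with an independent stretch length $L_e\sim\mathrm{Geom}(1-\mu)$. While the explored set is small, the partner is a fresh vertex with size-biased degree law. Its number of further half-edges has mean
\[
\beta:=\frac{\sum_b d_\Kernel(b)(d_\Kernel(b)-1)}{\sum_b d_\Kernel(b)},
\]
which by \eqref{eq:degree-moments} is bounded by an absolute constant. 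Since $1-\mu\asymp\eps$, the probability that a stretch satisfies $L_e\le Ak$ is bounded away from zero. So the relevant object is a branching random walk: each particle has offspring of mean $\beta=O(1)$ and displacements of mean $\asymp\eps^{-1}$.

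The key estimate is the one for the number of particles within distance $Ak$ of the root. The expected number of descendants at generation $j$ with total displacement at most $Ak$ is at most $\beta^j\Pbb(L_1+\dots+L_j\le Ak)$. The sum of $j$ geometrics with mean $\asymp\eps^{-1}$ lies below $Ak\asymp A\eps^{-1}$ with probability at most $(C_A/j)^{j}$ for large $j$, by a Chernoff bound for the Gamma-like tail. Summing over $j$ gives $\E K_D^{(A)}(a)=O_A(1)$. For the $r$-th moment we use the standard many-to-few (or recursive generating-function) argument. For each $j$, $\E\bigl[(\#\text{generation-}j\text{ particles within }Ak)^r\bigr]$ is bounded by a sum over $r$-tuples of genealogical trees. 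This requires $r$-th factorial moments of the size-biased offspring, which are controlled by $\sum_b d_\Kernel(b)^{r+1}=O_r(N)$, again from \eqref{eq:degree-moments}. The displacement factors still give factorial decay in the total number of branch edges. We then write $K_D^{(A)}(a)\le\sum_{\text{explored }b}d_\Kernel(b)$ and $K_E^{(A)}(a)\le K_D^{(A)}(a)$, so both reduce to the same moment bound.

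To pass from the branching process to the configuration model, we couple the exploration with the branching process until the first collision, meaning a half-edge pairs inside the explored set. Up to exploring $m$ half-edges, the fresh-vertex degrees are stochastically dominated by the size-biased law computed from the full degree sequence, after a correction of order $m\max_b d_\Kernel(b)/\sum_b d_\Kernel(b)$. A collision only removes vertices from the ball, so domination suffices and no coupling error needs to be paid for collisions. Truncation is harmless: on the high-probability event $\max_b d_\Kernel(b)\le N^{1/50}$ (as in Lemma~\ref{lem:uniform-conditioned-poisson-expansion}), exploring $m\le N^{1/10}$ half-edges keeps the dominating law within a factor $1+o(1)$. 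The event that more than $N^{1/10}$ half-edges are explored has probability bounded by a high moment, via Markov. This gives, conditionally on the kernel degrees and on the high-probability event from Lemma~\ref{lem:kernel-inputs},
\[
\Egr\Bigl[\sum_aK_D^{(A)}(a)^r\Bigr]\le C'_{A,r}N.
\]
To turn this into a high-probability statement, we apply Markov's inequality to the $2r$-th moment sum together with a second-moment argument. Alternatively, since the bound is linear in $N$ and $N\to\infty$, we use Chebyshev with a variance estimate: balls around distant roots are nearly independent, and pairs of roots whose balls intersect contribute $O(N)$ terms by the same moment bound. This yields $\sum_aK_D^{(A)}(a)^r\le C_{A,r}N$ with high probability.

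The main obstacle is the moment computation for the truncated branching random walk with heavy-ish offspring. Here only the empirical moments \eqref{eq:degree-moments}, rather than a uniform maximum degree bound, must yield $r$-th moments of the ball size uniformly in $\eps$. I would first attempt it by induction on $r$ via the recursion $K=d(\text{root})+\sum_{\text{children }i}\one_{\{L_i\le Ak\}}K_i^{(Ak-L_i)}$. I would use monotonicity in the radius and the strict factorial gain from the geometric displacements to close the induction.
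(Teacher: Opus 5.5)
Your architecture matches the paper's. You dominate the weighted exploration by an age-dependent branching process whose offspring moments come from $\sum_b d_\Kernel(b)^{q+1}=O(N)$. You get uniform-in-$\eps$ decay from the fact that sums of rescaled geometric stretches rarely stay below a bounded horizon. You stop before the half-edge pool is depleted, and then concentrate the average over roots.

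The step that fails as written is the passage from the branching process to the graph. You dominate only the degrees of \emph{fresh} vertices and assert that a collision only removes vertices from the ball. That is true for unweighted breadth-first search, but not for a Dijkstra exploration in which a settled vertex pairs all its half-edges at once. A vertex $v$ can first be discovered through a long stretch from an early-settled vertex, so the particle you attach to it is born after the horizon. It can later be reached by a short collision edge from a vertex settled afterwards, so that $\dist_{\Kernel^L}(a,v)\le Ak$. Then $v$ and the vertices it settles lie in the ball. But the only particle born in time is the collision particle, whose offspring count you never coupled to $d_\Kernel(v)-1$, so the inclusion of the ball in the population born by time $T$ breaks.

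You cannot simply discard collisions either. They occur with probability of order $N^{-1}$, but with only the deterministic bound $K\le CN$ their contribution to a $q$-th moment is $O(N^{q-1})$.

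The paper's fix is to attach a dominating particle to \emph{every} pairing from a settled vertex, collisions included. This is legitimate because, before the pool drops below $\ell_N/2$, the partner half-edge is uniform in the pool. Hence even a previously seen partner's full degree has tail at most $2\Pbb(\widehat D_n\ge m)$. Tickets are processed in order of arrival time, and each vertex is settled by its first, i.e.\ shortest, ticket. Later tickets are kept as dummy particles whose descendants merely overcount. Induction over ticket times then gives an injection of settled vertices into particles born no later.

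Two smaller points. First, the moment bound you flag as the main obstacle is handled in the paper without many-to-few or induction on $r$. Since $Z_{T,j}^q\le N_j^{q-1}Z_{T,j}$ and the ages are independent of the genealogy, one gets $\E Z_{T,j}^q\le \E N_j^q\,\Pbb(S_j\le T)\le e^{\theta T}(C_qq_\theta)^j$. Choosing $\theta$ with $C_qq_\theta<1$ and applying Minkowski finishes, and the same works for the degree volume.

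Second, for the high-probability statement, your Chebyshev route needs the near-independence of explorations from different roots made quantitative, which fails without truncation because the summands are unbounded. The paper truncates each exploration at a volume budget $M$. It shows two truncated explorations from independent uniform roots touch a common degree block with probability $O_M(N^{-1})$, so the truncated average has variance $O_M(N^{-1})$. It then removes the truncation by uniform integrability and Markov's inequality, with $M_N\to\infty$ slowly.
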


\begin{proof}
Condition on a kernel degree sequence satisfying the moment estimates in
Lemma~\ref{lem:kernel-inputs}. Let $\delta_\mu=1-\mu$, so $\delta_\mu\asymp\eps$.
Give each kernel edge the rescaled mark $\xi=\delta_\mu L$, where
$L\sim\operatorname{Geom}(1-\mu)$ on $\{1,2,\ldots\}$. For every $\theta>0$,
\begin{equation}\label{eq:laplace-mark-detail}
        \sup_{0<\delta_\mu<1}\E e^{-\theta\xi}
        =\sup_{0<\delta_\mu<1}
        \frac{\delta_\mu e^{-\theta\delta_\mu}}{1-(1-\delta_\mu)e^{-\theta\delta_\mu}}
        =:q_\theta<1,
\end{equation}
and $q_\theta\downarrow0$ as $\theta\to\infty$.

We dominate the exploration by an age-dependent branching process. Define the
empirical kernel degree law and its forward size-biased version by
\begin{equation}\label{eq:pn-hatpn}
        p_n(j):=\frac1N\bigl|\{a\in V(\Kernel):d_{\Kernel}(a)=j\}\bigr|,
        \qquad j\ge3,
\end{equation}
\begin{equation}\label{eq:hatpn}
        \widehat p_n(m):=
        \frac{(m+1)p_n(m+1)}{\sum_{j\ge3}j p_n(j)},
        \qquad m\ge2 .
\end{equation}
Thus $p_n$ is the law of the degree of a uniform root, while $\widehat p_n$ is
the law of the number of fresh forward half-edges at a new non-root vertex
reached by following a uniformly chosen unmatched half-edge. Let $D_n\sim p_n$
and $\widehat D_n\sim\widehat p_n$. Because the degree sequence has bounded
moments of every fixed order, $D_n$ and $\widehat D_n$ have bounded moments of
every fixed order, uniformly in $n$.

We use the following exploration bound. For the fixed moment orders needed
below, work on the high-probability degree-sequence event from
Lemma~\ref{lem:kernel-inputs} on which, with deterministic constants,
\begin{equation}\label{eq:degree-event-for-exploration}
        3N\le \ell_N:=\sum_a d_{\Kernel}(a)\le C N,
        \qquad
        \sum_a d_{\Kernel}(a)^m\le C_mN
\end{equation}
for every $m$ up to the required finite order. For a desired exponent $q$ in
the claim below, we take these bounds through order $q+2$ and use moment order
$s=q+1$ in the stopping estimate.

\begin{claim}\label{claim:exploration-domination}
Conditional on a degree sequence satisfying
\eqref{eq:degree-event-for-exploration}, for every fixed $A<\infty$ and every
integer $q\ge1$,
\begin{equation}\label{eq:rooted-ball-uniform-moments}
 \sup_n\E\left[K_E^{(A)}(U)^q+K_D^{(A)}(U)^q\right]<\infty,
\end{equation}
where $U$ is a uniform kernel vertex and the expectation is over $U$, the
uniform pairing, and the stretch lengths. The same assertion holds for the
number of half-edges revealed while exploring the weighted ball.
\end{claim}

\begin{proof}[Proof of the claim]
Let $\widetilde D_n$ be the nonnegative integer-valued variable defined by
\begin{equation}\label{eq:dominating-forward-degree}
 \Pbb(\widetilde D_n\ge m)
 :=\min\{1,2\Pbb(\widehat D_n\ge m)\},\qquad m\ge0.
\end{equation}
By \eqref{eq:degree-event-for-exploration}, for every fixed $q$,
\[
 \sup_n\E\bigl[D_n^q+\widehat D_n^q+\widetilde D_n^q\bigr]<\infty.
\]
Indeed,
\[
 \E\widehat D_n^q
 \le \frac{\sum_a d_{\Kernel}(a)^{q+1}}{\ell_N},
 \qquad
 \E\widetilde D_n^q\le2\E\widehat D_n^q,
\]
where the last inequality follows by summing the tail probabilities of the
nonnegative integer-valued variables.

Run Dijkstra exploration from $U$, but stop before initiating any pairing for
which fewer than $\ell_N/2$ possible partners remain. Before that stopping
time, the partner of every paired half-edge is uniform in a pool of size at
least $\ell_N/2$. Consequently, even
if that partner belongs to a vertex encountered earlier, its full degree
satisfies
\[
 \Pbb\bigl(d_{\Kernel}(\text{partner})-1\ge m
       \mid\mathcal F\bigr)
 \le
 \frac{\sum_{a:\,d_{\Kernel}(a)-1\ge m}d_{\Kernel}(a)}{\ell_N/2}
 \le2\Pbb(\widehat D_n\ge m),
\]
where $\mathcal F$ is the exploration history before the pairing.

We use an arrival-ticket version of Dijkstra's algorithm. Whenever a half-edge
at a settled vertex is paired, create a ticket carrying the partner vertex and
the proposed arrival time, whether or not that vertex has appeared before.
Process tickets in increasing order of arrival time, with a fixed rule for
ties. The first ticket reaching a vertex settles it; every later ticket for the
same vertex is declared dummy.
Thus a vertex that first receives a long tentative route and later a shorter
one is settled by the shorter ticket.

Couple these tickets to an age-dependent branching process whose root has
$D_n$ children, whose other individuals have $\widetilde D_n$ children, and
whose edge ages are independent copies of
\[
        \xi_\delta:=\delta L,
        \qquad L\sim\operatorname{Geom}(\delta),
        \qquad \delta=1-\mu.
\]
For each non-root ticket, the preceding conditional tail bound and a sequential
quantile coupling provide a fresh copy of $\widetilde D_n$ that dominates
$d_{\Kernel}(v)-1$ for its partner $v$. These dominating variables may be taken
independent across tickets and independent of the edge ages. If the ticket
settles $v$, the number of still-unmatched half-edges at $v$ is at most
$d_{\Kernel}(v)-1$, so its outgoing tickets can be matched injectively to the
branching-process children, using the same edge ages. If the ticket is dummy,
it creates no graph tickets, while all of its branching-process descendants
are retained and merely overcount.

Induction over the ordered ticket times now shows that every graph ticket is
represented by a distinct branching-process particle born at the same time.
In particular, every vertex settled by time $T$ is represented by a particle
born by time $T$, and its degree is bounded by that particle's full degree.
The number of kernel edges incident to the settled set is at most its total
degree volume, so both $K_E^{(A)}$ and $K_D^{(A)}$ are dominated by the
branching-process degree volume born by time $T$. The number of revealed
half-edges is at most twice that volume. This induction is the point at which
retaining dummy collision tickets is essential.

For every $\theta>0$,
\begin{equation}\label{eq:uniform-age-laplace}
 q_\theta:=\sup_{0<\delta<1}\E e^{-\theta\xi_\delta}
 =\sup_{0<\delta<1}
   \frac{\delta e^{-\theta\delta}}
        {1-(1-\delta)e^{-\theta\delta}}<1,
 \qquad q_\theta\downarrow0
\end{equation}
as $\theta\to\infty$. Let $N_j$ be the size of generation $j$ in the
unmarked branching process. The uniform offspring moments and the usual
branching recursion give, for every fixed integer $q\ge1$,
\begin{equation}\label{eq:generation-moments}
        \E N_j^q\le C_q^j,\qquad j\ge0.
\end{equation}
Let $R_0:=D_n$, and for $j\ge1$ let $R_j$ be the sum of the full degrees
$1+\widetilde D_n$ of the generation-$j$ individuals. The corresponding
estimate is
\[
        \E R_j^q\le C_q^{j+1},\qquad j\ge0.
\]

Let $Z_{T,j}$ be the number of generation-$j$ individuals born by time $T$.
Conditional on the genealogical tree, each root-to-generation-$j$ path has the
same age-sum distribution as $S_j=\xi_1+\cdots+\xi_j$. Since
$Z_{T,j}^q\le N_j^{q-1}Z_{T,j}$,
\[
 \E Z_{T,j}^q
 \le \E N_j^q\,\Pbb(S_j\le T)
 \le e^{\theta T}(C_qq_\theta)^j.
\]
Choose $\theta$ so that $C_qq_\theta<1$. Minkowski's inequality then yields
\begin{equation}\label{eq:age-dependent-moment-detail}
 \sup_n\sup_{0<\delta<1}\E Z_T^q<\infty.
\end{equation}
Replacing $N_j$ by $R_j$ in the same calculation gives the corresponding
bound for the degree volume $Q_T$ of individuals born by time $T$. Indeed, if
$Q_{T,j}$ is the generation-$j$ contribution, then
\[
 Q_{T,j}^q\le R_j^{q-1}
 \sum_{|u|=j}d(u)\one_{\{S(u)\le T\}},
\]
and conditioning on the genealogy gives the same bound.

Since $(1-\mu)k=O(1)$, the weighted radius $Ak$ becomes, after rescaling,
a time horizon $T_A<\infty$ depending only on $A$. Thus the stopped kernel
ball and all half-edges revealed in constructing it are dominated by the
preceding branching-process quantities.

It remains to remove the stopping. If the stopping rule is reached, at least
half of the original half-edge pool has been removed, so the dominating
branching process has degree volume at least
$\ell_N/5\ge3N/5$ for all large $N$, because each initiated pairing removes
at most its active half-edge and its partner. Using the moment bound at order
$s=q+1$ gives
\[
 \Pbb(\text{the stopping time is reached})
 \le \Pbb(Q_{T_A}\ge3N/5)\le C_{A,q}N^{-(q+1)}.
\]
On \eqref{eq:degree-event-for-exploration}, deterministically
$K_E^{(A)}(U)\le\ell_N/2\le CN$ and
$K_D^{(A)}(U)\le\ell_N\le CN$. Therefore
\[
 \E K_E^{(A)}(U)^q+\E K_D^{(A)}(U)^q
 \le C_{A,q}+(CN)^q C_{A,q}N^{-(q+1)}\le C'_{A,q}.
\]
The identical argument bounds the revealed half-edge count. This proves
\eqref{eq:rooted-ball-uniform-moments}.
\end{proof}

In particular, applying the claim with moment order $2r+2$ gives
\begin{equation}\label{eq:root-moment-detail}
        \sup_n\E K_E^{(A)}(U)^{2r+2}<\infty,
        \qquad
        \sup_n\E K_D^{(A)}(U)^{2r+2}<\infty,
\end{equation}
and the corresponding families, as well as the revealed half-edge counts, are
uniformly integrable.
We pass from the rooted estimate to an average over all roots. Put
$Y_a=K_E^{(A)}(a)^r$ or $Y_a=K_D^{(A)}(a)^r$. Let $\mathcal H_a$ be the
total discovered degree volume plus the paired boundary half-edges revealed by
the exploration from $a$. For a budget $M$, stop before accepting a degree
block that would make this quantity exceed $M$, and let
\[
        Y_a^{(M)}:=Y_a\one_{\{\mathcal H_a\le M\}}.
\]
Thus $Y_a^{(M)}\le M^r$. Determining it reveals at most $M+O(1)$ half-edges
and touches at most $M+O(1)$ vertex degree blocks; the additive term accounts
for the rejected boundary block and the final paired partner.

For fixed $M$, consider two such explorations from independent uniform roots.
If $\mathcal V_1$ is the set of degree blocks touched by the first, then
the size-biased partner bound and
$\sum_vd_{\Kernel}(v)^2=O(N)$ give
\[
        \E\left[\sum_{v\in\mathcal V_1}d_{\Kernel}(v)\right]=O_M(1).
\]
The second root lies in $\mathcal V_1$ with probability $O_M(N^{-1})$, and the
second exploration makes only $O_M(1)$ partner selections. Since
$\ell_N\asymp N$, a union bound therefore gives
\[
        \Pbb(\text{the two explorations touch a common degree block})
        =O_M(N^{-1}).
\]
This count includes intersections through different half-edges of the same
vertex, not only literal reuse of an exposed half-edge. On the complementary
event, removing the $O_M(1)$ half-edges used by the first exploration changes
the law of the second by total variation $O_M(N^{-1})$. Consequently,
\[
 \Var\left(\frac1N\sum_aY_a^{(M)}\right)=O_M(N^{-1}).
\]
Claim~\ref{claim:exploration-domination}, used with moment order $2r+2$, shows
that $\{Y_U\}_n$ and $\{\mathcal H_U\}_n$ have uniformly bounded moments of
order strictly larger than one. H\"older's inequality, followed by
Markov's inequality for $\mathcal H_U$, therefore gives
\[
        \sup_n\E\bigl[Y_U\one_{\{\mathcal H_U>M\}}\bigr]
        \longrightarrow0
        \qquad(M\to\infty).
\]
We may therefore choose a
deterministic sequence $M_N\uparrow\infty$ slowly enough that
\[
 \Var\left(\frac1N\sum_aY_a^{(M_N)}\right)=o(1),
 \qquad
 \E\bigl|Y_U-Y_U^{(M_N)}\bigr|=o(1).
\]
The expectations of the truncated averages are bounded uniformly, so the first
relation and Chebyshev's inequality bound each truncated average by a
deterministic constant with probability tending to one. By exchangeability of
the uniform root and Markov's inequality,
\[
 \Pbb\left(\frac1N\sum_a|Y_a-Y_a^{(M_N)}|>1\right)
 \le \E|Y_U-Y_U^{(M_N)}|=o(1).
\]
Thus $N^{-1}\sum_aY_a$ is bounded by a deterministic constant with high
probability. Applying this argument to both choices of $Y_a$ proves
\eqref{eq:kernel-ball-moments-general}.
\end{proof}

\begin{lemma}[Weighted core balls]\label{lem:core-balls}
Let $k=\lfloor\eps^{-1}\rfloor$.  There are absolute constants
$C_{\rm ball},C_{\rm near}<\infty$ such that, with high probability,
\begin{equation}\label{eq:BE-input}
        \sum_{z\in V(\Core)}\nu(z)B_E(z,k+2)
        \le C_{\rm ball} k,
\end{equation}
and
\begin{equation}\label{eq:near-input}
        \sum_{z,w\in V(\Core)}\nu(z)\nu(w)
        \one_{\{\dist_{\Core}(z,w)\le k\}}
        \le \frac{C_{\rm near}}{\eps^3n}=o(1).
\end{equation}
\end{lemma}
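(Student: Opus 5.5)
The plan is to reduce both estimates to the kernel-ball moments of Lemma~\ref{lem:kernel-ball-exploration}, with the random tree masses handled as in the proof of Lemma~\ref{lem:core-resistance}. We work on the high-probability event of Lemmas~\ref{lem:standard}, \ref{lem:kernel-inputs}, \ref{lem:tree-mass-inputs} and~\ref{lem:kernel-ball-exploration}. Write $\nu(z)=W_z/S$ and use $S\ge c_SN_c\eps^{-1}$.

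\emph{Deterministic geometric step.} For a core vertex $z$ on a stretched edge with kernel endpoints, let $a(z)$ be its nearest kernel endpoint. Every core vertex $w$ with $\dist_\Core(z,w)\le k+2$ lies either on the stretched path containing $z$, or on a stretched edge incident to a kernel vertex $b$ with $\dist_{\Kernel^L}(a(z),b)\le 2k+2\le 3k$. In the second case we use $\dist(a(z),z)\le k+2$ whenever the ball leaves $z$'s own path.

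It follows that
\[
B_E(z,k+2)\le 2k+4+\sum_{e\text{ incident to }B_{\Kernel^L}(a(z),3k)}\min(L_e,2k+4).
\]
Here the key point is that only the portion of each edge within distance $k+2$ is counted. Hence $B_E(z,k+2)\le C k\,(1+K_E^{(3)}(a(z)))$. The same containment gives
\[
|\{w:\dist_\Core(z,w)\le k\}|\le Ck(1+K_E^{(3)}(a(z))).
\]

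\emph{Proving \eqref{eq:BE-input}.} We need
\[
\sum_z W_z\,k\,(1+K_E^{(3)}(a(z)))\le Ck\,S.
\]
Conditional on $\Core$, the left side has mean $\sum_z\bar W_z(1+K(a(z)))$. We bound it by Cauchy--Schwarz:
\[
\Big(\sum_z\bar W_z^2\Big)^{1/2}\Big(\sum_z K(a(z))^2\Big)^{1/2},
\]
with $\sum_z K(a(z))^2=\sum_a n_aK(a)^2\le(\sum n_a^2)^{1/2}(\sum_a K(a)^4)^{1/2}\le C N\eps^{-1}$ using \eqref{eq:projection-sums-detail} and \eqref{eq:kernel-ball-moments-general}. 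With $\sum\bar W_z^2\le CN_c\eps^{-2}$ and $N\eps^{-1}\asymp N_c$, the mean is $O(N_c\eps^{-1})=O(S)$. The fluctuation $\sum_z\xi_z(1+K(a(z)))$ has conditional variance at most $C\eps^{-3}\sum_z(1+K(a(z)))^2=O(\eps^{-3}N_c)$. This is $o((N_c/\eps)^2)$ because $N_c\eps\to\infty$. Chebyshev then finishes this estimate.

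\emph{Proving \eqref{eq:near-input}.} We bound
\[
T':=\sum_{z,w}W_zW_w\one\{\dist\le k\}.
\]
The deterministic part is at most
\[
\sum_z\bar W_z\sum_{w\in B(z,k)}\bar W_w\le\Big(\sum_z\bar W_z^2\Big)^{1/2}\Big(\sum_z\Big(\sum_{w\in B(z,k)}\bar W_w\Big)^2\Big)^{1/2}.
\]
We have $\bar W_w\le C(\eps^{-1}+d_\Core(w))$. Moreover $\sum_{w\in B(z,k)}d_\Core(w)\le 2B_E(z,k)$, and $|B(z,k)|\le Ck(1+K(a(z)))$. So the inner sum is $O(\eps^{-2}(1+K(a(z))))$, and the same moment bounds give total $O(N_c^{1/2}\eps^{-1}\cdot N_c^{1/2}\eps^{-2})=O(N_c\eps^{-3})$. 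The linear and quadratic fluctuation terms are handled exactly as in \eqref{eq:linear-var-detail}--\eqref{eq:quadratic-var-detail}, with $\Reff$ replaced by the indicator. The relevant sums of squares of ball masses are controlled by the same kernel-ball moments. Both fluctuation terms are $o(N_c\eps^{-3})$ using $N_c\eps\to\infty$. Dividing by $S^2\asymp N_c^2\eps^{-2}$ gives $O(\eps^{-1}/N_c)=O(1/(\eps^3n))$ by \eqref{eq:core-size-explicit}.

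\emph{Main obstacle.} The delicate step is the geometric containment: bounding core balls of radius $k$ by $k$ times a kernel-ball edge count at weighted radius $O(k)$. The difficulty is that long stretched edges (with $L_e\gg k$) must contribute only $O(k)$ and not $L_e$; truncating by $\min(L_e,2k+4)$ handles this. One must also check that self-loop cycles and the root's own path are covered, and that the kernel-ball moments are applied at the projected root $a(z)$ with multiplicities $n_a$. The latter is why fourth kernel-ball moments, rather than second, are needed in the Cauchy--Schwarz steps.
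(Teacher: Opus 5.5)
Your route is the paper's: bound core balls of radius $k+2$ by $k$ times kernel-ball edge counts at weighted radius $O(k)$, apply Cauchy--Schwarz against fourth kernel-ball moments and $\sum_a n_a^2$, then split the tree masses conditionally on $\Core$ into mean, linear and quadratic parts. Using only the nearest endpoint $a(z)$ and $\sum_a n_aK(a)^2$, instead of both endpoints and $c_a=\sum_{e\sim a}L_e$, is a harmless variant. One arithmetic slip: the kernel-distance bound is $2k+4$, not $2k+2$, though it is still at most $3k$ for small $\eps$. With that, your proof of \eqref{eq:BE-input} is complete.

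The gap is in \eqref{eq:near-input}. The indicator $\one_{\{\dist_\Core(z,w)\le k\}}$ equals $1$ on the diagonal $z=w$, and your deterministic part explicitly includes $w=z$. So the step ``handled exactly as in \eqref{eq:quadratic-var-detail} with $\Reff$ replaced by the indicator'' fails, because that computation used $\Reff^\Core(z,z)=0$ to kill the diagonal.

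Your quadratic term contains $\sum_z\xi_z^2$. Its conditional mean $\sum_z\Vargr(\xi_z\mid\Core)$ is of order $N_c\eps^{-3}$, since $W_z$ contains twice a Borel progeny whose variance is $\asymp\eps^{-3}$. That is the same order as your target. Hence the claim that both fluctuation terms are $o(N_c\eps^{-3})$ is false, and the independence-based second-moment bound does not apply to this part.

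The conclusion survives, but you need an extra step:
\begin{itemize}
\item Restrict the quadratic term to off-diagonal pairs, $A_{zw}=\one_{\{0<\dist_\Core(z,w)\le k\}}$. There your estimate works, with $\sum_{z,w}A_{zw}\le Ck N_c$.
\item Bound the diagonal $\sum_z W_z^2$ separately. Markov's inequality alone only gives tightness, not an absolute constant with high probability.
\end{itemize}
The paper uses the fourth-moment bound \eqref{eq:W-fourth} to get $\Vargr\bigl(\sum_zW_z^2\mid\Core\bigr)\le CN_c\eps^{-7}=o(N_c^2\eps^{-6})$. Chebyshev's inequality then gives $\sum_zW_z^2\le CN_c\eps^{-3}$ with high probability. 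This is exactly where that fourth-moment input is needed, and your proposal never uses it.
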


\begin{proof}
Let $N=|V(\Kernel)|$ and $N_c=|V(\Core)|$. By
Lemma~\ref{lem:kernel-ball-exploration} with $A=4$, and by the stretch moment
estimates in Lemma~\ref{lem:kernel-inputs}, work on an event on which
\begin{equation}\label{eq:KEKD-fourth-detail}
        \sum_a K_E(a)^4\le C_E^{(4)}N,
        \qquad
        \sum_a K_D(a)^4\le C_D^{(4)}N,
\end{equation}
where $K_E,K_D$ denote the radius-$4k$ weighted kernel quantities. Also, with
$c_a:=\sum_{e\sim a}L_e$,
\begin{equation}\label{eq:c-sums-detail}
        \sum_a c_a\le C_c N_c,
        \qquad
        \sum_a c_a^2\le C_c' N\eps^{-2}.
\end{equation}

If $z$ lies on the stretched image of a kernel edge with endpoint kernel
vertices $a,b$ (with $a=b$ for a stretched self-loop), then every core edge
incident to $B_\Core(z,k+2)$ is contained in the part of that stretched edge
within distance $k+2$ of $z$, together with the first $k+2$ core edges along
stretched edges reached from endpoint kernel vertices inside the weighted kernel
ball of radius $4k$. Hence
\begin{equation}\label{eq:BE-local-detail}
        B_E(z,k+2)
        \le C_0 k\bigl(1+K_E(a)+K_E(b)\bigr),
\end{equation}
for an absolute $C_0$. Similarly,
\begin{equation}\label{eq:BD-local-detail}
        B_D(z,k)
        \le C_0 k\bigl(1+K_D(a)+K_D(b)\bigr).
\end{equation}
Squaring \eqref{eq:BE-local-detail}, summing over all core vertices, and using
Cauchy's inequality with \eqref{eq:KEKD-fourth-detail} and
\eqref{eq:c-sums-detail} gives
\begin{equation}\label{eq:BE-square-detail}
        \sum_z B_E(z,k+2)^2\le C_{BE} N_c k^2.
\end{equation}
The same argument gives
\begin{equation}\label{eq:BD-square-detail}
        \sum_z B_D(z,k)^2\le C_{BD} N_c k^2.
\end{equation}
For instance,
\[
        \sum_z B_E(z,k+2)^2
        \le C k^2\sum_a c_a(1+K_E(a)^2)
        \le C k^2\left(\sum_a c_a+\big(\sum_a c_a^2\big)^{1/2}
        \big(\sum_aK_E(a)^4\big)^{1/2}\right),
\]
which is bounded by an absolute constant times $N_ck^2$ because
$N_c=\Theta(N/\eps)$.

We prove \eqref{eq:BE-input}. Conditional on $\Core$, write
$W_z=\bar W_z+\xi_z$ as in the proof of Lemma~\ref{lem:core-resistance}. Then
\[
        \sum_z\bar W_z B_E(z,k+2)
        \le \Big(\sum_z\bar W_z^2\Big)^{1/2}
             \Big(\sum_zB_E(z,k+2)^2\Big)^{1/2}
        \le C N_c\eps^{-2},
\]
using \eqref{eq:W-constants-detail} and \eqref{eq:BE-square-detail}. The
centered part has conditional variance at most
\[
        C_V\eps^{-3}\sum_zB_E(z,k+2)^2
        \le C N_c\eps^{-5},
\]
which is $o(N_c^2\eps^{-4})$ since $N_c\eps\to\infty$. Hence
\[
        \sum_z W_zB_E(z,k+2)\le C N_c\eps^{-2}
\]
with high probability. Dividing by $S\ge c_SN_c\eps^{-1}$ proves
\eqref{eq:BE-input}, because $k\asymp\eps^{-1}$.

We prove \eqref{eq:near-input}. Let
\[
        A_{zw}:=\one_{\{0<\dist_\Core(z,w)\le k\}},
        \qquad
        T_{\rm near}:=\sum_{z,w}W_zW_wA_{zw}.
\]
Since every core vertex has degree at least two,
\begin{equation}\label{eq:Azw-total-detail}
        \sum_{z,w}A_{zw}
        \le \sum_z B_D(z,k).
\end{equation}
Also, from $\bar W_w\le C(\eps^{-1}+d_\Core(w))$ and $\eps<1$,
\[
        M_z:=\sum_w\bar W_wA_{zw}
        \le C\eps^{-1}B_D(z,k).
\]
Thus \eqref{eq:BD-square-detail} gives
\begin{equation}\label{eq:Mz-square-detail}
        \sum_zM_z^2\le C N_c\eps^{-4}.
\end{equation}
The deterministic near-pair contribution is bounded by
\[
        \sum_z\bar W_zM_z
        \le \Big(\sum_z\bar W_z^2\Big)^{1/2}
             \Big(\sum_zM_z^2\Big)^{1/2}
        \le C N_c\eps^{-3}.
\]
The linear fluctuation has conditional variance at most
\[
        4C_V\eps^{-3}\sum_zM_z^2\le C N_c\eps^{-7},
\]
and the quadratic fluctuation has conditional second moment at most
\[
        2C_V^2\eps^{-6}\sum_{z,w}A_{zw}
        \le C N_c\eps^{-7},
\]
where we used \eqref{eq:Azw-total-detail} and Cauchy's inequality with
\eqref{eq:BD-square-detail}. Both fluctuations are $o(N_c\eps^{-3})$ in
probability. Hence
\begin{equation}\label{eq:Tnear-detail}
        T_{\rm near}\le C N_c\eps^{-3}
\end{equation}
with high probability.

The diagonal contribution is separate. By \eqref{eq:W-constants-detail} and
conditional independence,
\[
        \Egr\left[\sum_zW_z^2\mid\Core\right]\le C N_c\eps^{-3},
        \qquad
        \Vargr\left(\sum_zW_z^2\mid\Core\right)\le C N_c\eps^{-7}.
\]
Since $N_c\eps\to\infty$, Chebyshev's inequality gives
\begin{equation}\label{eq:diagonal-weight-detail}
        \sum_zW_z^2\le C N_c\eps^{-3}
\end{equation}
with high probability. Combining \eqref{eq:Tnear-detail} and
\eqref{eq:diagonal-weight-detail} and dividing by
$S^2\ge c_S^2N_c^2\eps^{-2}$ gives
\[
        \sum_{z,w}\nu(z)\nu(w)\one_{\{\dist_\Core(z,w)\le k\}}
        \le \frac{C}{N_c\eps}.
\]
Since $N_c\ge c_{\rm core}\eps^2n$ on the DKLP structural event, the right side is at
most $C_{\rm near}/(\eps^3n)$. This proves \eqref{eq:near-input}.
\end{proof}

\section{Proof of Lemma~\ref{lem:dklp-meeting}}\label{sec:proof}

We prove the lemma on the intersection of the high-probability events in
Lemmas~\ref{lem:standard}, \ref{lem:avg-target}, and~\ref{lem:core-balls}. After
enlarging constants if needed, all constants below are deterministic on this
event.

\subsection*{Upper bound}

By Lemma~\ref{lem:avg-target}, on the present event
\[
        \Kemeny(H)=\sum_v\pi(v)\E^H_\pi\tau_v\le C_0n
\]
for an absolute constant $C_0$. Lemma~\ref{lem:aldous-meeting} and the mixing
estimate \eqref{eq:mixing} give
\[
        \max_{x,y\in V(H)}\E^H_{x,y}\meet
        \le K'_A\bigl(\tmix(H)+\Kemeny(H)\bigr)
        \le Cn,
\]
since $\tmix(H)=o(n)$. This proves the upper bound.

\subsection*{Lower bound}

We prove the lower bound from stationarity. The test function sees only the two
projected roots in the core $\Core$. Motion inside attached trees is invisible.

Let $\cE_H$ denote the Dirichlet form of the independent two-walk chain on the
fixed graph $H$. Since each walk waits an exponential time of mean one, the
transition rate from $(x,y)$ to $(x',y)$ is $1/d(x)$ when $x'\sim x$, and
similarly for moves of the second coordinate. Thus, for a function $F$ on
$V(H)^2$,
\begin{equation}\label{eq:dirichlet-form}
\begin{aligned}
        \cE_H(F,F)
        =\frac12\sum_{x,y}\pi(x)\pi(y)
        \Bigg[&\sum_{x'\sim x}\frac{\bigl(F(x,y)-F(x',y)\bigr)^2}{d(x)} \\
              &+\sum_{y'\sim y}\frac{\bigl(F(x,y)-F(x,y')\bigr)^2}{d(y)}\Bigg].
\end{aligned}
\end{equation}

Take $\mathbf{B}=\Diag:=\{(x,x):x\in V(H)\}$. Set $k=\lfloor\eps^{-1}\rfloor$. Define
the core-closeness score by
\begin{equation}\label{eq:test-F}
        F(x,y):=\max\left\{1-\frac{\dist_{\Core}(r(x),r(y))}{k},\,0\right\}.
\end{equation}
This function is $1$ when the walkers meet. It may also be $1$ for two distinct
vertices in the same attached tree, but that only enlarges the test set and does
not hurt the bound. Such pairs are covered by the near-projection estimate
below. The score is supported on pairs whose core projections are within
distance $k$, and it is $1/k$-Lipschitz in each core coordinate.

The test function has small stationary mean. Indeed,
\begin{equation}\label{eq:F-mean}
        \E^H_{\pi\otimes\pi} F
        \le \sum_{z,w\in V(\Core)}\nu(z)\nu(w)
        \one_{\{\dist_{\Core}(z,w)\le k\}}
        =o(1)
\end{equation}
by \eqref{eq:near-input}. Two independent stationary projections to the core
are unlikely to be within distance $O(\eps^{-1})$.

The test function also has small Dirichlet energy. The function $F$ is constant
inside each attached tree, and it is unchanged by a jump between a tree root and
a tree vertex. Only jumps along edges of the core $\Core$ can change $F$. If
one walker moves across a core edge $zz'$ and the other walker has core
projection $w$, then
\[
        |F(z,w)-F(z',w)|\le k^{-1}.
\]
This difference can be nonzero only when the edge $zz'$ has an endpoint within
core distance $k+2$ of $w$. Indeed, if the function changes, at least one of
$z$ and $z'$ lies within distance $k$ of $w$. Since $z$ and $z'$ are adjacent,
this edge is counted by $B_E(w,k+2)$.

Substituting the preceding Lipschitz and support estimates into \eqref{eq:dirichlet-form} gives the following bookkeeping. For a first-coordinate
jump across $x\sim x'$ while the second coordinate is $y$, the weight before the
outside factor $1/2$ is
\[
        \pi(x)\pi(y)\frac1{d(x)}=\frac{\pi(y)}{2|E(H)|}.
\]
If the second coordinate has core projection $w$, summing over all such $y$
gives
\[
        \sum_{r(y)=w}\frac{\pi(y)}{2|E(H)|}
        =\frac{\nu(w)}{2|E(H)|}.
\]
Pairing the two orientations of an undirected core edge cancels the factor
$1/2$ in the Dirichlet form. The second coordinate contributes the same amount.
Equivalently, loops aside, which contribute zero, the bookkeeping gives the
exact core expression
\[
        \cE_H(F,F)
        =\frac1{|E(H)|}\sum_{\{z,z'\}\in E(\Core)}
          \sum_{w\in V(\Core)}\nu(w)\bigl(F(z,w)-F(z',w)\bigr)^2.
\]
Here core edges are counted with multiplicity. Hence
\begin{equation}\label{eq:F-energy}
\begin{aligned}
        \cE_H(F,F)
        &\le \frac{1}{|E(H)|k^2}\sum_{w\in V(\Core)}\nu(w)B_E(w,k+2) \\
        &\le \frac{C}{k^2}\cdot\frac{k}{|E(H)|}
         =O\!\left(\frac1n\right),
\end{aligned}
\end{equation}
where we used \eqref{eq:BE-input}, $k=\Theta(\eps^{-1})$, and
$|E(H)|=\Theta(\eps n)$.

Lemma~\ref{lem:stationary-variational}, \eqref{eq:F-mean}, and
\eqref{eq:F-energy} now give
\[
        \E^H_{\pi\otimes\pi}\meet=\E^H_{\pi\otimes\pi}\tau_\Diag\ge c n.
\]
Also,
\[
        \max_{x,y}\E^H_{x,y}\meet\ge \E^H_{\pi\otimes\pi}\meet,
\]
so the lower bound in \eqref{eq:main-bounds} follows. This completes the proof
of Lemma~\ref{lem:dklp-meeting}.

\section{Proofs of the meeting-time bounds}\label{sec:remaining-proofs}

\begin{proof}[Proof of Theorem~\ref{thm:main-results}(i)]
Fix the constants $c,C$ from Lemma~\ref{lem:dklp-meeting}. Let
$\mathcal B_n\subset\mathfrak G$ consist of all empty or disconnected
multigraphs, together with all nonempty connected multigraphs $G$ for which
\[
        c n
        \le \E^{G}_{\pi\otimes\pi}\meet
        \le \max_{x,y\in V(G)}\E^{G}_{x,y}\meet
        \le Cn
\]
fails. This event is measurable and invariant under relabelling, because the
walk, stationary measure, and meeting-time expectations are deterministic
functions of the sampled multigraph under the conventions fixed above. Self-loops
cause no separate difficulty: a length-one self-loop is just a self-transition
for this deterministic walk, while the actual Erd\H{o}s--R\'enyi component lies
in the subspace without loops. Lemma~\ref{lem:dklp-meeting} gives
$\Pbb_{\rm DKLP}(\mathcal B_n)\to0$. We use DKLP contiguity in the required
direction: every isomorphism-invariant graph property holding with high probability in the
contiguous DKLP model also holds with high probability for the actual giant
component $\mathcal C_1$ of $G(n,(1+\eps)/n)$ \cite[Theorem~2]{dklp}. Hence
$\Pbb_{\rm ER}(\mathcal B_n)\to0$.
\end{proof}

\begin{lemma}[Parameter-retaining fixed-supercritical target bound]
\label{lem:fixed-parameter-target}
Fix $a>1$. Let $q=q(a)$ be the extinction probability of a $\Poi(a)$
Galton--Watson process, and put
\[
        \rho:=1-q,
        \qquad \mu:=aq,
        \qquad \delta:=1-\mu,
        \qquad \Lambda_0:=a-\mu=a\rho .
\]
Since $q=e^{-a(1-q)}$, we have
$\mu e^{-\mu}=ae^{-a}$ and $\mu\in(0,1)$. Thus this definition of $\mu$
agrees with the conjugate-parameter definition in
\eqref{eq:mu-conjugate}, with $a$ in place of $1+\eps$.

Let $H^{(a)}$ be the strictly supercritical contiguous model of
Ding--Lubetzky--Peres \cite[Theorem~1]{dlp-strict}. More precisely, first
sample
\[
        \widehat\Lambda_n\sim\mathcal N(\Lambda_0,1/n).
\]
On the event $\{\widehat\Lambda_n\le0\}$, define the output to be a fixed
two-vertex graph with one edge. Otherwise take independent
$D_u\sim\Poi(\widehat\Lambda_n)$ for $u\in[n]$, retain the vertices with
$D_u\ge3$, condition their total degree to be even, and pair their half-edges
uniformly to obtain the kernel. Replace its edges by independent geometric
paths with success parameter $\delta$, and attach independent $\PGW(\mu)$
trees to the core vertices.

There is an absolute constant $C_{\rm par}<\infty$ such that, for every fixed
$a>1$ and every $\eta>0$, there exists $n_0=n_0(a,\eta)$ for which, whenever
$n\ge n_0$,
\begin{equation}\label{eq:parameter-retaining-probability}
 \Pbb\left(
 \sum_{u,v\in V(H^{(a)})}\pi(u)\pi(v)\Reff^{H^{(a)}}(u,v)
 \le \frac{C_{\rm par}}{\Lambda_0},
 \quad
 |E(H^{(a)})|\le C_{\rm par}\Lambda_0 n
 \right)\ge1-\eta .
\end{equation}
Consequently, on the same event,
$\Kemeny(H^{(a)})\le C_{\rm par}^2n$. In particular, the constant is uniform
in $a$, although the threshold $n_0(a,\eta)$ need not be.
\end{lemma}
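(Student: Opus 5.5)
The plan is to rerun the proofs of Lemmas~\ref{lem:core-resistance} and~\ref{lem:avg-target} on $H^{(a)}$, keeping the parameters $\delta=1-\mu$ and $\Lambda_0=a\rho$ explicit instead of $\eps$. The first step is the parameter bookkeeping. Define the conjugate $\mu=aq$. Then $\delta\asymp\min\{a-1,1\}$ and $\Lambda_0=a\rho\asymp\min\{a-1,1\}\cdot a$, and $\Lambda_0\ge c\,\delta$ holds uniformly in $a>1$. Indeed, near $a=1$ we have $\rho\asymp a-1\asymp\delta$. For large $a$ we have $\delta\to1$ and $\Lambda_0\approx a$. By Gaussian and Poisson concentration, $\widehat\Lambda_n=\Lambda_0(1+o(1))$, and the retained kernel has $N\asymp n\,\Pbb(\Poi(\Lambda_0)\ge3)$ vertices. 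The mean kernel degree satisfies $\bar d_N\asymp\max\{\Lambda_0,3\}$. The core has $N_c\asymp N/\delta$ vertices. Finally, the edge count satisfies $2|E(H^{(a)})|=S\asymp N_c/\delta+\sum_a d_{\Kernel}(a)$, which I expect to be $\asymp\Lambda_0 n$. This gives the edge bound with an absolute constant.

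The second step bounds the average resistance by the same decomposition as in \eqref{eq:res-decomp-target-detail}:
\[
\Reff^{H^{(a)}}(u,v)\le h(u)+\Reff^{\Core}(r(u),r(v))+h(v).
\]
For the tree-depth term, Lemma~\ref{lem:pgw-moments} gives $\sum_u\pi(u)h(u)\lesssim (N_c\delta^{-2})/S\lesssim\delta^{-1}$ when $\delta$ is small. When $\delta$ is bounded away from zero, the depth average is $O(1)$, and $1/\Lambda_0$ dominates only after we compare with stationary mass at high-degree kernel vertices. For the core term, I would use Lemma~\ref{lem:uniform-conditioned-poisson-expansion} in place of the unnormalized gap. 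This lemma is uniform in $\Lambda_N$ and gives
\[
\Reff^{\Kernel}(x,y)\le\gamma_*^{-1}\bigl(d(x)^{-1}+d(y)^{-1}\bigr).
\]
Inserting this bound into the Thomson/Jensen argument of Lemma~\ref{lem:core-resistance}, with stretch moments $\E L^r=O_r(\delta^{-r})$, yields a weighted core average of order $\delta^{-1}$ for small $a-1$. For large $a$ it yields order $1/\bar d_N\asymp1/\Lambda_0$, via \eqref{eq:uniform-kernel-avg-res}, because most stretches then have length $1$. Combining the two regimes gives $C/\Lambda_0$ with an absolute constant. The fluctuation arguments (Chebyshev conditional on the core) carry over verbatim with $\delta$ in place of $\eps$, since $N_c\delta\to\infty$ for fixed $a$.

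The main obstacle is uniformity across all $a>1$, in particular the intermediate and large-$a$ regime. There the kernel is not bounded-degree, the degree moments in Lemma~\ref{lem:kernel-inputs} scale with $\Lambda_0$, and the stationary weights are dominated by kernel degrees rather than by tree masses. I would handle this by splitting at $a=2$. For $a\le2$, everything is within constant factors of the $\eps$-argument with $\eps\asymp a-1$. For $a>2$, I would use the degree-normalized resistance bound \eqref{eq:uniform-spectral-resistance} directly on $\pi$-weighted pairs. Stretches and trees then add only $O(1/\Lambda_0)$ on average, since long stretches have probability $\mu\lesssim ae^{-a}$ and trees have mean size $O(1)$. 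The consequence $\Kemeny\le C_{\rm par}^2n$ then follows from Lemma~\ref{lem:ct-commute}, since the product of the edge bound $C_{\rm par}\Lambda_0 n$ and the resistance bound $C_{\rm par}/\Lambda_0$ is $C_{\rm par}^2 n$.
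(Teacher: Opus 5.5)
The skeleton of your proposal matches the paper's. You use the decomposition $\Reff^{H^{(a)}}(u,v)\le h(u)+\Reff^{\Core}(r(u),r(v))+h(v)$, the PGW area bound for the depth term, and the degree-normalized kernel resistance from Lemma~\ref{lem:uniform-conditioned-poisson-expansion}. You also use Thomson/Jensen against geometric stretch moments, conditional Chebyshev for the tree masses, and $S=2|E(H^{(a)})|\asymp\Lambda_0 n$. Your reduction of $1<a\le2$ to the $\eps$-argument is sound, because there all kernel degree moments are absolutely bounded.

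\textbf{The gap.} It lies in the branch you yourself flag as the main obstacle: the weighted core resistance once kernel degrees are of order $\Lambda_0$. For $a>2$ you give no estimate. You only note that a fraction $\mu$ of stretches exceed one and that trees are small. These facts describe a typical configuration, but they do not bound an averaged effective resistance. They also ignore that the projected stationary weights are inflated exactly at endpoints of long stretches and large trees, so the weights are correlated with the resistance perturbation.

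The mechanism you name also fails as stated. Suppose you insert the degree bound only as a uniform bound on $R_{ab}$ in the argument of Lemma~\ref{lem:core-resistance}. That argument controls each edge through $\sum_{x,y}(\theta_e^{x,y})^2\le 2N/\gamma_0$ and then sums $L_e^r$ over all $M\asymp N\Lambda_0$ kernel edges. This gives $\sum_{x,y}\Reff^{\Kernel^L}(x,y)^2\lesssim N^2\Lambda_0$, whereas the true order is $N^2\Lambda_0^{-2}$. After Cauchy--Schwarz against $\sum_z\bar W_z^2\asymp N\Lambda_0^2$ and division by $S^2$, the resulting bound on the weighted average is of order $\Lambda_0^{1/2}$. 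The Kemeny bound would then be of order $\Lambda_0^{3/2}n$, which is not uniform. A better unnormalized gap cannot repair this. For fixed $a$ the kernel still has degree-three vertices, so that gap stays $O(1)$.

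\textbf{The missing idea.} Keep the pair-dependent resistance inside the Jensen step, as the paper does. Set $r_{xy}=\Reff^{\Kernel}(x,y)\le\gamma_*^{-1}(d_x^{-1}+d_y^{-1})$. Then $\Reff^{\Kernel^L}(x,y)^r\le r_{xy}^{r-1}\sum_eL_e^r(\theta_e^{x,y})^2$. Define $Q_{e,r}:=\sum_{x,y}r_{xy}^{r-1}(\theta_e^{x,y})^2$. It satisfies $\sum_eQ_{e,r}=\sum_{x,y}r_{xy}^r\lesssim N\sum_xd_x^{-r}\lesssim N^2m^{-r}$, by the inverse-degree moments with $m=1\vee\widehat\Lambda_n$. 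It also satisfies $\max_eQ_{e,r}\lesssim N$. Together these give $\sum_{x,y}\Reff^{\Kernel^L}(x,y)^r\lesssim N^2(\delta m)^{-r}$, with concentration.

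Next, let $\omega_x$ be the core degree mass projected to its nearest kernel endpoint. Cauchy--Schwarz with $\sum_x\omega_x^2\lesssim Nm^2\delta^{-2}$ absorbs the correlated projected weights without any pointwise comparison. The relations $\delta m\asymp\Lambda_0$ and $\mu/\delta\lesssim\Lambda_0^{-1}$ then give $C/\Lambda_0$ for every $a>1$ at once, so no split at $a=2$ is needed. The second relation also handles the depth term directly if you keep the factor $\mu$ in $\E A=\mu\delta^{-2}$.

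Your perturbative picture can also be made rigorous. Thomson's principle with the unweighted kernel current gives $\Reff^{\Kernel^L}(x,y)\le r_{xy}+\sum_e(L_e-1)(\theta_e^{x,y})^2$. The normalized gap gives $\sum_{x,y}\widehat\pi(x)\widehat\pi(y)(\theta_e^{x,y})^2\le 2/(\gamma_*N\bar d_N)$ for each non-loop edge. Hence under $\widehat\pi\otimes\widehat\pi$ the stretch correction is $O(\mu/\delta)$. You would still need an argument that passes from $\widehat\pi$ to the actual projected stationary weights, and that is exactly the correlated step your proposal leaves open.
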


\begin{proof}
The conjugacy identities can be written in terms of $\Lambda_0$ as
\[
        q=e^{-\Lambda_0},
        \qquad
        \mu=\frac{\Lambda_0}{e^{\Lambda_0}-1},
        \qquad
        \rho=1-e^{-\Lambda_0}.
\]
Consequently, if $m_0:=1\vee\Lambda_0$, then
\begin{equation}\label{eq:fixed-parameter-relations-new}
        \delta m_0\asymp\Lambda_0,\qquad
        \rho\asymp\delta,\qquad
        \mu m_0^2\le C,\qquad
        \frac{\mu}{\delta}\le\frac{C}{\Lambda_0},\qquad
        \frac{\mu}{\delta^2}\le\frac{C}{\Lambda_0^2},
\end{equation}
with absolute comparison constants. These estimates follow by Taylor
expansion at $\Lambda_0=0$ and by the exponential decay of $\mu$ when
$\Lambda_0\ge1$.

For the fixed value of $a$, the event
\[
        \mathcal A_n:=
        \left\{\frac{\Lambda_0}{2}\le\widehat\Lambda_n
        \le\frac{3\Lambda_0}{2}\right\}
\]
has probability tending to one. In particular, the exceptional event on which
the Gaussian parameter is nonpositive is negligible. Work on $\mathcal A_n$
and put $m:=1\vee\widehat\Lambda_n$, so that $m\asymp m_0$ and
$\delta m\asymp\Lambda_0$ with absolute constants.

Let $N=|V(\Kernel)|$, write $d_x=d_{\Kernel}(x)$, and put
$M=|E(\Kernel)|$. Conditional laws of large numbers for the retained
conditioned-Poisson degrees, together with
Lemma~\ref{lem:uniform-conditioned-poisson-expansion}, give, with probability
tending to one,
\begin{align}
        M&\asymp Nm,                                                   \label{eq:fixed-kernel-size}\\
        \sum_xd_x^r&\le C_rNm^r\quad(r=2,3),                            \label{eq:fixed-positive-degree-moments}\\
        \sum_xd_x^{-r}&\le C_rNm^{-r}\quad(r=2,4),                      \label{eq:fixed-inverse-degree-moments}\\
        \lambda_2(I-P_{\Kernel})&\ge\gamma_* .                         \label{eq:fixed-normalized-gap}
\end{align}
All constants here are absolute. The rates of convergence may depend on the
fixed $a$. The parity conditioning is harmless for these laws of large
numbers, and $N\to\infty$ in probability for every fixed $a>1$.

We next prove the weighted-core estimate that was implicit in the
parameter-retaining argument.

\begin{claim}[Parameter-uniform weighted stretched-core resistance]
\label{claim:fixed-weighted-core}
Let
\[
        D_c:=\sum_{z\in V(\Core)}d_{\Core}(z)=2|E(\Core)|.
\]
With probability tending to one,
\begin{align}
 \sum_{z,w}d_{\Core}(z)d_{\Core}(w)\Reff^{\Core}(z,w)
 &\le \frac{C D_c^2}{\Lambda_0},                                      \label{eq:fixed-Rdd1}\\
 \sum_{z,w}d_{\Core}(z)d_{\Core}(w)\Reff^{\Core}(z,w)^2
 &\le \frac{C D_c^2}{\Lambda_0^2},                                    \label{eq:fixed-Rdd2}
\end{align}
where $C$ is absolute.
\end{claim}

\begin{proof}[Proof of the claim]
The normalized gap in \eqref{eq:fixed-normalized-gap} and
$d_x\ge3$ imply an absolute lower bound $\gamma_0>0$ for the first nonzero
eigenvalue of the unnormalized Laplacian $\Delta_{\Kernel}$. They also give
the pointwise resistance estimate
\[
        r_{xy}:=\Reff^{\Kernel}(x,y)
        \le\gamma_*^{-1}\left(\frac1{d_x}+\frac1{d_y}\right).
\]
It follows from \eqref{eq:fixed-inverse-degree-moments} that, for $r=2,4$,
\begin{equation}\label{eq:fixed-unweighted-kernel-resistance-moments}
        \sum_{x,y}r_{xy}^r
        \le C_rN\sum_xd_x^{-r}
        \le C_rN^2m^{-r}.
\end{equation}

Let $L_e$ be the independent $\operatorname{Geom}(\delta)$ stretch lengths,
and let $\Kernel^L$ be the weighted kernel in which edge $e$ has resistance
$L_e$. For ordered kernel vertices $x,y$, let $\theta^{x,y}$ be the unit
electrical current from $x$ to $y$ in the unweighted kernel. Thomson's
principle and Jensen's inequality give, for $r=2,4$,
\begin{equation}\label{eq:fixed-stretched-jensen}
 \Reff^{\Kernel^L}(x,y)^r
 \le r_{xy}^{\,r-1}\sum_eL_e^r(\theta_e^{x,y})^2.
\end{equation}
Define
\[
        Q_{e,r}:=\sum_{x,y}r_{xy}^{\,r-1}(\theta_e^{x,y})^2.
\]
Then
\[
        \sum_eQ_{e,r}=\sum_{x,y}r_{xy}^r\le C_rN^2m^{-r}.
\]
For an oriented non-loop edge $e$, let $b_e$ be its signed incidence vector.
The current representation and the unnormalized gap give
\[
 \sum_{x,y}(\theta_e^{x,y})^2
 =2N b_e^{\mathsf T}(\Delta_{\Kernel}^{\dagger})^2b_e
 \le\frac{2N}{\gamma_0}
       b_e^{\mathsf T}\Delta_{\Kernel}^{\dagger}b_e
 \le\frac{2N}{\gamma_0}.
\]
The expression is zero for a loop. Since $r_{xy}$ is bounded by an absolute
constant, this yields $\max_eQ_{e,r}\le C_rN$.

The geometric moments satisfy
$\E L_e^r\le C_r\delta^{-r}$ and
$\Var(L_e^r)\le C_r\delta^{-2r}$. Conditional on the kernel,
\begin{align*}
 \E\left[\sum_eL_e^rQ_{e,r}\,\middle|\,\Kernel\right]
 &\le \frac{C_rN^2}{\delta^rm^r},\\
 \Var\left(\sum_eL_e^rQ_{e,r}\,\middle|\,\Kernel\right)
 &\le \frac{C_rN^3}{\delta^{2r}m^r}.
\end{align*}
Because $m^r/N\to0$ for each fixed $a$, Chebyshev's inequality and
\eqref{eq:fixed-stretched-jensen} show that, simultaneously for $r=2,4$,
\begin{equation}\label{eq:fixed-stretched-resistance-moments}
        Z_r:=\sum_{x,y}\Reff^{\Kernel^L}(x,y)^r
        \le\frac{C_rN^2}{\delta^rm^r}
\end{equation}
with probability tending to one. This supplies both resistance moments and
their concentration explicitly.

Assign every core vertex $z$ to a nearest kernel endpoint $a(z)$, write
\[
        \ell(z):=\dist_{\Core}(z,a(z)),
        \qquad
        \omega_x:=\sum_{z:\,a(z)=x}d_{\Core}(z),
\]
and count loops by their two incident half-edges. Then
\[
        \sum_x\omega_x=D_c,
        \qquad
        \omega_x\le C\sum_{e\ni x}L_e.
\]
Put
\[
        Y_\omega:=\sum_{e=xy}(d_x+d_y)L_e^2.
\]
This is the edge sum that dominates the squared projected weights. Consequently,
\begin{equation}\label{eq:fixed-projected-weight-square}
 \sum_x\omega_x^2
 \le C Y_\omega
 \le \frac{CNm^2}{\delta^2}
\end{equation}
with probability tending to one. Indeed, conditional on the kernel,
\[
 \Egr[Y_\omega\mid\Kernel]
 \le C\delta^{-2}\sum_xd_x^2,
 \qquad
 \Vargr(Y_\omega\mid\Kernel)
 \le C\delta^{-4}\sum_xd_x^3.
\]
Equations
\eqref{eq:fixed-positive-degree-moments} and Chebyshev's inequality apply.
Also, by the geometric law of large numbers and
\eqref{eq:fixed-kernel-size},
\begin{equation}\label{eq:fixed-core-degree-size}
        D_c=2\sum_eL_e\asymp\frac{Nm}{\delta}.
\end{equation}

Put
\[
        A_j:=\sum_zd_{\Core}(z)\ell(z)^j,\qquad j=1,2.
\]
On a stretched edge of length $L$, the contributions are bounded by
$CL(L-1)$ for $j=1$ and by $CL^2(L-1)$ for $j=2$. The same bounds hold for
stretched self-loops. Moreover, the squared per-edge contributions have
expectations at most $C\mu\delta^{-4}$ and $C\mu\delta^{-6}$, respectively.
Since $M\mu\to\infty$ for every fixed $a$, the geometric factorial moments
and Chebyshev's inequality therefore give
\[
        A_1\le\frac{CM\mu}{\delta^2},
        \qquad
        A_2\le\frac{CM\mu}{\delta^3}
\]
with probability tending to one. Together with
\eqref{eq:fixed-parameter-relations-new} and
\eqref{eq:fixed-core-degree-size}, this gives
\begin{equation}\label{eq:fixed-core-endpoint-distance}
        A_1\le\frac{CD_c}{\Lambda_0},
        \qquad
        A_2\le\frac{CD_c}{\Lambda_0^2}.
\end{equation}

Cauchy's inequality now removes any need to compare the correlated projected
weights pointwise with kernel degrees:
\begin{align*}
 \sum_{x,y}\omega_x\omega_y\Reff^{\Kernel^L}(x,y)
 &\le\left(\sum_x\omega_x^2\right)Z_2^{1/2}
 \le\frac{CD_c^2}{\Lambda_0},\\
 \sum_{x,y}\omega_x\omega_y\Reff^{\Kernel^L}(x,y)^2
 &\le\left(\sum_x\omega_x^2\right)Z_4^{1/2}
 \le\frac{CD_c^2}{\Lambda_0^2}.
\end{align*}
Finally,
\[
 \Reff^{\Core}(z,w)
 \le \ell(z)+\Reff^{\Kernel^L}(a(z),a(w))+\ell(w).
\]
Summing this inequality with weights $d_{\Core}(z)d_{\Core}(w)$, and using
\eqref{eq:fixed-core-endpoint-distance}, proves
\eqref{eq:fixed-Rdd1}. Squaring, using
$(x+y+z)^2\le3(x^2+y^2+z^2)$, proves
\eqref{eq:fixed-Rdd2}.
\end{proof}

We now insert the degree masses of the attached trees. Conditional on the core,
the variables
\[
        W_z:=\sum_{u:\,r(u)=z}d_{H^{(a)}}(u)
\]
are independent. Write
\[
        \bar W_z:=\Egr[W_z\mid\Core],
        \qquad \xi_z:=W_z-\bar W_z,
        \qquad S:=\sum_zW_z=2|E(H^{(a)})|.
\]
The parameter-retaining Galton--Watson estimates give
\begin{equation}\label{eq:fixed-W-moments-detailed}
        \bar W_z\le C\bigl(d_{\Core}(z)+\delta^{-1}\bigr)
        \le C\delta^{-1}d_{\Core}(z),
        \qquad
        \Vargr(\xi_z\mid\Core)\le C\delta^{-3},
        \qquad
        \Egr[\xi_z^4\mid\Core]\le C\delta^{-7}.
\end{equation}

For completeness, let
\[
        \psi(x):=\E[\Poi(x)\one_{\{\Poi(x)\ge3\}}]
             =x\bigl[1-e^{-x}(1+x)\bigr].
\]
The exact identities above give
$\psi(\Lambda_0)=\Lambda_0\rho\delta$. On $\mathcal A_n$,
\[
        \psi(\widehat\Lambda_n)\asymp\psi(\Lambda_0)
\]
with absolute comparison constants. Indeed, $\psi(x)\asymp x^3$ for
$0<x\le1$ and $\psi(x)\asymp x$ for $x\ge1$. Thus the retained kernel degree
sum is of order $n\psi(\widehat\Lambda_n)\asymp n\Lambda_0\rho\delta$.
The degree law of large numbers on $\mathcal A_n$, followed by the geometric
and Galton--Watson laws of large numbers, therefore yields
\begin{equation}\label{eq:fixed-S-Dc-relations}
        D_c\asymp n\Lambda_0\rho,
        \qquad
        S\asymp\frac{D_c}{\delta}\asymp\Lambda_0n,
        \qquad
        S\ge c\delta^{-1}D_c,
        \qquad
        \delta D_c\longrightarrow\infty
\end{equation}
with probability tending to one. To see the middle comparison directly, write
$I=\sum_e(L_e-1)$ and $V_c=N+I$. Then
$D_c=2(M+I)$, $V_c\le D_c/2$, and
\[
        \Egr[S\mid\Core]=D_c+\frac{2V_c\mu}{\delta}.
\]
Since $V_c\le D_c/2$, this conditional mean is at most
$D_c+D_c\mu/\delta=D_c/\delta$.
If $\delta\ge1/2$, the term $D_c$ gives the lower bound
$S\ge cD_c/\delta$. If $\delta<1/2$, the geometric law of large numbers gives
$V_c\ge cD_c$, and the tree term gives the same conclusion. The conditional
variance bound following from Lemma~\ref{lem:pgw-moments}, together with
$\delta D_c\to\infty$, proves concentration. All comparison constants in
\eqref{eq:fixed-S-Dc-relations} are absolute; only the convergence threshold
may depend on $a$.

Set $R_{zw}:=\Reff^{\Core}(z,w)$ and expand
\[
        T:=\sum_{z,w}W_zW_wR_{zw}=T_0+T_1+T_2,
\]
where
\[
        T_0:=\sum_{z,w}\bar W_z\bar W_wR_{zw},\qquad
        T_1:=2\sum_z\xi_z\sum_w\bar W_wR_{zw},\qquad
        T_2:=\sum_{z,w}\xi_z\xi_wR_{zw}.
\]
By \eqref{eq:fixed-Rdd1} and \eqref{eq:fixed-S-Dc-relations},
\begin{equation}\label{eq:fixed-deterministic-term}
        T_0
        \le C\delta^{-2}\frac{D_c^2}{\Lambda_0}
        \le C\frac{S^2}{\Lambda_0}.
\end{equation}
For the linear term, put $G_z:=\sum_w\bar W_wR_{zw}$. Since
$d_{\Core}(z)\ge2$, Cauchy's inequality and \eqref{eq:fixed-Rdd2} give
\begin{equation}\label{eq:fixed-G-square}
        \sum_zG_z^2
        \le C\delta^{-2}D_c
             \sum_{z,w}d_{\Core}(z)d_{\Core}(w)R_{zw}^2
        \le C\delta^{-2}\frac{D_c^3}{\Lambda_0^2}.
\end{equation}
Therefore, conditionally on the core,
\begin{equation}\label{eq:fixed-linear-var}
        \Vargr(T_1\mid\Core)
        \le C\delta^{-5}\frac{D_c^3}{\Lambda_0^2}.
\end{equation}
After division by $(S^2/\Lambda_0)^2$, the right-hand side is at most
$C/(\delta D_c)=o(1)$.

For the quadratic term, $R_{zz}=0$, and independence gives
\begin{equation}\label{eq:fixed-quadratic-second}
        \Egr[T_2^2\mid\Core]
        \le C\delta^{-6}\sum_{z,w}R_{zw}^2
        \le C\delta^{-6}\frac{D_c^2}{\Lambda_0^2}.
\end{equation}
The ratio of this bound to $(S^2/\Lambda_0)^2$ is at most
$C/(\delta^2D_c^2)=o(1)$. Thus
\begin{equation}\label{eq:fixed-tree-weighted-core-bound}
        \sum_{z,w}W_zW_w\Reff^{\Core}(z,w)
        \le C\frac{S^2}{\Lambda_0}
\end{equation}
with probability tending to one and with an absolute constant.

Finally, for arbitrary vertices $u,v\in H^{(a)}$,
\[
        \Reff^{H^{(a)}}(u,v)
        \le h(u)+\Reff^{\Core}(r(u),r(v))+h(v).
\]
For one attached $\PGW(\mu)$ tree, let $B_z$ be its degree-weighted depth.
Lemma~\ref{lem:pgw-moments} gives
$\Egr B_z\le C\mu\delta^{-2}$ and $\Egr B_z^2\le C\delta^{-5}$.
Conditional on the core, the $B_z$ are independent. Since
$V_c=\Theta_a(n)$ and hence $V_c\mu^2\delta\to\infty$ for each fixed $a$,
Chebyshev's inequality and \eqref{eq:fixed-S-Dc-relations} give
\begin{equation}\label{eq:fixed-depth-average}
        \sum_u\pi(u)h(u)
        \le C\frac{V_c\mu\delta^{-2}}{S}
        \le C\frac{\mu}{\delta}
        \le \frac{C}{\Lambda_0}
\end{equation}
with probability tending to one. Dividing
\eqref{eq:fixed-tree-weighted-core-bound} by $S^2$ and adding the two depth
contributions proves the resistance bound in
\eqref{eq:parameter-retaining-probability}. Finally,
$S=2|E(H^{(a)})|\le C\Lambda_0n$ by
\eqref{eq:fixed-S-Dc-relations}. The resistance identity
\eqref{eq:kemeny-resistance} proves the asserted bound on $\Kemeny$.
\end{proof}
\begin{proof}[Proof of Lemma~\ref{lem:kemeny}]
In the slightly supercritical regime, Lemma~\ref{lem:avg-target} gives
$\Kemeny(H)=O(n)$ in the DKLP model. The same contiguity transfer used in the
proof of Theorem~\ref{thm:main-results}(i) gives the estimate for the actual
giant component.

For the fixed-supercritical regime, apply
Lemma~\ref{lem:fixed-parameter-target} with $a=\lambda$ in the strictly
supercritical contiguous model. It gives $\Kemeny(H^{(\lambda)})\le Cn$ with
an absolute constant, while the threshold in $n$ is allowed to depend on the
fixed value of $\lambda$. The contiguity theorem
\cite[Theorem~1]{dlp-strict} transfers this graph property to the giant
component of $G(n,\lambda/n)$.
\end{proof}

\begin{lemma}[Fixed-supercritical mixing]\label{lem:fixed-mixing}
For every fixed $\lambda>1$, if $H_\lambda$ is the giant component of
$G(n,\lambda/n)$, then
\[
        \tmix(H_\lambda)=O_\lambda(\log^2 n)=o_\lambda(n)
\]
with high probability.
\end{lemma}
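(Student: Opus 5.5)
The bound $\tmix(H_\lambda)=O_\lambda(\log^2 n)$ will not be proved from scratch. We import it from the known fixed-supercritical mixing results and then pass to our continuous-time normalization as in the proof of Lemma~\ref{lem:standard}. Two inputs are available. Benjamini--Kozma--Wormald \cite[Theorem~1.1]{bkw} show that the lazy simple random walk on the giant of $G(n,\lambda/n)$ mixes in $O_\lambda(\log^2 n)$ steps with high probability. Alternatively, Ding--Lubetzky--Peres \cite[Section~4.2]{dlp} obtain the same order on the strictly supercritical contiguous model; the bound then transfers to the actual giant through \cite[Theorem~1]{dlp-strict}.

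The first step is to record that discrete lazy mixing bound. The event $\{\tmix^{\rm lazy}(G)\le K_\lambda\log^2 n\}$ depends only on the isomorphism class of $G$, so either route gives it with high probability. The second step converts it to continuous time using $e^{t(P-I)}=e^{2t(P_{\rm lazy}-I)}$. The continuous-time walk at time $t$ is the lazy chain run for a $\Poi(2t)$ number of steps. Total-variation distance to $\pi$ is nonincreasing in the number of lazy steps. Therefore, for $t=s\,\tmix^{\rm lazy}$ with a suitable absolute $s$, we have
\[
\max_x\|P^t(x,\cdot)-\pi\|_{\TV}\le \Pbb\bigl(\Poi(2t)<\tmix^{\rm lazy}\bigr)+\tfrac14 .
\]
A Chernoff bound makes the first term small, and submultiplicativity then brings the total below $1/4$ after an absolute enlargement of $t$. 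This gives $\tmix(H_\lambda)\le C\,\tmix^{\rm lazy}(H_\lambda)=O_\lambda(\log^2 n)$, and $\log^2 n=o(n)$ is immediate.

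The main obstacle is bookkeeping rather than mathematics. We must make sure the cited bound is stated for the giant itself, or for a model contiguous to it in the direction we need. We must also make sure its threshold convention ($1/4$ versus $1/(2e)$) and its laziness convention match ours only up to absolute factors. If the reference states only a relaxation-time bound, we would instead combine $t_{\rm rel}=O_\lambda(\log^2 n)$ with $\tmix\le t_{\rm rel}\log(4/\pi_{\min})$. Here $\pi_{\min}\ge 1/(2|E|)\ge n^{-2}$, which costs only an extra $\log n$ factor. The resulting $O_\lambda(\log^3 n)$ is still $o(n)$, and that is all the application of Lemma~\ref{lem:aldous-meeting} requires.
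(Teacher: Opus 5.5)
Your proposal is correct and matches the paper's proof. The paper also just cites Benjamini--Kozma--Wormald \cite[Theorem~1.1]{bkw} and converts lazy discrete time to the rate-one continuous-time walk via $e^{t(P-I)}=e^{2t(P_{\rm lazy}-I)}$ and a Poisson concentration bound, as in Lemma~\ref{lem:standard}. One caveat: your alternative route is misattributed, since \cite{dlp} treats the regime $\eps\to0$, $\eps^3n\to\infty$ on the DKLP model rather than the strictly supercritical model of \cite{dlp-strict}, but your main argument does not depend on it.
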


\begin{proof}
This is Theorem~1.1 of Benjamini--Kozma--Wormald \cite{bkw}, translated from
lazy discrete time to the jump-rate-$1$ continuous-time convention as in the
proof of Lemma~\ref{lem:standard}.
\end{proof}

\begin{lemma}[Uniform lower bound near criticality]
\label{lem:uniform-near-lower}
There are absolute constants $\lambda_0>1$ and $c_0>0$ such that, for every
fixed $\lambda\in(1,\lambda_0]$, if $H_\lambda$ is the giant component of
$G(n,\lambda/n)$, then
\[
        \Pbb\left(
        \E^{H_\lambda}_{\pi\otimes\pi}\meet\ge c_0n
        \right)\longrightarrow1.
\]
\end{lemma}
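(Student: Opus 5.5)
We follow the core-projection argument used for Lemma~\ref{lem:dklp-meeting}, now on the strictly supercritical contiguous model $H^{(a)}$ of Lemma~\ref{lem:fixed-parameter-target} with $a=\lambda$. All parameters are kept explicit: $\delta=1-\mu$, $\Lambda_0=a\rho$, and for $\lambda\le\lambda_0$ we have $\delta\asymp\Lambda_0\asymp\lambda-1$ with absolute constants. The scale $\eps^{-1}$ is replaced by $k:=\lfloor\delta^{-1}\rfloor$, and we use the test function
\[
        F(x,y)=\max\Bigl\{1-\frac{\dist_{\Core}(r(x),r(y))}{k},0\Bigr\}.
\]
By Lemma~\ref{lem:stationary-variational}, applied to the two-walk chain with $B=\Diag$, it suffices to prove two bounds with probability tending to one: $\E_{\pi\otimes\pi}F\le 1/2$, and $\cE_H(F,F)\le C/n$ with $C$ absolute. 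Contiguity \cite[Theorem~1]{dlp-strict} then transfers the (isomorphism-invariant) lower bound to $H_\lambda$.

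The energy bound goes through unchanged. The exact core expression for the Dirichlet form derived in Section~\ref{sec:proof} gives
\[
        \cE_H(F,F)\le \frac{1}{|E(H)|k^2}\sum_w\nu(w)B_E(w,k+2).
\]
We therefore need $\sum_w\nu(w)B_E(w,k+2)\le Ck$, uniformly in $\lambda\le\lambda_0$. Combined with $|E(H)|\asymp\Lambda_0 n\asymp\delta n$ from \eqref{eq:fixed-S-Dc-relations}, this yields $O(1/n)$.

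For the ball estimate we repeat Lemmas~\ref{lem:kernel-ball-exploration} and~\ref{lem:core-balls}, now with kernel parameter $\widehat\Lambda_n\asymp\Lambda_0\le C$. The retained degree law is Poisson conditioned on being at least three, with bounded mean, so its moments of every fixed order are bounded uniformly for $\Lambda_0\le1$. Its size-biased forward degree therefore has uniformly bounded moments, and the age-dependent branching domination with marks $\delta L$ applies verbatim. The required uniform Laplace bound is exactly \eqref{eq:uniform-age-laplace}, which holds for all $0<\delta<1$. The size relations $N\asymp n\Lambda_0^3$, $N_c\asymp N/\delta$ and $S\asymp N_c/\delta$ replace \eqref{eq:standard-sizes}, and the tree moments come from Lemma~\ref{lem:pgw-moments}. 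The Chebyshev steps needed $N_c\eps\to\infty$; here they need $N_c\delta\asymp n\delta^3\to\infty$, which holds for each fixed $\lambda$. Only the threshold in $n$ depends on $\lambda$, so all constants remain absolute.

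The main obstacle is the mean bound. The analogue of \eqref{eq:near-input} gives
\[
        \sum_{z,w}\nu(z)\nu(w)\one_{\{\dist_\Core(z,w)\le k\}}\le \frac{C}{N_c\delta}\asymp\frac{C}{n\delta^3},
\]
which tends to zero for each fixed $\lambda$. This is already enough, since only a bound of $1/2$ is needed, but it has to be checked that none of the constants in the derivation (the diagonal term $\sum_zW_z^2\le CN_c\delta^{-3}$, the bound $\sum_zB_D(z,k)^2\le CN_ck^2$, and the variance bounds) depends on $\lambda$. For this I would keep careful track of the degree factors, using that for $\lambda\le\lambda_0$ the quantity $m=1\vee\widehat\Lambda_n$ is bounded. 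The choice of $\lambda_0$ then only serves to guarantee that $\Lambda_0\le1$ and that $\delta$, $\Lambda_0$ and $\rho$ are comparable with absolute constants, which holds by Taylor expansion in \eqref{eq:fixed-parameter-relations-new}. With these two bounds, Lemma~\ref{lem:stationary-variational} gives $\E_{\pi\otimes\pi}\meet\ge (1/4)/(C/n)=c_0n$.
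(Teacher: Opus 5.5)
Your proposal is correct and follows essentially the same route as the paper: pass to the strictly supercritical contiguous model with $k=\lfloor\delta^{-1}\rfloor$, reuse the core-projection test function, rerun Lemmas~\ref{lem:kernel-ball-exploration} and~\ref{lem:core-balls} with $\eps$ replaced by $\delta$ (absolute constants for $\lambda\le\lambda_0$), and conclude via Lemma~\ref{lem:stationary-variational} and contiguity. Your observation that the stationary mean only needs to be at most $1/2$ is a harmless weakening of the paper's $o_\lambda(1)$ bound, and it does not change the argument.
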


\begin{proof}
Let $\mu_\lambda<1$ be conjugate to $\lambda$ and put
$\delta_\lambda=1-\mu_\lambda$. Choose $\lambda_0>1$ so close to one that
$\delta_\lambda\asymp\lambda-1$ with absolute constants for
$1<\lambda\le\lambda_0$. We first work in the strictly supercritical contiguous
model \cite[Theorem~1]{dlp-strict} and set
$k_\lambda=\lfloor\delta_\lambda^{-1}\rfloor$.

The proofs of Lemmas~\ref{lem:kernel-ball-exploration} and \ref{lem:core-balls}
use only the moment bounds for geometric stretches and $\PGW(\mu)$ trees, the
expansion of the conditioned-Poisson kernel, and the relations between the
kernel, core, and giant sizes. Over $1<\lambda\le\lambda_0$ these inputs have
the same absolute constants after replacing $\eps$ by $\delta_\lambda$. Hence,
with high probability,
\begin{equation}\label{eq:fixed-near-core-estimates}
 \sum_{z,w}\nu(z)\nu(w)
 \one_{\{\dist_{\Core}(z,w)\le k_\lambda\}}=o_\lambda(1),
 \qquad
 \sum_z\nu(z)B_E(z,k_\lambda+2)\le Ck_\lambda,
\end{equation}
where $C$ is absolute. Also
$|E(H_\lambda)|\asymp(\lambda-1)n$ and
$k_\lambda\asymp(\lambda-1)^{-1}$ with absolute constants.

Use the same core-projection test as in \eqref{eq:test-F}, with
$k=k_\lambda$. The first estimate in \eqref{eq:fixed-near-core-estimates} makes
its stationary mean $o_\lambda(1)$. The second estimate in \eqref{eq:fixed-near-core-estimates} and the
Dirichlet-form calculation \eqref{eq:F-energy} give
\[
        \cE_{H_\lambda}(F,F)
        \le \frac{C}{|E(H_\lambda)|k_\lambda}
        \le \frac{C'}n
\]
with absolute constants. Lemma~\ref{lem:stationary-variational} gives
$\E_{\pi\otimes\pi}^{H_\lambda}\meet\ge c_0n$. Contiguity transfers the
estimate to the actual giant component.
\end{proof}

\begin{lemma}[Uniform diagonal lower bound away from criticality]
\label{lem:uniform-away-lower}
For the fixed $\lambda_0>1$ in Lemma~\ref{lem:uniform-near-lower}, there is an
absolute constant $c_1>0$ such that, for every fixed $\lambda\ge\lambda_0$,
\[
        \Pbb\left(
        \E^{H_\lambda}_{\pi\otimes\pi}\meet\ge c_1n
        \right)\longrightarrow1.
\]
\end{lemma}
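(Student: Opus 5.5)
We apply Lemma~\ref{lem:stationary-variational} to the two-walk product chain on $H_\lambda$, with $B=\Diag$ and the diagonal indicator $F(x,y)=\one_{\{x=y\}}$ as test function. The resulting bound is
\[
 \E^{H_\lambda}_{\pi\otimes\pi}\meet\ge\frac{(1-\E_{\pi\otimes\pi}F)^2}{\cE(F,F)},
\]
so it suffices to show two things with high probability: $\E_{\pi\otimes\pi}F=\sum_x\pi(x)^2$ is at most $1/2$, and $\cE(F,F)\le C/n$, both with absolute constants valid uniformly over $\lambda\ge\lambda_0$.

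First we compute the energy exactly from \eqref{eq:dirichlet-form}. The function $F$ changes only when one walker jumps onto or off the other's vertex. A first-coordinate jump from $(x,x)$ to $(x',x)$ with $x'\ne x$ has weight $\pi(x)^2/d(x)=\pi(x)/(2|E|)$. A jump from $(x,y)$ to $(y,y)$ with $y\sim x$ has weight $\pi(x)\pi(y)/d(x)=\pi(y)/(2|E|)$. Summing over both orientations and both coordinates, we get
\[
 \cE(F,F)\le \frac{2}{2|E|}\sum_x\pi(x)d(x)=\frac{\sum_x d(x)^2}{2|E|^2}.
\]
Loops contribute nothing, so this is an upper bound.

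Second, by \eqref{eq:pi} the stationary mean is
\[
 \sum_x\pi(x)^2=\frac{\sum_xd(x)^2}{4|E|^2},
\]
so both quantities reduce to the single ratio $\sum_{x\in H_\lambda}d(x)^2/|E(H_\lambda)|^2$. Hence we need, with high probability,
\[
 \sum_{x\in H_\lambda}d(x)^2\le C\,|E(H_\lambda)|\,(1\vee\lambda)
 \qquad\text{and}\qquad
 |E(H_\lambda)|\ge c\,\lambda n
\]
for absolute $C,c$ and all fixed $\lambda\ge\lambda_0$. Given these, the energy is $O(\lambda/(\lambda n))=O(1/n)$ and the mean is $O(1/n)=o(1)$, which yields $\E_{\pi\otimes\pi}\meet\ge c_1 n$.

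Third, we establish these degree facts directly in $G(n,\lambda/n)$, with no contiguity argument needed. Since degrees in the giant are bounded by degrees in the whole graph, we have $\sum_{x\in H_\lambda}d(x)^2\le\sum_{x\in[n]}d(x)^2$. The latter has mean $n(\lambda^2+\lambda)(1+o(1))$, and Chebyshev's inequality gives concentration for each fixed $\lambda$. For the edge count, the giant has $\rho_\lambda n(1+o(1))$ vertices, where $\rho_\lambda=1-q$ is bounded below for $\lambda\ge\lambda_0$. The law of large numbers for its edge count gives $|E(H_\lambda)|=\tfrac{\lambda}{2}(1-q^2)n(1+o(1))$. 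Since $1-q^2\ge 1-q(\lambda_0)^2$ is bounded away from zero, we get $|E(H_\lambda)|\ge c\lambda n$. Alternatively, the last statement of Lemma~\ref{lem:fixed-parameter-target} combined with \eqref{eq:fixed-S-Dc-relations} gives $S\asymp\Lambda_0 n$, and $\Lambda_0\asymp\lambda$ for $\lambda\ge\lambda_0$. The ratio is then at most $C(\lambda^2+\lambda)/(c\lambda n)^2\cdot\lambda$; more precisely, the energy is $\le C(\lambda^2+\lambda)/(\lambda^2n)\le C'/n$ uniformly.

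The main obstacle is the uniformity in $\lambda$, since the constant $c_1$ must not depend on $\lambda$. The key check is that $\sum d^2$ grows like $\lambda^2 n$ while $|E|^2$ grows like $\lambda^2n^2$, so large $\lambda$ does no harm. The lower bound $\lambda\ge\lambda_0$ keeps $1-q$ away from zero, which is exactly why the near-critical range was treated separately in Lemma~\ref{lem:uniform-near-lower}. Only the convergence threshold in $n$ depends on $\lambda$, which the statement allows.
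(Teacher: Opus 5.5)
Your proposal is correct and follows essentially the paper's proof: you use the diagonal indicator as the test function in Lemma~\ref{lem:stationary-variational}, and control both $\E_{\pi\otimes\pi}F=\sum_x\pi(x)^2$ and the two-walk energy (at most $2\sum_x\pi(x)^2$) via $\sum_x d(x)^2\le C(\lambda^2+\lambda)n$ and $|E(H_\lambda)|\ge c\lambda n$, uniformly over $\lambda\ge\lambda_0$. The only slip is the garbled intermediate expression $C(\lambda^2+\lambda)/(c\lambda n)^2\cdot\lambda$; your corrected bound $C(\lambda^2+\lambda)/(\lambda^2 n)\le C'/n$ is the right one.
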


\begin{proof}
Let $G=H_\lambda$ and take the indicator of the diagonal,
\[
        F(x,y)=\one_{\{x=y\}}.
\]
There are absolute constants $A,b>0$, depending only on the already fixed number
$\lambda_0$, such that uniformly over fixed $\lambda\ge\lambda_0$, with high
probability,
\[
        \sum_{x\in V(G)}d(x)^2\le A(\lambda^2+\lambda)n,
        \qquad
        |E(G)|\ge b\lambda n.
\]
The degree estimate follows from exponential-tail bounds for binomial degrees,
and the edge estimate follows from the explicit giant-edge density; see, for
example, \cite[Chapter~4]{vanderhofstad}. Hence
\[
        \E_{\pi\otimes\pi}F
        =\sum_x\pi(x)^2
        =\frac{\sum_xd(x)^2}{(2|E(G)|)^2}
        \le \frac{C}{n}.
\]
A jump of either coordinate can change $F$ only by entering or leaving the
diagonal. Reversibility gives
\[
        \cE_G(F,F)\le2\sum_x\pi(x)^2\le\frac{C}{n}.
\]
Lemma~\ref{lem:stationary-variational} yields the claim, with an absolute $c_1$.
\end{proof}

\begin{lemma}[Separated mass from effective resistance]\label{lem:separated-mass}
Let $G$ be a finite connected multigraph. Suppose that there are nonempty
disjoint sets $A,B\subseteq V(G)$ and constants $\alpha,R>0$ such that
\[
        \pi(A)\ge\alpha,\qquad \pi(B)\ge\alpha,\qquad
        \Reff(A,B)\ge R .
\]
Then
\[
        \E_{\pi\otimes\pi}\meet\ge 4\alpha^4|E(G)|R .
\]
\end{lemma}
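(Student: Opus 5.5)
The plan is to apply the variational bound of Lemma~\ref{lem:stationary-variational} to the two-walk product chain, with target set $\Diag$. The test function will be built from the equilibrium potential between $A$ and $B$. Let $g:V(G)\to[0,1]$ be the minimiser in the Dirichlet principle: $g=0$ on $A$, $g=1$ on $B$, and
\[
        \mathcal D_G(g)=\sum_{xy\in E(G)}(g(x)-g(y))^2=\Reff(A,B)^{-1}\le R^{-1}.
\]
By the maximum principle (or by truncating to $[0,1]$, which does not increase the energy), we may assume $0\le g\le 1$. The test function is
\[
        F(x,y):=1-|g(x)-g(y)| .
\]
It satisfies $0\le F\le1$ and $F=1$ on $\Diag$, so Lemma~\ref{lem:stationary-variational} applies and gives $\E_{\pi\otimes\pi}\meet=\E_{\pi\otimes\pi}\tau_\Diag\ge(1-\E_{\pi\otimes\pi}F)^2/\cE(F,F)$.

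First we bound the mean from below. If $X,Y$ are independent with law $\pi$, then $|g(X)-g(Y)|=1$ whenever one lies in $A$ and the other in $B$. This gives
\[
        1-\E_{\pi\otimes\pi}F=\E|g(X)-g(Y)|\ge 2\pi(A)\pi(B)\ge 2\alpha^2 .
\]

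Next we bound the energy using the product Dirichlet form \eqref{eq:dirichlet-form}, which is a deterministic identity valid on any finite connected multigraph. For a first-coordinate jump $x\to x'$, the reverse triangle inequality gives $|F(x,y)-F(x',y)|\le|g(x)-g(x')|$. The weight of this jump is $\pi(x)\pi(y)/d(x)=\pi(y)/(2|E(G)|)$. Summing over $y$ removes $\pi(y)$. Summing over ordered adjacent pairs $(x,x')$ gives $2\mathcal D_G(g)$, since loops contribute zero. The first coordinate therefore contributes at most $\tfrac12\cdot\frac{1}{2|E(G)|}\cdot 2\mathcal D_G(g)=\mathcal D_G(g)/(2|E(G)|)$. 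The second coordinate contributes the same by symmetry. Hence
\[
        \cE(F,F)\le\frac{\mathcal D_G(g)}{|E(G)|}\le\frac{1}{|E(G)|R}.
\]

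Combining the two estimates yields $\E_{\pi\otimes\pi}\meet\ge(2\alpha^2)^2|E(G)|R=4\alpha^4|E(G)|R$, as claimed. The only delicate point is the choice of test function, since it fixes the constant. The smooth alternative $1-(g(x)-g(y))^2$ also works, but it loses a factor $4$ through the bound $|g(x)+g(x')-2g(y)|\le2$. The Lipschitz choice $1-|g(x)-g(y)|$ is what gives exactly the constant $4$.
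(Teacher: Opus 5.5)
Your proposal is correct and follows the paper's proof essentially step for step. It uses the same equilibrium-potential test function $F(x,y)=1-|f(x)-f(y)|$, the same bound $1-\E_{\pi\otimes\pi}F\ge 2\pi(A)\pi(B)\ge2\alpha^2$, and the same comparison $\cE(F,F)\le \mathcal D_G(f)/|E(G)|\le 1/(|E(G)|R)$, fed into Lemma~\ref{lem:stationary-variational}; your explicit bookkeeping of the product Dirichlet form spells out what the paper states via the one-Lipschitz property of $(a,b)\mapsto1-|a-b|$.
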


\begin{proof}
Let $f$ be the equilibrium potential between $A$ and $B$ \cite[Chapter~9]{lpw}:
$f=0$ on $A$, $f=1$ on $B$, and
\[
        \frac1{2|E(G)|}\sum_{uv\in E(G)}(f(u)-f(v))^2
        =\frac1{2|E(G)|\,\Reff(A,B)}.
\]
By the maximum principle, $0\le f\le1$. Set $F(x,y)=1-|f(x)-f(y)|$. Then
$0\le F\le1$ and $F=1$ on the diagonal. Also $F=0$ on $A\times B$ and on
$B\times A$, so
\[
        1-\E_{\pi\otimes\pi}F\ge 2\pi(A)\pi(B)\ge2\alpha^2.
\]
The map $(a,b)\mapsto1-|a-b|$ is one-Lipschitz in each coordinate. Hence the
Dirichlet form of $F$ for the two-walk chain is at most twice the one-walk
Dirichlet form of $f$:
\[
        \cE_G(F,F)
        \le 2\cdot \frac1{2|E(G)|}
             \sum_{uv\in E(G)}(f(u)-f(v))^2
        \le \frac1{|E(G)|R}.
\]
Lemma~\ref{lem:stationary-variational} gives the result.
\end{proof}

\begin{lemma}[Critical-window geometric input]
\label{lem:critical-geometry}
Fix $A<\infty$ and suppose that the edge probabilities $p_n$ satisfy
$|n^{1/3}(np_n-1)|\le A$.  Let $\mathcal C_1$ be the largest component of $G(n,p_n)$.
For every $\eta>0$ there are constants
$0<b_{A,\eta}<B_{A,\eta}<\infty$ and $\alpha_{A,\eta}>0$ such that, with
probability at least $1-\eta$ for all large $n$,
\begin{align*}
        b_{A,\eta} n^{2/3}&\le |E(\mathcal C_1)|\le B_{A,\eta} n^{2/3},\\
        \tmix(\mathcal C_1)&\le B_{A,\eta} n,\\
        \max_{u,v\in V(\mathcal C_1)}\Reff^{\mathcal C_1}(u,v)&\le B_{A,\eta} n^{1/3},
\end{align*}
and there are disjoint vertex sets $A_n,B_n\subseteq V(\mathcal C_1)$ such that
\[
        \pi(A_n),\pi(B_n)\ge \alpha_{A,\eta},
        \qquad
        \Reff^{\mathcal C_1}(A_n,B_n)\ge b_{A,\eta} n^{1/3}.
\]
\end{lemma}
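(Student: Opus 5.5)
The plan is to obtain each of the four assertions from the corresponding critical-window estimate of Nachmias and Peres \cite{nachmias-peres-critical}. Each will be secured on an event of probability at least $1-\eta/4$, and a union bound will then give the stated $1-\eta$. The constants $b_{A,\eta},B_{A,\eta},\alpha_{A,\eta}$ are the worst of the four. Throughout, the window condition $|n^{1/3}(np_n-1)|\le A$ is exactly the hypothesis under which their theorems hold with constants depending only on $A$ and the chosen probability level.

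\emph{Volume and edges.} By \cite[Theorem~1.1]{nachmias-peres-critical}, $|V(\mathcal C_1)|\in[b n^{2/3},Bn^{2/3}]$ with probability at least $1-\eta/8$. The excess $|E(\mathcal C_1)|-|V(\mathcal C_1)|+1$ is tight in the critical window; this is part of the same volume/surplus input, or Lemma~5.4 there. Since the component is connected, we get
\[
|V(\mathcal C_1)|-1\le |E(\mathcal C_1)|\le |V(\mathcal C_1)|+K_{A,\eta},
\]
which gives the first display.

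\emph{Resistance and mixing.} The maximal resistance bound is immediate from the diameter estimate: $\Reff(u,v)\le\dist(u,v)\le\operatorname{diam}(\mathcal C_1)\le Bn^{1/3}$, by \cite[Theorem~1.1]{nachmias-peres-critical}. The mixing bound comes from \cite[Theorem~1.2]{nachmias-peres-critical}, which gives lazy discrete-time mixing of order $n$ with probability at least $1-\eta/8$. We convert it to the jump-rate-$1$ continuous-time convention exactly as in the proof of Lemma~\ref{lem:standard}: $e^{t(P-I)}=e^{2t(P_{\rm lazy}-I)}$, followed by a Poisson Chernoff bound, costs only an absolute factor. If only the upper resistance and volume bounds were wanted, one could alternatively use commute times, $\tmix\lesssim\max_{u,v}\E_u\tau_v\le 2|E|\max\Reff=O(n)$, which is a fallback needing no separate citation.

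\emph{Separated sets.} This is the step I expect to be the main obstacle. Propositions~5.5--5.7 of \cite{nachmias-peres-critical} give the Nash--Williams lower bound on resistance. With probability at least $1-\eta/4$ there is a vertex $v_0$ and a set of at least $c\,n^{2/3}$ vertices at graph distance at least $c\,n^{1/3}$ from $v_0$. In addition, there are $\ge c\,n^{1/3}$ disjoint edge cutsets of bounded size separating them from a ball around $v_0$ that itself contains $\ge c\,n^{2/3}$ vertices. I would take $A_n$ to be that inner set and $B_n$ the far set. Nash--Williams then gives $\Reff(A_n,B_n)\ge\sum_j|\Pi_j|^{-1}\ge b\,n^{1/3}$. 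For the stationary masses, every vertex has degree at least one, so $\pi(A_n)\ge|A_n|/(2|E(\mathcal C_1)|)\ge c/(2B)$ by the edge bound above; likewise for $B_n$. The delicate point is extracting from the cited propositions both sets with \emph{linear-in-$n^{2/3}$} size simultaneously with the bounded-size cutsets. If the cited statement only controls resistance from a single vertex, I would enlarge $v_0$ to a ball of radius $\eta' n^{1/3}$. I would then use the volume-growth estimate (Lemma~5.4) to show this ball has positive fraction of the volume, while the remaining cutsets beyond radius $2\eta'n^{1/3}$ still number order $n^{1/3}$.
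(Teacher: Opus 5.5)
Your proposal is correct and follows essentially the same route as the paper: Nachmias--Peres for volume, diameter (hence $\Reff\le\dist$) and mixing with the same continuous-time conversion, then their non-lane-rich vertex combined with Nash--Williams over the disjoint bounded-size lane cutsets for the separated sets. The differences are minor. The paper takes $B_n=\mathcal C_1\setminus B(v,r-1)$ and gets $\pi(B_n)\ge 2/3$ directly from the Nachmias--Peres property $|E(B(v,r))|\le|E(\mathcal C_1)|/3$, which settles the ``delicate point'' you flag. It also uses their edge-volume bound, rather than surplus tightness, for $|E(\mathcal C_1)|=O(n^{2/3})$.
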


\begin{proof}
First choose $\beta=\beta(A,\eta)>0$ so that, for all sufficiently large $n$,
\[
        \Pbb\bigl(|\mathcal C_1|<\beta n^{2/3}\bigr)\le\eta/4;
\]
this is the critical-window component-size estimate quoted after
Theorem~1.2 of \cite{nachmias-peres-critical}. Next choose the constants in
the diameter, mixing-time, and edge-volume upper estimates of Nachmias and
Peres so that their joint failure probability is at most $\eta/4$. Thus their
intersection with the preceding component-size event has failure probability
at most $\eta/2$. Their results apply throughout
$p=(1+O(n^{-1/3}))/n$, and on this event
\[
 \operatorname{diam}(\mathcal C_1)=O_{A,\eta}(n^{1/3}),\qquad
 \tmix(\mathcal C_1)=O_{A,\eta}(n),\qquad
 |E(\mathcal C_1)|=O_{A,\eta}(n^{2/3})
\]
\cite[Theorems~1.1--1.2 and Section~6]{nachmias-peres-critical}. Since
$\mathcal C_1$ is connected, this event also gives, for all sufficiently large
$n$,
\[
        |E(\mathcal C_1)|
        \ge |\mathcal C_1|-1
        \ge \frac{\beta}{2}n^{2/3}.
\]
The
conversion from lazy discrete time to the jump-rate-$1$ continuous-time
normalization changes the mixing estimate only by an absolute constant, as in
Lemma~\ref{lem:standard}. Rayleigh monotonicity and
$\Reff(u,v)\le\dist(u,v)$ give the resistance upper bound.

For the separated-mass estimate, apply the lower-bound construction in
\cite[Lemma~5.4, Propositions~5.5--5.7 and proof of
Theorem~2.1(c.2)]{nachmias-peres-critical}, choosing its constants so that its
failure probability is at most $\eta/2$. On the resulting event, every
component $C$ with at least $\beta n^{2/3}$ vertices contains a vertex $v$,
radii $h<k<r$, and an integer $L=O_{A,\eta}(1)$ such that
\[
        |B(v,h)|\ge c n^{2/3},\qquad
        |E(B(v,r))|\le |E(C)|/3,\qquad
        \frac{k}{L}\ge c n^{1/3},\qquad
        h<\frac{k}{4L},
\]
and $v$ is not $L$-lane-rich for $(k,r)$. Here $E(B(v,r))$ denotes the set
of edges of $C$ with both endpoints in $B(v,r)$.

By definition, the last condition gives at least $k/4-O(1)$ levels
\[
        j\in[\lceil k/2\rceil,k]
\]
having fewer than $L$ lanes for $(v,r)$. For each such $j$, let $\Pi_j$ be
the set of lane edges between levels $j-1$ and $j$. Since
$h<k/(4L)\le k/2$, every $\Pi_j$ separates $B(v,h)$ from
$C\setminus B(v,r-1)$. The cutsets are edge-disjoint, and
$1\le|\Pi_j|<L$. The Nash--Williams inequality \cite{nash-williams} therefore
gives
\[
        \Reff^C\bigl(B(v,h),\,C\setminus B(v,r-1)\bigr)
        \ge \sum_j\frac1{|\Pi_j|}
        \ge c\,\frac{k}{L}
        \ge c n^{1/3}.
\]

Apply this construction to $C=\mathcal C_1$ on the event
$|\mathcal C_1|\ge\beta n^{2/3}$, and set
\[
        A_n:=B(v,h),\qquad
        B_n:=\mathcal C_1\setminus B(v,r-1).
\]
The total edge volume of $\mathcal C_1$ is at most
$B_{A,\eta}n^{2/3}$ on the upper-estimate event, while $B(v,h)$ is connected
and has at least $cn^{2/3}$ vertices. Hence its degree volume is at least
$2(cn^{2/3}-1)$, and $\pi(A_n)\ge\alpha_{A,\eta}$ after decreasing
constants. Every edge incident to $B(v,r-1)$ has its other endpoint in
$B(v,r)$, so
\[
        \operatorname{vol}(B(v,r-1))
        \le2|E(B(v,r))|
        \le\frac23|E(\mathcal C_1)|.
\]
Dividing by $2|E(\mathcal C_1)|$ gives
$\pi(B(v,r-1))\le1/3$, and hence $\pi(B_n)\ge2/3$. The set-to-set resistance
bound above gives the required separation.

The joint component-size and upper-estimate event, and the separated-mass
event, each fail with probability at most $\eta/2$. Their intersection has
probability at least $1-\eta$, proving the lemma.
\end{proof}

\begin{proof}[Proof of Theorem~\ref{thm:main-results}(ii)]
Fix $\lambda>1$, and write $H_\lambda$ for the largest component of
$G(n,\lambda/n)$. Lemma~\ref{lem:kemeny} gives the uniform estimate
$\Kemeny(H_\lambda)\le C_{\rm K}n$ with high probability, while
Lemma~\ref{lem:fixed-mixing} gives $\tmix(H_\lambda)=o_\lambda(n)$. Hence, for
all sufficiently large $n$---with the threshold allowed to depend on the fixed
$\lambda$---we have $\tmix(H_\lambda)\le n$. Lemma~\ref{lem:aldous-meeting}
yields
\[
        t_{\rm meet}(H_\lambda)
        \le K'_A\bigl(\tmix(H_\lambda)+\Kemeny(H_\lambda)\bigr)
        \le K'_A(1+C_{\rm K})n.
\]
This gives an upper constant independent of $\lambda$.

For the lower bound, use Lemma~\ref{lem:uniform-near-lower} when
$1<\lambda\le\lambda_0$ and Lemma~\ref{lem:uniform-away-lower} when
$\lambda\ge\lambda_0$. With $c:=\min\{c_0,c_1\}$,
\[
        \E^{H_\lambda}_{\pi\otimes\pi}\meet\ge cn
\]
with high probability for every fixed $\lambda>1$. The worst-case expectation is
at least this stationary average. The same absolute constants therefore work in
both supercritical regimes after decreasing $c$ and increasing $C$ if needed.
\end{proof}

\begin{proof}[Proof of Theorem~\ref{thm:main-results}(iii)]
Fix $\eta>0$ and apply Lemma~\ref{lem:critical-geometry} with parameter
$\eta$. On the event in that lemma,
\[
        |E(\mathcal C_1)|\le B_{A_0,\eta} n^{2/3},\qquad
        \max_{u,v}\Reff(u,v)\le B_{A_0,\eta} n^{1/3}.
\]
By \eqref{eq:kemeny-resistance},
\[
        \Kemeny(\mathcal C_1)
        =|E(\mathcal C_1)|\sum_{u,v}\pi(u)\pi(v)\Reff(u,v)
        \le B_{A_0,\eta}^2 n .
\]
Together with $\tmix(\mathcal C_1)\le B_{A_0,\eta} n$, Aldous's comparison gives
\[
        t_{\rm meet}(\mathcal C_1)
        \le K'_A\bigl(\tmix(\mathcal C_1)+\Kemeny(\mathcal C_1)\bigr)
        \le C_{A_0,\eta} n .
\]
For the lower bound, the separated sets supplied by
Lemma~\ref{lem:critical-geometry} give
\[
        |E(\mathcal C_1)|\ge b_{A_0,\eta} n^{2/3},\qquad
        \Reff(A_n,B_n)\ge b_{A_0,\eta} n^{1/3},\qquad
        \pi(A_n),\pi(B_n)\ge\alpha_{A_0,\eta}.
\]
Lemma~\ref{lem:separated-mass} gives
\[
        \E^{\mathcal C_1}_{\pi\otimes\pi}\meet
        \ge 4\alpha_{A_0,\eta}^4 b_{A_0,\eta}^2 n.
\]
Changing constants and taking the intersection of the high-probability events
proves the theorem.
\end{proof}

\section{Coalescence time bounds: Proof of Corollary~\ref{cor:voter}}\label{sec:proofCoro}

We first prove the auxiliary lemmas needed for Corollary~\ref{cor:voter}.
\begin{lemma}[Slightly supercritical maximal hitting bound]\label{lem:slightly-supercritical-hit}
Let $H_{\rm ER}$ be the giant component of $G(n,(1+\eps)/n)$, where
$\eps\to0$ and $\eps^3n\to\infty$. Then, with high probability,
\begin{equation}\label{eq:slightly-supercritical-hit}
        t_{\rm hit}^{\rm ct}(H_{\rm ER})
        :=\max_{x,y\in V(H_{\rm ER})}\E_x^{H_{\rm ER}}\tau_y
        \le C_{\rm hit}n\log(\eps^3n)
\end{equation}
for an absolute constant $C_{\rm hit}<\infty$.
\end{lemma}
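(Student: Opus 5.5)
The plan is to use the commute-time identity of Lemma~\ref{lem:ct-commute}: for all $x,y$,
\[
\E_x\tau_y\le\E_x\tau_y+\E_y\tau_x=2|E(H_{\rm ER})|\Reff(x,y)\le 2|E(H_{\rm ER})|\dist(x,y),
\]
by Rayleigh monotonicity. This gives
\[
t_{\rm hit}^{\rm ct}(H_{\rm ER})\le 2|E(H_{\rm ER})|\operatorname{diam}(H_{\rm ER}).
\]
It then suffices to establish two high-probability bounds with absolute constants:
\[
|E(H_{\rm ER})|\le C\eps n,\qquad \operatorname{diam}(H_{\rm ER})\le C\eps^{-1}\log(\eps^3n).
\]
Their product is $O(n\log(\eps^3 n))$, which is \eqref{eq:slightly-supercritical-hit}.

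\textbf{Edge count.} The edge bound is \eqref{eq:standard-sizes} of Lemma~\ref{lem:standard} in the DKLP model. It transfers to $H_{\rm ER}$ by the same contiguity argument as in the proof of Theorem~\ref{thm:main-results}(i), since $\{|E(G)|>C\eps n\}$ is an isomorphism-invariant property. Alternatively, it follows directly from the classical law of large numbers for the giant's size and excess in the slightly supercritical regime.

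\textbf{Diameter.} The diameter bound is exactly the slightly supercritical diameter theorem of Ding--Kim--Lubetzky--Peres \cite{dklp-diameter}. That theorem gives $\operatorname{diam}(H_{\rm ER})=(3+o(1))\eps^{-1}\log(\eps^3n)$ with high probability whenever $\eps^3n\to\infty$. In particular the diameter is at most $4\eps^{-1}\log(\eps^3n)$, with an absolute constant.

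\textbf{Main obstacle.} The only delicate point is matching normalizations and uniformity. The cited diameter result is stated for graph distance in the original Erd\H{o}s--R\'enyi giant, so no contiguity is needed for it. I must check that its $o(1)$ is uniform in the regime, which it is since the constants depend only on $\eps^3n\to\infty$.

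If one preferred to stay inside the DKLP model, I would instead bound the diameter there by three pieces. The first is the kernel diameter, which is $O(\log N)$ in weighted form, giving $O(\eps^{-1}\log(\eps^3n))$ after stretching. The second is the maximal stretch, which is $O(\eps^{-1}\log(\eps^3n))$ by a union bound over $O(\eps^3n)$ geometric lengths. The third is the maximal attached-tree height, which is $O(\eps^{-1}\log(\eps^3n))$ by subcritical $\PGW(\mu)$ height tails over $O(\eps^2n)$ trees. The resulting diameter property would then be transferred by contiguity. I expect the kernel-distance estimate to be the hardest step in this alternative, which is why I would cite \cite{dklp-diameter} directly. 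Finally, the continuous-time normalization is harmless, since Lemma~\ref{lem:ct-commute} is already stated for the jump-rate-$1$ walk.
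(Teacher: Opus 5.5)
Your proposal is correct and matches the paper's proof essentially step for step. It uses the commute identity of Lemma~\ref{lem:ct-commute}, Rayleigh monotonicity $\Reff(x,y)\le\dist(x,y)\le\operatorname{diam}(H_{\rm ER})$, the Ding--Kim--Lubetzky--Peres diameter bound $O(\eps^{-1}\log(\eps^3n))$, and $|E(H_{\rm ER})|=O(\eps n)$ from the DKLP size estimate transferred by contiguity.
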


\begin{proof}
The diameter estimate of Ding--Kim--Lubetzky--Peres gives, throughout the
slightly supercritical regime,
\[
        \operatorname{diam}(H_{\rm ER})
        \le C_{\rm diam}\eps^{-1}\log(\eps^3n)
\]
with high probability \cite{dklp-diameter}. Also
$|E(H_{\rm ER})|=O(\eps n)$ with high probability by the DKLP size estimate
and contiguity. On the intersection of these events, Rayleigh monotonicity gives
$\Reff(x,y)\le \dist(x,y)\le\operatorname{diam}(H_{\rm ER})$ for all vertices.
The continuous-time commute identity yields
\[
        \E_x\tau_y+\E_y\tau_x
        =2|E(H_{\rm ER})|\Reff(x,y)
        \le C n\log(\eps^3n),
\]
and nonnegativity of the two hitting expectations gives \eqref{eq:slightly-supercritical-hit}.
\end{proof}

\begin{lemma}[Oliveira coalescence--hitting comparisons]\label{lem:oliveira-hit}
Let $(X_t)_{t\ge0}$ be a finite irreducible reversible continuous-time chain with
generator $\mathcal L=P-I$, where $P$ is a reversible transition kernel. Let
\[
        t_{\rm hit}^{\rm ct}:=\max_{x,y}\E_x\tau_y .
\]
For the standard continuous-time coalescing process driven by this chain, write
$C_k^{\rm coal}$ for the first time at which at most $k$ particles remain,
started from an arbitrary finite initial configuration. There is a universal
constant $C_{\rm Oli}<\infty$ such that, for all $k\ge1$,
\begin{equation}\label{eq:oliveira-partial}
        \E C_k^{\rm coal}
        \le C_{\rm Oli}\left(\frac{t_{\rm hit}^{\rm ct}}{k}+\tmix\right).
\end{equation}
In particular, for the process started with one particle at every state,
\begin{equation}\label{eq:oliveira-hit}
        \E\coal\le C_{\rm Oli}\,t_{\rm hit}^{\rm ct}.
\end{equation}
\end{lemma}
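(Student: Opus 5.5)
The plan is to derive both estimates from Oliveira's paper \cite{oliveira-coalescence}, and to check only that his normalizations match the continuous-time chain with generator $P-I$ used here. Oliveira works with a reversible continuous-time chain that jumps at rate one according to $P$, which is our convention. His Theorem~1.1 states that $\E\coal\le C\,t_{\rm hit}$ from any initial configuration. His Theorem~1.2 states that the time to reach at most $k$ clusters has expectation $O(t_{\rm hit}/k+\tmix)$.

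To derive \eqref{eq:oliveira-partial}, I would quote Theorem~1.2 and then reconcile two possible normalization mismatches. The first is Oliveira's mixing-time threshold. By submultiplicativity of total-variation distance, changing the threshold changes $\tmix$ only by an absolute factor, and that factor is absorbed into $C_{\rm Oli}$. The second arises if his hitting parameter is defined differently, for instance as $\max_{x}\E_\pi\tau_x$ or as a worst case over sets. Then I would compare it with $t^{\rm ct}_{\rm hit}$ using the elementary inequality $\E_\pi\tau_y\le\max_x\E_x\tau_y$. If his parameter is a hitting time of sets with a stationary-mass condition, I would bound it by the maximal single-state hitting time; this direction is monotone and is the favourable one. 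For an arbitrary finite initial configuration, I would invoke monotonicity: adding particles only delays $C_k^{\rm coal}$. Hence it suffices to treat the configuration with one particle at each state, or with arbitrarily many particles per state, which his results cover directly.

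For \eqref{eq:oliveira-hit}, the case $k=1$ of \eqref{eq:oliveira-partial} is not sufficient, because it retains the $\tmix$ term. Instead I would quote Theorem~1.1 directly, which gives $\E\coal\le C\,t_{\rm hit}$. Alternatively, I would use $\tmix\le C\,t^{\rm ct}_{\rm hit}$, which holds for reversible chains; for example, $\tmix$ is bounded by a constant times $\max_x\E_\pi\tau_x$ (Aldous--Fill). This reduces \eqref{eq:oliveira-hit} to \eqref{eq:oliveira-partial} with $k=1$.

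I expect the main obstacle to be purely bookkeeping: matching Oliveira's exact hitting and mixing parameters, and his convention on rates, to ours. I would resolve each mismatch by a monotone comparison in the favourable direction, which enlarges the universal constant where necessary.
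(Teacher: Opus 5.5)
Your proposal is correct and matches the paper's proof. The paper likewise just quotes Oliveira's Theorem~1.1 for the full-coalescence bound and Theorem~1.2 for the partial bound, noting that his results hold for arbitrary finite reversible generators (in particular $P-I$) and that changing the mixing threshold only alters the universal constant; your extra remarks on monotonicity in the initial configuration and the alternative route via $\tmix\le C\,t^{\rm ct}_{\rm hit}$ are sound but not needed.
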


\begin{proof}
The full-coalescence estimate \eqref{eq:oliveira-hit} is the continuous-time
reversible-chain form of Oliveira's universal coalescence bound
\cite[Theorem~1.1]{oliveira-coalescence}. The partial-coalescence estimate
\eqref{eq:oliveira-partial} is the corresponding continuous-time form of
\cite[Theorem~1.2]{oliveira-coalescence}. Oliveira's statements apply to
arbitrary finite reversible Markov-chain generators; applying them to the
rate-one generator $P-I$ and changing the fixed mixing threshold only changes
the universal constant.
\end{proof}

\begin{lemma}[KMS full-coalescence comparison for reversible lazy kernels]
\label{lem:kms-reversible-kernel}
Let $Q$ be a finite irreducible reversible transition kernel on $N\ge2$ states with
stationary law $\pi$, and assume $Q(x,x)\ge1/2$ for every state. Let
$t_{\rm meet}^Q$ be the worst-case expected meeting time of two independent
synchronous $Q$-walks and let $t_{\rm mix}^Q$ be the total-variation mixing time
with threshold $1/4$. For the synchronous coalescing $Q$-walk process started
with one particle at every state,
\[
        \E T_{\rm coal}^Q
        \le C\,t_{\rm meet}^Q
        \left(1+\sqrt{\frac{t_{\rm mix}^Q}{t_{\rm meet}^Q}}\log N\right)
\]
with a universal constant $C$.
\end{lemma}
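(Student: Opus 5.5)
This is the KMS comparison \cite{kms} specialized to lazy reversible kernels, and I would prove it by reproducing their argument in this restricted setting rather than re-deriving it from scratch. The proof has two phases. In the first phase one reduces the number of particles from $N$ down to a threshold $k_0$. In the second phase one finishes the coalescence from $k_0$ particles down to one.

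\emph{Step 1 (meeting tail from a mixed start).} Laziness makes the spectrum of $Q$ nonnegative. Reversibility lets one compare the product chain started from any pair with the stationary product. After $t_{\rm mix}^Q$ steps, the two walks are within total variation $1/2$ of $\pi\otimes\pi$. From there, a standard stationary-renewal argument gives a geometric tail
\[
  \Pbb(\meet>s\,t_{\rm meet}^Q)\le 2^{-\lfloor s/2\rfloor}.
\]
More importantly, it gives the short-time estimate
\[
  \Pbb_{\pi\otimes\pi}(\meet\le t)\ge c\,t/t_{\rm meet}^Q
  \qquad\text{for } t_{\rm mix}^Q\le t\le t_{\rm meet}^Q.
\]
I would prove the short-time estimate from the renewal identity relating expected collision local time to meeting probability. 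The collision-local-time moments are controlled by $\sum_s\sum_x(Q^s(x,\cdot))^2$ via the spectral decomposition, which needs $Q\ge0$.

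\emph{Step 2 (halving).} Consider $k$ particles and pair them arbitrarily. Run the process for $t_k\asymp t_{\rm mix}^Q+t_{\rm meet}^Q/k$. Here the negative-correlation / union-of-events argument of KMS is used, which relies on the fact that coalescence only helps: disjoint pairs moving independently up to the first merger. A constant fraction of the pairs then meet with probability bounded below. Each meeting merges two particles, so the particle count decreases from $k$ to at most $(1-c)k$ within expected time $O(t_{\rm mix}^Q+t_{\rm meet}^Q/k)$.

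Iterating down to $k_0=\lceil\sqrt{t_{\rm meet}^Q/t_{\rm mix}^Q}\,\rceil$, the $t_{\rm meet}^Q/k$ terms sum geometrically to $O(t_{\rm meet}^Q)$. The $t_{\rm mix}^Q$ terms contribute $O(t_{\rm mix}^Q\log N)$. The crucial refinement, and the main obstacle, is to avoid paying $t_{\rm mix}^Q$ per round while $k$ is large. When $k\ge\sqrt{t_{\rm meet}^Q/t_{\rm mix}^Q}$, rounds of length
\[
  s_k=\sqrt{t_{\rm mix}^Q t_{\rm meet}^Q}/k
\]
already suffice without mixing. To show this, I would use the lazy spectral bound
\[
  \sum_{s\le t}\sum_y Q^s(x,y)^2\lesssim \frac{t}{t_{\rm meet}^Q}+\sqrt{\frac{t_{\rm mix}^Q}{t_{\rm meet}^Q}}
\]
to lower-bound meeting probability over short windows, from a worst-case start, via the second-moment method on collision counts. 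With these shorter rounds, the whole first phase costs
\[
  O\Bigl(t_{\rm meet}^Q+\sqrt{t_{\rm mix}^Q t_{\rm meet}^Q}\,\log N\Bigr),
\]
since there are $O(\log N)$ halvings, each of length $O(\sqrt{t_{\rm mix}^Q t_{\rm meet}^Q})$ in the worst case.

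\emph{Step 3 (finish).} Once at most $k_0$ particles remain, use the rounds of Step 2 with the mixed estimate. Their costs $t_{\rm mix}^Q+t_{\rm meet}^Q/k$ for $k\le k_0$ sum to $O(t_{\rm meet}^Q+t_{\rm mix}^Q\log k_0)$. Since $t_{\rm mix}^Q\log k_0\le\sqrt{t_{\rm mix}^Q t_{\rm meet}^Q}\log N$ when $k_0\ge1$, this is of the same order as the first phase. Alternatively, from two particles one simply waits time $t_{\rm meet}^Q$.

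Summing the expectations of all rounds, with the geometric tails from Step 1 converting probability-$c$ progress into expected time, gives the stated bound with a universal $C$.
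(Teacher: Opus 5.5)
Your Step~2 and the refinement after it are the core of the argument, and neither holds.

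\textbf{Step~2 (disjoint pairs).} Take a fixed disjoint pairing and rounds of length $t_k\asymp t_{\rm mix}^Q+t_{\rm meet}^Q/k$. A given pair then meets with probability only of order $1/k+t_{\rm mix}^Q/t_{\rm meet}^Q$. So you get $O(1)$ merges per round, not a constant fraction of $k$. The lazy walk on the complete graph $K_n$ makes this concrete. There $t_{\rm mix}^Q=O(1)$ and $t_{\rm meet}^Q\asymp n$, and two particles at distinct vertices collide in one step with probability $O(1/n)$. Hence your $k/2$ pairs produce $O(1)$ expected meetings in $O(n/k)$ steps.

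To lose a constant fraction of particles, each particle must be allowed to meet \emph{any} of the other $k-1$. You then need a second-moment argument over the events $\{i\text{ meets }j\}_{j\ne i}$, which are correlated through particle $i$'s trajectory. Controlling how likely a walk that has just met one particle is to meet another soon afterwards is the source of the $\sqrt{t_{\rm mix}^Qt_{\rm meet}^Q}\log N$ term.

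Note also that Step~2 as stated would already give $O(t_{\rm meet}^Q+t_{\rm mix}^Q\log N)$. For $t_{\rm mix}^Q\le t_{\rm meet}^Q$ that is stronger than the bound you are proving, so your ``refinement'' would be pointless.

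\textbf{The refinement.} It is false as stated, for two reasons.
\begin{itemize}
\item The displayed ``lazy spectral bound'' cannot hold. The $s=0$ term of $\sum_{s\le t}\sum_yQ^s(x,y)^2$ equals $1$, and the $s=1$ term is at least $Q(x,x)^2\ge1/4$. Yet the right side tends to $0$ on $K_n$ whenever $t=o(n)$.
\item Rounds of length $s_k=\sqrt{t_{\rm mix}^Qt_{\rm meet}^Q}/k\asymp\sqrt n/k$ cannot halve $k$ particles on $K_n$. The expected number of merges per step is $O(k^2/n)$, so halving needs order $n/k$ steps.
\end{itemize}
Your phase-one accounting is also inconsistent: summing $s_k$ over the halvings gives $O(t_{\rm mix}^Q)$, not $\sqrt{t_{\rm mix}^Qt_{\rm meet}^Q}\log N$.

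\textbf{What the paper does instead.} The lemma is not a specialization of KMS but an extension. \cite[Theorem~1.1]{kms} concerns lazy simple random walk on an unweighted graph. What is needed is a reduction of a general lazy reversible kernel to that setting, not a re-proof.
\begin{itemize}
\item Write $Q=(I+R)/2$ with $R=2Q-I$. This $R$ is a reversible transition kernel because $Q(x,x)\ge1/2$.
\item Realize $R$ as the walk on a looped network with conductances $c_{xy}=\pi(x)R(x,y)$ for $x\ne y$ and $c_{xx}=\tfrac12\pi(x)R(x,x)$. Its weighted degrees are exactly $\pi(x)$.
\item For rational conductances, clearing denominators gives an unweighted looped multigraph whose half-lazy walk is exactly $Q$. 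The KMS argument uses only that kernel, so it applies.
\item For general conductances, take rational approximations $Q_m\to Q$ with the same support. The absorption-time vectors of the killed two-walk and coalescing chains converge because their fundamental matrices do, and $t_{\rm mix}^{Q_m}\le Ct_{\rm mix}^Q$ for large $m$.
\item Finally, convert KMS's mixing convention, $\max_{x,y}\|Q^t(x,\cdot)-Q^t(y,\cdot)\|_{\TV}$ at threshold $1/e$, to yours at a universal cost.
\end{itemize}
Your proposal contains none of this reduction, and the from-scratch argument offered in its place does not establish the bound.
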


\begin{proof}
For the lazy simple random walk on an unweighted graph this is
Kanade--Mallmann-Trenn--Sauerwald \cite[Theorem~1.1]{kms}. Their mixing time
uses
\[
        \bar d_Q(t):=\max_{x,y}
        \|Q^t(x,\cdot)-Q^t(y,\cdot)\|_{\TV}
\]
with threshold $1/e$, whereas we use
$d_Q(t):=\max_x\|Q^t(x,\cdot)-\pi\|_{\TV}$ with threshold $1/4$. The
inequalities $d_Q(t)\le\bar d_Q(t)\le2d_Q(t)$ and the standard
submultiplicative fixed-threshold comparison show that these conventions differ
by at most universal multiplicative constants.

Put
\[
        R:=2Q-I.
\]
Since $Q(x,x)\ge1/2$, the matrix $R$ is an irreducible transition kernel,
reversible with respect to $\pi$, and $Q=(I+R)/2$. Realize $R$ as the walk on
a weighted looped multigraph with conductances
\[
        c_{xy}:=\pi(x)R(x,y)\quad(x\ne y),\qquad
        c_{xx}:=\frac12\pi(x)R(x,x).
\]
A loop counts twice in the weighted degree. Reversibility makes the
conductances symmetric, and the weighted degree at $x$ is
\[
        \sum_{y\ne x}c_{xy}+2c_{xx}
        =\pi(x)\sum_yR(x,y)=\pi(x).
\]
Thus the base walk of this network has kernel $R$, and its usual
one-half-lazification has kernel exactly $Q$.

If all conductances are rational, multiplication by a common denominator
produces an unweighted looped multigraph with parallel edges. Edge choices are
read with multiplicity, and loops implement the diagonal transitions of $R$.
The trajectory-exposure, meeting, and immortal-process arguments in
\cite{kms} depend only on the resulting half-lazy transition kernel and
therefore apply to this multigraph walk.

For general conductances, choose symmetric rational conductances
$c_{xy}^{(m)}$ with the same support and
$c_{xy}^{(m)}\to c_{xy}$. Let $R_m$ be the corresponding base-walk kernel
and put $Q_m=(I+R_m)/2$. Then $Q_m$ is irreducible and reversible,
$Q_m(x,x)\ge1/2$, and $Q_m\to Q$ entrywise; its stationary law $\pi_m$
converges to $\pi$.

For $\xi\in(0,1)$ write $t_{\rm mix}^Q(\xi)$ for the mixing time with
$d_Q$-threshold $\xi$, and put $s=t_{\rm mix}^Q(1/8)$. Entrywise convergence
of $Q_m^s$ and convergence of $\pi_m$ imply that, for all sufficiently large
$m$,
\[
        t_{\rm mix}^{Q_m}(1/4)\le s
        \le C\,t_{\rm mix}^{Q}(1/4).
\]
The last inequality is the fixed-threshold comparison.

The two-walk process killed on the diagonal and the coalescing process killed
upon reaching one particle are finite absorbing Markov chains. Their transient
matrices converge entrywise. Since their spectral radii are strictly smaller
than one, the corresponding fundamental matrices $(I-K_m)^{-1}$ converge.
Thus the absorption-time vector converges for every initial pair and every
initial coalescing configuration. In particular,
\[
        t_{\rm meet}^{Q_m}\longrightarrow t_{\rm meet}^{Q},
        \qquad
        \E T_{\rm coal}^{Q_m}\longrightarrow \E T_{\rm coal}^{Q}.
\]
Apply the rational-conductance result to $Q_m$ and pass to the limit, using the
preceding mixing-time bound. Enlarging the universal constant proves the
lemma.
\end{proof}

\begin{lemma}[Continuous-time coalescence--meeting comparison]\label{lem:ct-kms}
Let $(X_t)_{t\ge0}$ be a finite irreducible reversible continuous-time chain on
$N\ge2$ states with generator $\mathcal L=P-I$, where $P$ is an irreducible
reversible transition kernel with stationary law $\pi$. Let
$t_{\rm meet}^{\rm ct}$ be
the worst-case expected meeting time of two independent copies, and let
$t_{\rm mix}^{\rm ct}$ be its total-variation mixing time. For the standard
continuous-time coalescing process driven by this chain and started with one
particle at every state,
\begin{equation}\label{eq:kms-continuous}
        \E\coal
        \le C_{\rm KMS}\,t_{\rm meet}^{\rm ct}
        \left(1+\sqrt{\frac{t_{\rm mix}^{\rm ct}}
        {t_{\rm meet}^{\rm ct}}}\log N\right),
\end{equation}
with a universal constant $C_{\rm KMS}$.
\end{lemma}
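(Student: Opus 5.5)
The plan is to reduce the continuous-time statement to the discrete lazy statement of Lemma~\ref{lem:kms-reversible-kernel}, using a time-change that preserves all the relevant quantities up to universal constants. Set $Q:=(I+P)/2$. It is an irreducible, reversible, lazy kernel with stationary law $\pi$. The continuous-time chain with generator $P-I=2(Q-I)$ is then the $Q$-chain observed along a Poisson clock of rate $2$. We must compare this with the synchronous discrete coalescing process, in which all particles move simultaneously at integer times. Continuous time has no such synchrony, so the comparison has to be made through a product chain rather than through a single clock.

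\textbf{Step 1: the coalescing system as a product chain.} Encode the continuous-time coalescing system by the product chain on configurations, in which each particle independently jumps at rate $1$ according to $P$. Let $\mathcal L^{\otimes}$ be the generator of $N$ independent copies, and observe the system at the jumps of a single Poisson clock of rate $N$. At each ring one uniformly chosen particle moves by $P$. Over $\Theta(N)$ rings each particle has made about $\Theta(1)$ moves, so the system resembles $\Theta(1)$ synchronous steps, but only up to concentration losses that could cost $\log N$ factors. Because of this, I would not attempt a direct coupling.

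\textbf{Step 2: apply the KMS argument directly.} Instead, I would reopen the argument of \cite[Theorem~1.1]{kms}. Its ingredients are these:
\begin{itemize}
\item[(a)] a tail bound $\Pbb(\meet>t)\le e^{-\lfloor t/(e\,t_{\rm meet})\rfloor}$, which holds in continuous time by the Markov property;
\item[(b)] mixing for each walk after time $t_{\rm mix}$;
\item[(c)] the halving scheme, in which $k$ particles reduce to $k/2$ within time $O\bigl(t_{\rm meet}/k+\sqrt{t_{\rm mix}t_{\rm meet}/k}\bigr)$, with a union over $\log N$ halving rounds.
\end{itemize}
None of these steps uses synchrony in an essential way; they use only independence of the walks before collision and the Markov property. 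In continuous time, two walks that would have crossed on an edge without meeting cannot pass each other unseen, since jumps occur at distinct times almost surely. So the collision accounting is, if anything, easier than in discrete time.

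\textbf{Step 3: transfer the parameters.} If the halving argument goes through, I would check the parameter dictionary for the embedded lazy discrete chain. The continuous-time meeting time equals, up to constants, the meeting time of two synchronous $Q$-walks. This follows from a Poissonization argument: each continuous unit of time contains $\Theta(1)$ $Q$-steps with exponentially concentrated counts, and laziness lets a synchronous pair emulate independent asynchronous clocks. The same reasoning shows that $t^{\rm ct}_{\rm mix}\asymp t^{Q}_{\rm mix}$, by the Chernoff comparison already used in Lemma~\ref{lem:standard}. Substituting these comparisons into Lemma~\ref{lem:kms-reversible-kernel} then yields \eqref{eq:kms-continuous}.

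\textbf{Main obstacle.} The hard step is the comparison $t^{\rm ct}_{\rm meet}\asymp t^Q_{\rm meet}$, and more precisely the coalescence-time comparison, since the two systems genuinely differ. My first attempt would be a two-sided domination via the variational characterization of Lemma~\ref{lem:stationary-variational} together with Aldous-type bounds. If that fails, I would instead verify the KMS halving argument directly in continuous time as outlined in Step 2.
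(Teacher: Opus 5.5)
\textbf{There is a genuine gap, and it sits in Step~3.} Lemma~\ref{lem:kms-reversible-kernel} bounds $\E T^Q_{\rm coal}$ for the \emph{synchronous} $Q$-coalescent. Plugging meeting- and mixing-time comparisons into it tells you nothing about the continuous-time $\E\coal$ unless you also prove a domination such as $\coal\le c\,T^Q_{\rm coal}$. You identify this as the main obstacle but never resolve it. With your choice $Q=(I+P)/2$ it is genuinely awkward: after $k$ rounds a synchronous $Q$-walk has made a binomial number of $P$-steps, whereas a continuous walk makes a Poisson number. So the synchronous system is not the continuous one observed at any fixed times, and there is no natural coupling of the two coalescents.

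Your fallbacks do not fill this. Lemma~\ref{lem:stationary-variational} only gives \emph{lower} bounds on hitting times. Aldous-type bounds concern two walks, not the $N$-particle coalescent. Re-running the KMS argument in continuous time is asserted rather than checked. Moreover, the halving cost you quote, $O\bigl(t_{\rm meet}/k+\sqrt{t_{\rm mix}t_{\rm meet}/k}\bigr)$ per round, would sum to $O\bigl(t_{\rm meet}+\sqrt{t_{\rm mix}t_{\rm meet}}\bigr)$ with no $\log N$. That is false in general, for example on a star. There the meeting and mixing times are $O(1)$, but $\E\coal$ is of order $\log N$, because leaf particles can be absorbed from the center only at bounded total rate and otherwise must wait for their own clocks.

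\textbf{The missing idea is to discretize with the time-$\delta$ kernel $Q=e^{\delta\mathcal L}$, $\delta\le\log 2$ (say $\delta=1/2$), instead of $(I+P)/2$.} This $Q$ is reversible with respect to $\pi$ and automatically lazy, $Q(x,x)\ge e^{-\delta}\ge1/2$, so Lemma~\ref{lem:kms-reversible-kernel} applies. More importantly, synchronous $Q$-walks are \emph{exactly} continuous-time walks sampled at the times $k\delta$.

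This gives a pathwise coupling. Suppose $S^{\rm ct}_{k\delta}\subseteq S^Q_k$, and let each vertex of $S^Q_k$ issue an independent continuous path segment of length $\delta$. The mesh coalescent keeps the endpoints and merges equal ones. The continuous particles follow the segments issued from their own positions and merge, keeping a priority survivor, when they meet. Survivors have distinct endpoints, so $S^{\rm ct}_{(k+1)\delta}\subseteq S^Q_{k+1}$, and hence $\coal\le\delta T^Q_{\rm coal}$ pathwise.

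Only one direction of each parameter comparison is then needed, and both are elementary.
\begin{itemize}
\item Mixing: $\delta t^Q_{\rm mix}\le t^{\rm ct}_{\rm mix}+\delta$, by monotonicity of total variation.
\item Meeting: $\delta t^Q_{\rm meet}\le e^{2\delta}(t^{\rm ct}_{\rm meet}+\delta)$. Wait for a continuous meeting. With probability at least $e^{-2\delta}$ neither clock rings before the next mesh time, so the sampled paths agree there. Otherwise restart at that mesh time via the strong Markov property.
\end{itemize}
The quantity $x+\sqrt{xy}\log N$ is increasing in $x$ and $y$, and both continuous-time parameters are bounded below by universal constants, so the additive $\delta$'s are absorbed. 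No Poissonization and no two-sided meeting-time comparison is required.
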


\begin{proof}
Fix an observation mesh $\delta\in(0,\log2]$, say $\delta=1/2$, and put
$Q=e^{\delta\mathcal L}$. The kernel $Q$ is reversible with stationary law
$\pi$, and $Q(x,x)\ge e^{-\delta}\ge1/2$ because the rate-one update clock
has no ring during an interval of length $\delta$.

By Lemma~\ref{lem:kms-reversible-kernel}, the synchronous coalescent for $Q$
satisfies
\[
        \E T_{\rm coal}^Q
        \le C\,t_{\rm meet}^Q
        \left(1+\sqrt{\frac{t_{\rm mix}^Q}{t_{\rm meet}^Q}}\log N\right).
\]
Since $Q^k=e^{k\delta\mathcal L}$ and both mixing times use threshold $1/4$,
monotonicity of total variation gives the exact mesh comparison
\begin{equation}\label{eq:mesh-mixing-full}
        t_{\rm mix}^{\rm ct}
        \le\delta t_{\rm mix}^Q
        \le t_{\rm mix}^{\rm ct}+\delta .
\end{equation}

Mesh meetings are a subset of continuous-time meetings, and hence
\begin{equation}\label{eq:mesh-meeting-lower-full}
        t_{\rm meet}^{\rm ct}\le\delta t_{\rm meet}^Q.
\end{equation}
For the reverse bound, let $M$ be the worst expected continuous time until two
sampled paths agree at a mesh time. By the sampling construction,
$M=\delta t_{\rm meet}^Q$. From any starting pair, wait until the two
continuous-time paths first meet; this takes expected time at most
$t_{\rm meet}^{\rm ct}$. Conditional on that meeting, with probability at least
$p=e^{-2\delta}$ neither update clock rings before the next mesh time, so the
sampled paths agree there. If this attempt fails, restart at the next mesh time.
The strong Markov property gives
\[
        M\le t_{\rm meet}^{\rm ct}+\delta+(1-p)M,
\]
and therefore
\begin{equation}\label{eq:mesh-meeting-upper-full}
        \delta t_{\rm meet}^Q
        \le p^{-1}(t_{\rm meet}^{\rm ct}+\delta).
\end{equation}

We next couple the two coalescing systems. Let $S_k^Q$ be the occupied set of
the synchronous $Q$-coalescent after $k$ steps and let $S_t^{\rm ct}$ be the
occupied set of the continuous-time coalescent. Construct them inductively so
that
\[
        S_{k\delta}^{\rm ct}\subseteq S_k^Q\qquad(k\ge0).
\]
The inclusion is equality at time zero. Given it at time $k\delta$, generate,
independently for every vertex of $S_k^Q$, a continuous-time path segment of
length $\delta$ with generator $\mathcal L$. In the mesh process retain only
the endpoints and merge equal endpoints. These endpoints are independent
$Q$-steps, so this is exactly the synchronous $Q$-coalescent.

Particles of the continuous-time process use the same segments at the vertices
of $S_{k\delta}^{\rm ct}$. When two such segments meet, merge the particles
immediately and, according to a fixed priority ordering, retain the trajectory
of the higher-priority survivor. This is the standard graphical construction
of continuous-time coalescing walks. Every particle surviving to
$(k+1)\delta$ follows one of the segments issued from
$S_{k\delta}^{\rm ct}$, and distinct survivors have distinct endpoints. Hence
their occupied set is contained in the set of distinct endpoints of all
segments issued from $S_k^Q$, proving the induction. In particular,
\begin{equation}\label{eq:mesh-coalescence-domination}
        T_{\rm coal}^{\rm ct}\le\delta T_{\rm coal}^Q
        \quad\text{pathwise}.
\end{equation}

Combining the KMS estimate with
\eqref{eq:mesh-mixing-full}, \eqref{eq:mesh-meeting-upper-full}, and
\eqref{eq:mesh-coalescence-domination} gives
\[
 \E T_{\rm coal}^{\rm ct}
 \le C\left(
 t_{\rm meet}^{\rm ct}+\delta+
 \sqrt{(t_{\rm meet}^{\rm ct}+\delta)
       (t_{\rm mix}^{\rm ct}+\delta)}\,\log N\right).
\]
For $N\ge2$, both the worst-case meeting time and the mixing time are bounded
below by positive universal constants under the rate-one normalization.
Absorbing the fixed additive terms proves \eqref{eq:kms-continuous}.
\end{proof}

\begin{lemma}[KMS small-particle lemma in discrete time]
\label{lem:kms-small-discrete}
Let $Q$ be a finite irreducible reversible transition kernel on $N\ge2$ states
with stationary law $\pi$, and assume $Q(x,x)\ge1/2$ for all states. Let
$t_{\rm meet}^Q$ be the worst-case expected meeting time of two independent
synchronous $Q$-walks, and let $t_{\rm mix}^Q$ be the total-variation mixing
time with threshold $1/4$. Put
\[
        \alpha_Q:=\frac{t_{\rm meet}^Q}{t_{\rm mix}^Q}.
\]
There are universal constants $a_0,c_0,C_0\in(0,\infty)$ such that, if
$\alpha_Q\ge a_0$, then the expected coalescence time of the synchronous
coalescing $Q$-walk process started from any initial configuration with at most
$c_0\alpha_Q\log\alpha_Q$ particles is at most $C_0t_{\rm meet}^Q$.
\end{lemma}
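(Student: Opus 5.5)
The natural route is to extract this from the KMS argument \cite{kms}, transferred to reversible half-lazy kernels exactly as in Lemma~\ref{lem:kms-reversible-kernel}. The key input is a meeting-time tail estimate: after one mixing period $t_{\rm mix}^Q$, two walks are close to independent stationary walks, and for stationary starts the meeting time is approximately exponential with mean of order $t_{\rm meet}^Q$. Concretely, I would first show that there are universal constants $c_1,c_2$ with
\[
\Pbb_{x,y}\bigl(\meet\le s\bigr)\ge c_1\frac{s}{t_{\rm meet}^Q}
\qquad\text{for } C t_{\rm mix}^Q\le s\le c_2t_{\rm meet}^Q .
\]
This is the KMS lower bound on the meeting probability within a window. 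It follows from a second-moment (collision-counting) argument: compare the expected number of collisions in a window with the expected number given at least one collision. The latter is at most $O(t_{\rm mix}^Q)$ up to a constant, by laziness and return-probability bounds after mixing.

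Next, with $k$ particles, I would run phases of length $s\asymp t_{\rm mix}^Q\log$-scale and use the KMS ``pairs'' argument. Group the particles into disjoint pairs, and note that during a phase each pair meets with probability at least $c_1 s/t_{\rm meet}^Q$. The pairs are not independent, but a negative-correlation or Markov-inequality argument on the number of non-meeting pairs shows that a constant fraction of pairs coalesce in expectation. This step is valid while $s/t_{\rm meet}^Q\cdot k$ remains large. In the form I would use, the number of particles drops geometrically from $k$ to $k/2$ in time $O(t_{\rm meet}^Q/k\cdot\text{polylog})$. A cleaner approach is the KMS bound that from $k$ particles one reaches $k/2$ within expected time $O(t_{\rm meet}^Q/k+t_{\rm mix}^Q)$ when $k\ge2$; Oliveira's Lemma~\ref{lem:oliveira-hit} gives the analogous statement with $t_{\rm hit}$, and KMS sharpen it to $t_{\rm meet}$ with a $\sqrt{t_{\rm mix}t_{\rm meet}}$ correction.

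Summing over dyadic stages $k,k/2,\dots,1$ gives the total expected time
\[
\sum_j \Bigl(\frac{t_{\rm meet}^Q}{2^{-j}k}\wedge t_{\rm meet}^Q+t_{\rm mix}^Q\Bigr)\lesssim t_{\rm meet}^Q+t_{\rm mix}^Q\log k .
\]
When $k\le c_0\alpha_Q\log\alpha_Q$, we have $\log k\le 2\log\alpha_Q$, so the mixing contribution is $O(t_{\rm mix}^Q\log\alpha_Q)$. This is still not $O(t_{\rm meet}^Q)$ in general, since $t_{\rm mix}\log\alpha=t_{\rm meet}\log\alpha/\alpha$ — but that quantity is at most $t_{\rm meet}$ once $\alpha_Q\ge a_0$. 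The final two-particle stage costs $t_{\rm meet}^Q$ directly. Together these give $O(t_{\rm meet}^Q)$.

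The main obstacle is the halving-stage estimate $O(t_{\rm meet}^Q/k+t_{\rm mix}^Q)$ with universal constants for general reversible lazy kernels: the KMS proof is stated for lazy walks on graphs. I would first try to reduce to their setting by the rational-conductance multigraph realization and the limiting argument already used in Lemma~\ref{lem:kms-reversible-kernel}. Alternatively, I would cite the corresponding small-particle lemma in \cite{kms} directly, checking that their argument uses only the kernel.
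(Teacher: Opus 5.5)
Your self-contained route has a genuine gap at the halving step. Your closing fallback is the part that works, and it is exactly the paper's proof.

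The disjoint-pairs argument uses only the pairwise bound $\Pbb(\meet\le s)\gtrsim s/t_{\rm meet}^Q$. So a phase of length $s$ is guaranteed to remove only a fraction $\asymp s/t_{\rm meet}^Q$ of the particles, not a constant fraction. Each halving then costs order $t_{\rm meet}^Q$, and the dyadic sum is $O(t_{\rm meet}^Q\log\alpha_Q)$, not $O(t_{\rm meet}^Q)$. Gaining the factor $k$ requires a bound of order $\min\{1,ks/t_{\rm meet}^Q\}$ for the probability that a particle meets \emph{some} other particle. That means controlling the correlations among its collisions with many different walks, which is the real content of KMS.

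Your ``cleaner'' estimate, halving $k$ particles in time $O(t_{\rm meet}^Q/k+t_{\rm mix}^Q)$, is not available. Summed over all stages it would give $O(t_{\rm meet}+t_{\rm mix}\log N)$ for every lazy reversible chain, whereas the KMS general bound carries $\sqrt{t_{\rm mix}t_{\rm meet}}\log N$. Heuristically, the estimate fails even inside the lemma's range. Suppose essentially all meetings occur in a small dense hub of stationary mass $h$, entered at rate $\rho$, with visits of duration $d\asymp t_{\rm mix}$. Then $h\asymp\rho d$, $t_{\rm meet}\asymp 1/(\rho h)$ and $\alpha\asymp h^{-2}$. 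With $h^{-1}\ll k\le c_0\alpha\log\alpha$ particles the hub is saturated, so a particle can merge only by entering it. Each halving then takes time $\asymp 1/\rho\asymp t_{\rm mix}\sqrt\alpha=\sqrt{t_{\rm mix}t_{\rm meet}}$. For $k\asymp\alpha$ this exceeds $t_{\rm meet}/k+t_{\rm mix}\asymp t_{\rm mix}$ by a factor $\sqrt\alpha$.

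The lemma still holds in this example, because these $O(\log\alpha)$ stages cost $O(t_{\rm meet}\log\alpha/\sqrt\alpha)$ in total. Likewise, a per-stage bound with the additive term $\sqrt{t_{\rm mix}^Qt_{\rm meet}^Q}$ you allude to would make your dyadic sum $O(t_{\rm meet}^Q)$. But that per-stage estimate is itself the hard part, and nothing in your outline proves it.

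The paper's route, which your last sentence names, runs as follows. Invoke \cite[Lemma~3.7]{kms} for lazy walks on graphs, and note that their pairwise mixing convention agrees with threshold-$1/4$ mixing up to universal factors. Realize a rational-conductance $Q$ as the half-lazification of a looped multigraph walk via $R=2Q-I$, and approximate a general $Q$ by rational kernels $Q_m$.

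The step you leave out is transferring the $Q$-dependent hypotheses through this approximation. Since $\alpha_Q$ enters both the condition $\alpha_Q\ge a_0$ and the particle budget, you need $\alpha_m:=t_{\rm meet}^{Q_m}/t_{\rm mix}^{Q_m}\ge c\,\alpha_Q$ for large $m$. This follows from $t_{\rm meet}^{Q_m}\to t_{\rm meet}^Q$ and $t_{\rm mix}^{Q_m}\le C\,t_{\rm mix}^Q$. Then adjust $a_0,c_0$ so that $\alpha_Q\ge a_0$ forces the KMS hypothesis for $Q_m$ and $c_0\alpha_Q\log\alpha_Q\le c_*\alpha_m\log\alpha_m$. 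Finally, let $m\to\infty$ using convergence of the expected absorption times of the finite coalescing chains from every initial configuration.
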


\begin{proof}
For the lazy walk on an unweighted graph this is
Kanade--Mallmann-Trenn--Sauerwald \cite[Lemma~3.7]{kms}. As in the proof of
Lemma~\ref{lem:kms-reversible-kernel}, their pairwise mixing convention is
universally equivalent to ours.

For rational conductances, put $R=2Q-I$ and apply their proof to the
half-lazification $Q=(I+R)/2$ of the corresponding looped multigraph walk.
For arbitrary conductances, use the rational approximations $Q_m$ constructed
in the proof of Lemma~\ref{lem:kms-reversible-kernel}. Put
\[
        \alpha_m:=\frac{t_{\rm meet}^{Q_m}}{t_{\rm mix}^{Q_m}}.
\]
The meeting-time convergence and the bound
$t_{\rm mix}^{Q_m}\le C t_{\rm mix}^Q$ imply, for all sufficiently large $m$,
that $\alpha_m\ge c\alpha_Q$. Hence, after increasing $a_0$ and decreasing
$c_0$ by universal factors, $\alpha_Q\ge a_0$ implies both that the KMS
hypothesis holds for $Q_m$ and that
\[
        c_0\alpha_Q\log\alpha_Q
        \le c_*\,\alpha_m\log\alpha_m,
\]
where $c_*$ is the particle-count constant in \cite[Lemma~3.7]{kms}.
That lemma therefore applies to $Q_m$ for every initial configuration allowed
in the statement. Convergence of the absorption-time vectors for the finite
coalescing chains, proved in Lemma~\ref{lem:kms-reversible-kernel}, lets
$m\to\infty$ and gives the asserted bound for $Q$. The adjusted constants
$a_0,c_0,C_0$ remain universal.
\end{proof}

\begin{lemma}[Small-particle KMS comparison]\label{lem:ct-kms-small}
Let $(X_t)_{t\ge0}$ be a finite irreducible reversible continuous-time chain
with at least two states and generator $\mathcal L=P-I$, where $P$ is an
irreducible reversible transition kernel. Let
$t_{\rm meet}^{\rm ct}$ be the worst-case expected meeting time of two
independent copies and let $t_{\rm mix}^{\rm ct}$ be its total-variation mixing
time. There are universal constants $a_0,c_0,C_0\in(0,\infty)$ such that the
following holds. Put
\[
        \alpha:=\frac{t_{\rm meet}^{\rm ct}}{t_{\rm mix}^{\rm ct}}.
\]
If $\alpha\ge a_0$, then the expected coalescence time of the process started
from any configuration with at most $c_0\alpha\log\alpha$ particles is at most
$C_0t_{\rm meet}^{\rm ct}$.
\end{lemma}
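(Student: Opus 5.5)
The plan is to transfer Lemma~\ref{lem:kms-small-discrete} to continuous time with the same mesh-sampling argument used in the proof of Lemma~\ref{lem:ct-kms}. Fix the mesh $\delta=1/2$ and put $Q=e^{\delta\mathcal L}$. As shown there, $Q$ is irreducible, reversible with respect to $\pi$, and satisfies $Q(x,x)\ge e^{-\delta}\ge1/2$. The comparisons \eqref{eq:mesh-mixing-full}, \eqref{eq:mesh-meeting-lower-full} and \eqref{eq:mesh-meeting-upper-full} then give
\[
        t_{\rm mix}^{\rm ct}\le\delta t_{\rm mix}^Q\le t_{\rm mix}^{\rm ct}+\delta,
        \qquad
        t_{\rm meet}^{\rm ct}\le\delta t_{\rm meet}^Q\le e^{2\delta}(t_{\rm meet}^{\rm ct}+\delta).
\]
The coupling behind \eqref{eq:mesh-coalescence-domination} never used the initial configuration being all of the state space. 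It applies verbatim from any initial occupied set $S_0$, and gives $T^{\rm ct}_{\rm coal}\le\delta T^Q_{\rm coal}$ pathwise for coalescents started from the same set $S_0$.

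Next I would compare $\alpha$ with $\alpha_Q=t^Q_{\rm meet}/t^Q_{\rm mix}$. Under the rate-one normalization with at least two states, both $t^{\rm ct}_{\rm mix}$ and $t^{\rm ct}_{\rm meet}$ are bounded below by universal positive constants. This is the same fact invoked at the end of the proof of Lemma~\ref{lem:ct-kms}: from a point mass, the chain has not jumped before a time of order one with probability bounded away from zero. Hence the additive $\delta$'s are absorbed and
\[
        \delta t^Q_{\rm mix}\asymp t^{\rm ct}_{\rm mix},\qquad
        \delta t^Q_{\rm meet}\asymp t^{\rm ct}_{\rm meet}
\]
with universal constants. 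Consequently $c_1\alpha\le\alpha_Q\le C_1\alpha$ for universal $c_1,C_1$.

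Choose the new $a_0$ so large that $\alpha\ge a_0$ forces $\alpha_Q\ge c_1\alpha\ge a_0^{\rm disc}$, where $a_0^{\rm disc}$ is the threshold of Lemma~\ref{lem:kms-small-discrete}, and also $\alpha\ge e$. Then $\log\alpha_Q\ge\log(c_1\alpha)\ge\tfrac12\log\alpha$ once $\alpha$ is large enough. Choosing $c_0\le c_0^{\rm disc}c_1/2$ gives
\[
        c_0\alpha\log\alpha\le c_0^{\rm disc}\alpha_Q\log\alpha_Q,
\]
so any configuration with at most $c_0\alpha\log\alpha$ particles is admissible for the discrete lemma. That lemma gives $\E T^Q_{\rm coal}\le C_0^{\rm disc}t^Q_{\rm meet}$. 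Combining with pathwise domination and the meeting comparison,
\[
        \E T^{\rm ct}_{\rm coal}\le\delta C_0^{\rm disc}t^Q_{\rm meet}\le C_0^{\rm disc}e^{2\delta}(t^{\rm ct}_{\rm meet}+\delta)\le C_0t^{\rm ct}_{\rm meet},
\]
again absorbing $\delta$ via the universal lower bound on $t^{\rm ct}_{\rm meet}$.

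The main point to check carefully is the parameter transfer: the discrete hypothesis and particle budget are stated in terms of $\alpha_Q$, so I need two-sided universal comparability of $\alpha_Q$ with $\alpha$. The lower bound $\alpha_Q\gtrsim\alpha$ is what matters for both the threshold and the particle count. It uses only the upper bounds $\delta t^Q_{\rm mix}\le t^{\rm ct}_{\rm mix}+\delta$ and $\delta t^Q_{\rm meet}\ge t^{\rm ct}_{\rm meet}$, together with $t^{\rm ct}_{\rm mix}\gtrsim1$. I would verify that last fact explicitly: for a state $x$, $P^t(x,x)\ge e^{-t}$ and $\pi(x)\le1/2$ for some $x$, so the total-variation distance is at least $e^{-t}-1/2>1/4$ for small $t$. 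I would also reconfirm that the graphical coupling in Lemma~\ref{lem:ct-kms} holds for arbitrary starting sets.
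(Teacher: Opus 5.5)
Your proposal is correct and follows essentially the same route as the paper's proof: the mesh chain $Q=e^{\delta\mathcal L}$ with $\delta=1/2$, the mesh mixing and restart meeting comparisons, absorption of the additive $\delta$ terms via universal lower bounds on $t^{\rm ct}_{\rm mix}$ and $t^{\rm ct}_{\rm meet}$, transfer of the threshold and particle budget to $t^Q_{\rm meet}/t^Q_{\rm mix}$, and the priority coupling started from the same configuration. Your explicit checks (the $e^{-t}-1/2$ bound for the mixing time and the step $\log(c_1\alpha)\ge\frac12\log\alpha$) just fill in details that the paper asserts without writing them out.
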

\begin{proof}
Fix $\delta\in(0,\log2]$, for instance $\delta=1/2$, and put
$Q=e^{\delta\mathcal L}$. Then $Q$ is reversible and
$Q(x,x)\ge e^{-\delta}\ge1/2$. As in
\eqref{eq:mesh-mixing-full},
\begin{equation}\label{eq:mesh-mixing-comparison-small}
        t_{\rm mix}^{\rm ct}
        \le\delta t_{\rm mix}^Q
        \le t_{\rm mix}^{\rm ct}+\delta .
\end{equation}
Mesh meetings are harder than continuous-time meetings, while the restart
argument from the proof of Lemma~\ref{lem:ct-kms} gives the reverse comparison:
\begin{equation}\label{eq:mesh-meeting-comparison-small}
        t_{\rm meet}^{\rm ct}
        \le\delta t_{\rm meet}^Q
        \le C(t_{\rm meet}^{\rm ct}+\delta).
\end{equation}
Since the chain has at least two states, its worst-case meeting and mixing
times have positive universal lower bounds under the rate-one normalization.
Thus the ratios
$t_{\rm meet}^Q/t_{\rm mix}^Q$ and
$t_{\rm meet}^{\rm ct}/t_{\rm mix}^{\rm ct}$ are comparable up to universal
constants.

Let $a_0^{\rm KMS},c_0^{\rm KMS},C_0^{\rm KMS}$ be the constants in
Lemma~\ref{lem:kms-small-discrete}. After increasing $a_0$ and decreasing
$c_0$ if necessary, $\alpha\ge a_0$ implies
\[
        \frac{t_{\rm meet}^Q}{t_{\rm mix}^Q}\ge a_0^{\rm KMS},
        \qquad
        c_0\alpha\log\alpha
        \le c_0^{\rm KMS}
        \frac{t_{\rm meet}^Q}{t_{\rm mix}^Q}
        \log\!\left(\frac{t_{\rm meet}^Q}{t_{\rm mix}^Q}\right).
\]
Apply Lemma~\ref{lem:kms-small-discrete} to the mesh chain $Q$ from any
initial configuration with at most $c_0\alpha\log\alpha$ particles. Apply also
the priority coupling constructed in the proof of Lemma~\ref{lem:ct-kms},
started from this same configuration. It gives
\[
        T_{\rm coal}^{\rm ct}\le\delta T_{\rm coal}^Q
        \quad\text{pathwise}.
\]
Consequently, by \eqref{eq:mesh-meeting-comparison-small},
\[
        \E T_{\rm coal}^{\rm ct}
        \le\delta C_0^{\rm KMS}t_{\rm meet}^Q
        \le C_0t_{\rm meet}^{\rm ct}.
\]
This proves the continuous-time small-particle comparison.
\end{proof}

\begin{proof}[Proof of Corollary~\ref{cor:voter}]
By the classical voter/coalescing-random-walk duality, the consensus time with
all initial opinions distinct has the same law as the full coalescence time of
coalescing random walks started with one particle at every vertex \cite{liggett}.
We first prove a lower bound that is valid for every fixed graph in the three
regimes. Use the Harris graphical construction of the voter model \cite{harris}
and sample $U,V$ independently from the stationary measure $\pi$, independently
of the construction. The opinions at $U$ and $V$ at time $t$ are obtained by
following the two dual coalescing random walks backwards from $(U,t)$ and
$(V,t)$ to time $0$. These two backward lineages evolve as independent copies
until their first meeting. If they have not met by time $t$, then they end at
two distinct initial vertices. Since all initial opinions are distinct, the two
opinions at $U$ and $V$ are different. Therefore
\[
        \Pbb^H(\cons>t)
        \ge
        \Pbb^H_{\pi\otimes\pi}(\meet>t).
\]
Integrating in $t$ gives
\begin{equation}\label{eq:voter-lower-duality}
        \E^H\cons\ge \E^H_{\pi\otimes\pi}\meet .
\end{equation}
The lower bounds in the two supercritical parts of the corollary follow from
Theorem~\ref{thm:main-results}(i)--(ii), and the critical lower bound follows
from Theorem~\ref{thm:main-results}(iii), after decreasing the displayed
constants if necessary.

Next prove the fixed-supercritical upper bound. Let $H=H_\lambda$ and put
\[
        t_{\rm meet}(H)
        =\max_{x,y}\E^H_{x,y}\meet .
\]
Theorem~\ref{thm:main-results}(ii) gives absolute constants $c_*,C_*>0$ such
that $c_*n\le t_{\rm meet}\le C_*n$ with high probability. In the
fixed-supercritical case, $\tmix(H_\lambda)=O_\lambda(\log^2 n)$ by
Benjamini--Kozma--Wormald \cite[Theorem~1.1]{bkw}, and $|V(H_\lambda)|\le n$.
Hence, for every fixed $\lambda>1$ and all sufficiently large $n$,
\[
        \tmix(H_\lambda)\log^2 |V(H_\lambda)|\le n.
\]
Lemma~\ref{lem:ct-kms} gives
\[
        \E^{H_\lambda}\coal\le C_{\rm sup}n
\]
with an absolute $C_{\rm sup}$, after increasing the threshold in $n$ in a way
that may depend on the fixed $\lambda$.

We turn to the slightly supercritical upper bound, where no extra mixing-size
condition is imposed. Let $H=H_{\rm ER}$ and set
\[
        a:=\eps^3n.
\]
Work on the intersection of the high-probability events on which
Theorem~\ref{thm:main-results}(i), Lemma~\ref{lem:standard}, and
Lemma~\ref{lem:slightly-supercritical-hit} hold. On this event,
\begin{equation}\label{eq:slightly-supercritical-voter-inputs}
        c_*n\le t_{\rm meet}(H)\le C_*n,
        \qquad
        \tmix(H)\le C_{\rm mix}n\frac{\log^2 a}{a},
        \qquad
        t_{\rm hit}^{\rm ct}(H)\le C_{\rm hit}n\log a,
\end{equation}
with absolute constants, and $a\to\infty$. Put
\[
        \alpha:=\frac{t_{\rm meet}(H)}{\tmix(H)} .
\]
By \eqref{eq:slightly-supercritical-voter-inputs},
\[
        \alpha\ge c_\alpha\frac{a}{\log^2 a}\to\infty .
\]
Let $c_0,a_0$ be the constants from Lemma~\ref{lem:ct-kms-small}. For all large
$n$, $\alpha\ge a_0$. Define
\[
        k:=\max\left\{1,\left\lfloor\frac{c_0}{2}\alpha\log\alpha\right\rfloor\right\}.
\]
Then $k\le c_0\alpha\log\alpha$, and the preceding lower bound on $\alpha$
implies
\[
        k\ge c_k\frac{a}{\log a}
\]
for all large $n$, with an absolute $c_k>0$. Let $C_k^{\rm coal}$ be the first
time at which the coalescing process on $H$ has at most $k$ particles.
Oliveira's partial-coalescence estimate, Lemma~\ref{lem:oliveira-hit}, gives
\[
        \E^H C_k^{\rm coal}
        \le C_{\rm Oli}\left(\frac{t_{\rm hit}^{\rm ct}(H)}{k}+\tmix(H)\right)
        \le C n\frac{\log^2 a}{a}
        =o(n).
\]
At time $C_k^{\rm coal}$ at most $k\le c_0\alpha\log\alpha$ particles remain.
By the strong Markov property and Lemma~\ref{lem:ct-kms-small}, the expected
additional time to full coalescence is at most $C_0t_{\rm meet}\le C_0C_*n$.
Thus, after enlarging $C_{\rm sup}$ if necessary,
\[
        \E^{H_{\rm ER}}\coal\le C_{\rm sup}n .
\]

It remains to prove the critical-window upper bound. Fix $A_0\ge0$ and
$\eta>0$, and let $H=\mathcal C_1$ be the largest component of $G(n,p_n)$ with
$|n^{1/3}(np_n-1)|\le A_0$. Apply Lemma~\ref{lem:critical-geometry} with
parameter $\eta/2$, and write $B:=B_{A_0,\eta/2}$. On the resulting event,
\[
        |E(H)|\le B n^{2/3},
        \qquad
        \max_{x,y\in V(H)}\Reff^H(x,y)\le Bn^{1/3}.
\]
The continuous-time commute identity gives, for every $x,y\in V(H)$,
\[
        \E_x^H\tau_y+
        \E_y^H\tau_x
        =2|E(H)|\Reff^H(x,y)
        \le 2B^2 n .
\]
Since both terms on the left are nonnegative,
\[
        t_{\rm hit}^{\rm ct}(H):=\max_{x,y}\E_x^H\tau_y
        \le 2B^2 n .
\]
Lemma~\ref{lem:oliveira-hit} yields
\[
        \E^H\coal
        \le 2C_{\rm Oli}B^2 n .
\]
Apply Theorem~\ref{thm:main-results}(iii) separately with error parameter
$\eta/2$ and use \eqref{eq:voter-lower-duality} on that event. Intersecting the
two events, each of failure probability at most $\eta/2$, gives the displayed
critical-window estimate after changing constants. Duality transfers all upper
bounds from full coalescence to voter consensus.
\end{proof}

\section*{Tool and computational resource disclosure}
The authors used AI-assisted tools during the preparation of this manuscript for editorial and expository support, including improving formulations,
organization, and presentation. These tools were not used as authors and did not replace the authors' mathematical judgment. All mathematical statements, proofs, references, and final wording were checked and approved by the authors,
who take full responsibility for the content of the manuscript.

\section*{Acknowledgments}
The research of V.~Koval and P.~Trapman was supported by The Netherlands Organisation for Health Research and Development (ZonMw), grant number: 10710062310027. The research of Y. Peres was supported by National Natural Science Foundation of China grant RFIS-W2531011.


\begin{thebibliography}{99}

\bibitem{aldous-meeting}
D.~Aldous,
\newblock \textit{Meeting times for independent Markov chains},
\newblock Stochastic Processes and their Applications \textbf{38} (1991), no.~2,
185--193.
\newblock \texttt{doi:10.1016/0304-4149(91)90090-Y}.

\bibitem{af}
D.~Aldous and J.~A.~Fill,
\newblock \textit{Reversible Markov Chains and Random Walks on Graphs},
\newblock unfinished monograph, 2002; recompiled version, 2014.
\newblock Available at \texttt{https://www.stat.berkeley.edu/\string~aldous/RWG/book.html}.

\bibitem{avena-capannoli-hazra-quattropani}
L.~Avena, F.~Capannoli, R.~S.~Hazra and M.~Quattropani,
\newblock \textit{Meeting, coalescence and consensus time on random directed graphs},
\newblock Annals of Applied Probability \textbf{34} (2024), no.~5, 4940--4997.
\newblock \texttt{doi:10.1214/24-AAP2087}. arXiv:2308.01832.

\bibitem{bdnp-cover}
M.~T.~Barlow, J.~Ding, A.~Nachmias and Y.~Peres,
\newblock \textit{The evolution of the cover time},
\newblock Combinatorics, Probability and Computing \textbf{20} (2011), no.~3,
331--345.
\newblock \texttt{doi:10.1017/S0963548310000489}. arXiv:1001.0609.

\bibitem{bkw}
I.~Benjamini, G.~Kozma and N.~C.~Wormald,
\newblock \textit{The mixing time of the giant component of a random graph},
\newblock Random Structures \& Algorithms \textbf{45} (2014), no.~3, 383--407.
\newblock \texttt{doi:10.1002/rsa.20539}. arXiv:math/0610459.

\bibitem{cooper-frieze-cover}
C.~Cooper and A.~M.~Frieze,
\newblock \textit{The cover time of the giant component of a random graph},
\newblock Random Structures \& Algorithms \textbf{32} (2008), no.~4, 401--439.
\newblock \texttt{doi:10.1002/rsa.20201}. arXiv:0803.0929.

\bibitem{cooper-frieze-radzik}
C.~Cooper, A.~M.~Frieze and T.~Radzik,
\newblock \textit{Multiple random walks in random regular graphs},
\newblock SIAM Journal on Discrete Mathematics \textbf{23} (2009), no.~4,
1738--1761.
\newblock \texttt{doi:10.1137/080729542}.

\bibitem{cox}
J.~T.~Cox,
\newblock \textit{Coalescing random walks and voter model consensus times on the torus in
$\mathbb Z^d$},
\newblock Annals of Probability \textbf{17} (1989), no.~4, 1333--1366.
\newblock \texttt{doi:10.1214/aop/1176991158}.

\bibitem{dklp}
J.~Ding, J.~H.~Kim, E.~Lubetzky and Y.~Peres,
\newblock \textit{Anatomy of a young giant component in the random graph},
\newblock Random Structures \& Algorithms \textbf{39} (2011), no.~2, 139--178.
\newblock \texttt{doi:10.1002/rsa.20342}. arXiv:0906.1839.

\bibitem{dklp-diameter}
J.~Ding, J.~H.~Kim, E.~Lubetzky and Y.~Peres,
\newblock \textit{Diameters in supercritical random graphs via first passage percolation},
\newblock Combinatorics, Probability and Computing \textbf{19} (2010), no.~5--6,
729--751.
\newblock \texttt{doi:10.1017/S0963548310000301}. arXiv:0906.1840.

\bibitem{ding-lee-peres-cover}
J.~Ding, J.~R.~Lee and Y.~Peres,
\newblock \textit{Cover times, blanket times, and majorizing measures},
\newblock Annals of Mathematics (2) \textbf{175} (2012), no.~3, 1409--1471.
\newblock \texttt{doi:10.4007/annals.2012.175.3.8}. arXiv:1004.4371.

\bibitem{dlp}
J.~Ding, E.~Lubetzky and Y.~Peres,
\newblock \textit{Mixing time of near-critical random graphs},
\newblock Annals of Probability \textbf{40} (2012), no.~3, 979--1008.
\newblock \texttt{doi:10.1214/11-AOP647}. arXiv:0908.3870.

\bibitem{dlp-strict}
J.~Ding, E.~Lubetzky and Y.~Peres,
\newblock \textit{Anatomy of the giant component: The strictly supercritical regime},
\newblock European Journal of Combinatorics \textbf{35} (2014), 155--168.
\newblock \texttt{doi:10.1016/j.ejc.2013.06.004}. arXiv:1202.6112.

\bibitem{vanderhofstad}
R.~van der Hofstad,
\newblock \textit{Random Graphs and Complex Networks. Volume 1},
\newblock Cambridge Series in Statistical and Probabilistic Mathematics, vol.~43,
Cambridge University Press, Cambridge, 2017.
\newblock \texttt{doi:10.1017/9781316779422}.

\bibitem{harris}
T.~E.~Harris,
\newblock \textit{Additive set-valued Markov processes and graphical methods},
\newblock Annals of Probability \textbf{6} (1978), no.~3, 355--378.
\newblock \texttt{doi:10.1214/aop/1176995523}.

\bibitem{kms}
V.~Kanade, F.~Mallmann-Trenn and T.~Sauerwald,
\newblock \textit{On coalescence time in graphs: When is coalescing as fast as meeting?},
\newblock ACM Transactions on Algorithms \textbf{19} (2023), no.~2,
Article~18, 46~pp.
\newblock \texttt{doi:10.1145/3576900}. arXiv:1611.02460.

\bibitem{lpw}
D.~A.~Levin and Y.~Peres,
\newblock \textit{Markov Chains and Mixing Times},
\newblock 2nd ed., with contributions by E.~L.~Wilmer,
American Mathematical Society, Providence, RI, 2017.
\newblock \texttt{doi:10.1090/mbk/107}.

\bibitem{liggett}
T.~M.~Liggett,
\newblock \textit{Stochastic Interacting Systems: Contact, Voter and Exclusion Processes},
\newblock Grundlehren der Mathematischen Wissenschaften, vol.~324,
Springer, Berlin, 1999.
\newblock \texttt{doi:10.1007/978-3-662-03990-8}.

\bibitem{lyons-peres}
R.~Lyons and Y.~Peres,
\newblock \textit{Probability on Trees and Networks},
\newblock Cambridge Series in Statistical and Probabilistic Mathematics, vol.~42,
Cambridge University Press, Cambridge, 2016.
\newblock \texttt{doi:10.1017/9781316672815}.

\bibitem{nachmias-peres-critical}
A.~Nachmias and Y.~Peres,
\newblock \textit{Critical random graphs: Diameter and mixing time},
\newblock Annals of Probability \textbf{36} (2008), no.~4, 1267--1286.
\newblock \texttt{doi:10.1214/07-AOP358}. arXiv:math/0701316.


\bibitem{nash-williams}
C.~St.~J.~A.~Nash-Williams,
\newblock \textit{Random walk and electric currents in networks},
\newblock Proceedings of the Cambridge Philosophical Society \textbf{55} (1959),
181--194.
\newblock \texttt{doi:10.1017/S0305004100033879}.

\bibitem{oliveira-coalescence}
R.~I.~Oliveira,
\newblock \textit{On the coalescence time of reversible random walks},
\newblock Transactions of the American Mathematical Society \textbf{364} (2012),
no.~4, 2109--2128.
\newblock \texttt{doi:10.1090/S0002-9947-2011-05523-6}. arXiv:1009.0664.

\bibitem{oliveira}
R.~I.~Oliveira,
\newblock \textit{Mean field conditions for coalescing random walks},
\newblock Annals of Probability \textbf{41} (2013), no.~5, 3420--3461.
\newblock \texttt{doi:10.1214/12-AOP813}. arXiv:1109.5684.

\bibitem{penrose}
R.~Penrose,
\newblock \textit{A generalized inverse for matrices},
\newblock Proceedings of the Cambridge Philosophical Society \textbf{51} (1955),
406--413.
\newblock \texttt{doi:10.1017/S0305004100030401}.

\bibitem{wang-dubbeldam-vanmieghem}
X.~Wang, J.~L.~A.~Dubbeldam and P.~Van Mieghem,
\newblock \textit{Kemeny's constant and the effective graph resistance},
\newblock Linear Algebra and its Applications \textbf{535} (2017), 231--244.
\newblock \texttt{doi:10.1016/j.laa.2017.09.003}.

\end{thebibliography}
\end{document}